\documentclass[reqno]{amsart}
\usepackage{graphicx}
\usepackage{epsfig}
\usepackage{amssymb}
\usepackage{xcolor}
\usepackage[
backend=biber,
style=alphabetic,
sorting=nyt
]{biblatex}
\usepackage{amsfonts}
\usepackage{dsfont}
\usepackage{amsmath}
\usepackage{stmaryrd}
\usepackage{hyperref}
\usepackage{subfigure, verbatim}
\newtheorem{conjecture}{Conjecture}
\newtheorem{theorem}{Theorem}
\newtheorem*{theorem*}{Theorem}
\newtheorem{lemma}{Lemma}
\newtheorem{proposition}{Proposition}[section]
\theoremstyle{definition}

\newtheorem{corollary}{Corollary}

\theoremstyle{remark}
  
\numberwithin{equation}{section}
\newcommand\numberthis{\addtocounter{equation}{1}\tag{\theequation}}
\theoremstyle{remark}

\newcommand{\Prob}{\mathbb{P}}

\newcommand{\beq}{\begin{equation}}
\newcommand{\eeq}{\end{equation}}
\newcommand{\beqn}{\begin{equation*}}
\newcommand{\eeqn}{\end{equation*}}
\newcommand{\bea}{\begin{eqnarray}}
\newcommand{\eea}{\end{eqnarray}}
\newcommand{\bean}{\begin{eqnarray*}}
\newcommand{\eean}{\end{eqnarray*}}

\newcommand{\be}{\begin{enumerate}}
\newcommand{\ee}{\end{enumerate}}
\newcommand{\bi}{\begin{itemize}}
\newcommand{\ei}{\end{itemize}}
\newcommand{\bd}{\begin{description}}
\newcommand{\ed}{\end{description}}

\begin{document}
\title[Critical partition function in the $q$-aspect]{On the $\beta=2$ Partition function for Dirichlet $L$-functions in the $q$-aspect}
\author{Christopher Atherfold}
\address{School of Mathematics, University of Bristol, Bristol, BS8 1UG, United Kingdom}
\email{christopher.atherfold@bristol.ac.uk}
\begin{abstract}
    We study the $\beta=2$ partition function $\int_{|h| \leq \log^{\theta}(q)/2}|L(1/2+ih,\chi)|^2dh$ for typical Dirichlet characters modulo a large prime $q$ and $\theta \in (-1/2,0]$ motivated by a $q$-analogue of the Saksman--Webb conjectures. When $\theta <0$, we use Harper's randomisation argument to introduce explicit conditioning to recover moment upper bounds consistent with critical normalisation predicted there. As an application, we prove that for $q(1-o(1))$ Dirichlet characters modulo $q$, $\max_{|h| \leq 1/2}|L(1/2+ih,\chi)|\ll \frac{\log(q)}{(\log\log(q))^{3/4+o(1)}}$, establishing an upper bound matching the predictions of the $q$-analogue of the Fyodorov--Hiary--Keating conjectures up to second order.
\end{abstract}
\maketitle
\section{Introduction} \subsection{Motivation and results}
We study how large $L$-functions can be for typical Dirichlet characters modulo a large prime $q$ near the central point through the lens of the $\beta=2$ partition function. This is given by
\begin{equation}
    \mathcal{Z}_2(q,\chi,\theta)=\int_{|h| \leq \log^{\theta}(q)/2}|L(1/2+ih,\chi)|^2dh
\end{equation}
where $\theta \in (-1/2,0]$. \par
Two conjectures which have shaped recent study of the Riemann zeta function are the Fyodorov--Hiary--Keating and Saksman--Webb conjectures, both of which we will discuss in this paper. \par
We begin with the Saksman--Webb conjectures. In \cite{sw20}, Saksman and Webb investigate many distributional questions concerning random Euler products which they conjecture should also hold for the Riemann zeta function on typical intervals. Among their conjectures is 
\begin{equation} \label{sw_zeta}
    \frac{\sqrt{\log\log(T)}}{\log(T)}\int_0^1|\zeta(1/2+i(\tau+h)|^2dh \underset{T \rightarrow \infty}{\longrightarrow} \int_0^1g(h)\lambda_2(dh)
\end{equation}
in distribution, where $\tau$ is a uniformly distributed random variable on the interval $[T,2T]$ and $\lambda_2(dh)$ is the critical Gaussian multiplicative chaos measure. The function $g$ is a generalised random function which is well behaved in various senses, and for more details one can see the theorem statements in \cite{sw20}. This conjecture will be settled in forthcoming work of Harper, Saksman and Webb. It is natural to ask if the analogue of such a conjecture holds for unitary averages over Dirichlet characters. We present the $q$-analogue of Conjecture $1.11$ in \cite{sw20}.
\begin{conjecture} \label{q-aspect_partition_conjecture}
Let $q$ be a large prime and $\chi_q$ to be a non-trivial Dirichlet character modulo $q$ chosen uniformly at random. Then
    $$\frac{\sqrt{\log\log(q)}}{\log(q)}\int_0^1|L(1/2+ih,\chi_q)|^2dh \underset{q \rightarrow \infty}{\longrightarrow} \int_0^1g(h)\lambda_2(dh),$$
    where $g(h)$ and $\lambda_2(dh)$ are the same as identified in equation (\ref{sw_zeta}).
\end{conjecture}
As mentioned in \cite{harper2019partition}, this can be described as the $\beta=2$ partition function of $\log(|L(1/2+ih,\chi)|)$ for fixed $\chi$ and varying $h$. Two striking features of the conjecture are the normalisation factor and the non-Gaussian limit. Before determining the limiting distribution, we need to have the correct normalisation. Our first result provides evidence towards this being the correct normalisation factor, together with the work of Harper in \cite{harper2023typicalsizecharacterzeta} and the recent work of Vihko \cite{vihko2025dirichlet} where Vihko proves that a uniformly random Dirichlet $L$-function on the critical line converges as a random Schwartz distribution to the Steinhaus random Euler product as $q \to \infty$ is a positive integer. Our proofs use the randomisation argument developed by Harper in \cite{harper2019partition} and \cite{harper2023typicalsizecharacterzeta} to exploit the connection between the deterministic quantities and statements involving Steinhaus random multiplicative functions. The results in \cite{harper2023typicalsizecharacterzeta} are particularly revealing, since Harper proves that for $q$ a large prime and $x=o(q)$ then
$$\frac{1}{q-1}\sum_{\chi \bmod{q}}\big|\sum_{n \leq x}\chi(n)\big| = o(\sqrt{x}).$$
In the setting of Conjecture \ref{q-aspect_partition_conjecture}, we obtain the following theorem.
\begin{theorem}
Suppose $q$ is a large prime. Then uniformly for $r \in [0,1]$,
    $$\frac{1}{q-1}\sum_{\chi \bmod{q}}(\mathcal{Z}_2(q,\chi,0))^r \ll \big(\frac{\log(q)}{1+(1-r)\sqrt{\log\log(q)}}\big)^r.$$
    \label{pf_moments_constant}
\end{theorem}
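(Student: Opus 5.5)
Our plan is to follow Harper's method from \cite{harper2023typicalsizecharacterzeta} and \cite{harper2019partition}. First we reduce $\mathcal{Z}_2(q,\chi,0)$ to a short Euler-product-type object, then transfer the character average to a Steinhaus random model, and finally apply the critical multiplicative chaos estimates.

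\textbf{Step 1: the trivial range.} When $1-r \ll 1/\sqrt{\log\log(q)}$ the claimed bound is just $\log^r(q)$. This follows from H\"older's inequality and the classical second moment
$$\frac{1}{q-1}\sum_{\chi}\int_{|h|\le 1/2}|L(1/2+ih,\chi)|^2dh \ll \log q.$$
So we may assume $1-r \geq 1/\sqrt{\log\log q}$, and the target becomes $(\log q/((1-r)\sqrt{\log\log q}))^r$.

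\textbf{Step 2: approximate and factorise.} By an approximate functional equation (or a mollified Dirichlet polynomial), we write $L(1/2+ih,\chi)$ as a Dirichlet polynomial of length $q^{1+o(1)}$. By the large sieve and the second moment, the contribution from $n$ with a large prime factor can be controlled in mean square, so it is harmless after applying H\"older with $r\le 1$.

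The main term is then factored as $F(1/2+ih,\chi)$ times a tail. Here $F(s,\chi)=\prod_{p\le x}(1-\chi(p)p^{-s})^{-1}$ with $x=q^{c}$ for a small $c$, and the tail is handled in mean square. More precisely, following Harper, we bound
$$\mathcal{Z}_2 \ll \int_{|h|\le 1/2}|F(1/2+ih,\chi)|^2\,\Big|\sum_{\substack{n\le q\\ p|n\Rightarrow p>x}}\frac{\chi(n)}{n^{1/2+ih}}\Big|^2dh$$
plus acceptable errors. Conditioning on the small primes and taking the expectation over the large-prime part reduces this essentially to $\log(q)\cdot \int |F(1/2+ih,\chi)|^2 dh/\log x$, up to constants.

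\textbf{Step 3: pass to the random model.} To handle the $r$-th power of the product-over-small-primes integral, we expand $|F|^2$ further into factors over dyadic-in-$\log\log$ ranges. We compute the moments of these factors. Since $x$ is a small power of $q$, the orthogonality of characters matches the moments with those of the Steinhaus model $\prod_{p\le x}(1-f(p)p^{-1/2-ih})^{-1}$. Any expression that is a positive integer moment of short Dirichlet polynomials has the same average over $\chi$ as in the random model, up to negligible error.

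To apply this with $r<1$, we use Harper's device. We split according to a ``good event'' on which all partial products satisfy the barrier bound
$$|F_k(1/2+ih)| \le \frac{\log x_k}{\log\log x_k}\,e^{\text{slack}},$$
together with the complementary events. Each of these is expressed through high even moments of Dirichlet polynomials over short prime ranges. On each event the $r$-th power is bounded via H\"older in terms of first moments restricted to the event.

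\textbf{Step 4: critical chaos bound (main obstacle).} In the random model, Harper's result from \cite{harper2019partition} gives
$$\mathbb{E}\Big(\int_{-1/2}^{1/2}|F(1/2+ih)|^2dh\Big)^r \ll \Big(\frac{\log x}{1+(1-r)\sqrt{\log\log x}}\Big)^r.$$
This comes from the ballot/random-walk barrier estimate: the random walk $\log|F_k|-\log\log x_k$ stays below a linear barrier with probability $\asymp \text{(slack)}/\sqrt{\log\log x}$. The key difficulty is reproducing this barrier conditioning in the character setting, where only bounded-length moments are computable. We would follow \cite{harper2023typicalsizecharacterzeta}: the barrier events are checked on a discrete net of $h$ spaced by $1/\log x_k$ at each scale, using Sobolev/Gallagher-type arguments, and their failure probabilities are bounded by Markov inequalities with moments of length $\le q^{c}$. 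Combining with Step 2, the $\log q$ versus $\log x$ discrepancy is only a constant since $x=q^c$. This yields the theorem uniformly in $r\in[0,1]$, the $r$-dependence entering only through the slack optimisation in Step 4.
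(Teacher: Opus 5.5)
Your overall architecture (approximate functional equation, smooth part times rough part, a barrier event on the Euler-product increments, Harper's randomisation, the ballot estimate, separate treatment of exceptional characters) is the paper's. Two ingredients, however, fail as you specify them.

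The first is the choice of parameters. Orthogonality mod $q$ reproduces Steinhaus moments only for polynomials of total length below $q$. So a sum over primes $p\le x=q^{c}$ can be raised only to a bounded power $<1/c$, and your Step~3 claim that integer moments match is false for the moments the barrier requires:
\begin{itemize}
\item Bounding the failure probability by Markov's inequality uses moments of order $\asymp\log\log x$ at the top levels, so $x^{O(\log\log x)}<q$ is needed.
\item Smoothing the indicator of $Y\asymp\log\log x$ simultaneous conditions at precision $\delta$ and randomising (Lemma~\ref{randomisation}) needs $\sqrt{q}\,P^{400(Y/\delta)^2\log(N\log P)}<q$.
\end{itemize}
A representation of length $q^{1+o(1)}$, a rough tail over $n\le q$, and the untruncated product $F(s,\chi)$ (an infinite Dirichlet series) are likewise beyond orthogonality. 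The paper instead uses the approximate functional equation with length $\sqrt{q/2\pi}$, truncates the smooth part at $q^{\varepsilon}$ and the rough part at $q^{1/2-2\varepsilon}$, and takes $P=q^{1/(\log\log q)^8}$. It compares with genuine Euler products only on the random side. Nothing is lost by this. As your own Step~2 shows, the rough part's mean square $\asymp\log q/\log P$ exactly compensates $\mathbb{E}|F|^2\asymp\log P$, and $\log\log P\sim\log\log q$. So you never needed $\log x\asymp\log q$.

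The second is the pair of events. Typically $\max_{|h|\le 1/2}|F_k(1/2+ih)|\approx\log x_k/(\log\log x_k)^{3/4}$. Your barrier $\frac{\log x_k}{\log\log x_k}e^{\mathrm{slack}}$, with an $r$-dependent constant slack, therefore fails with probability tending to $1$. The paper, following Harper, uses $\log x_j\,(\log\log x_j)^2e^{C\min\{\sqrt{\log\log P},1/(1-r)\}}$, whose failure probability is $\ll e^{-2C\min\{\sqrt{\log\log P},1/(1-r)\}}$ by a union bound over the net (Proposition~\ref{bad_chaining_moment}).

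On the failure event, one H\"older step against the first moment gives only $\mathbb{P}(\mathrm{fail})^{1-r}(\log q)^r\ll e^{-2C}(\log q)^r$. This misses the target by a factor $((1-r)\sqrt{\log\log q})^r$. Restricting the first moment to the failure event does not help: at $\beta=2$ the walk tilted by $|F(h)|^2$ has drift equal to the barrier slope, so it crosses the barrier with probability $1-O\big(\frac{C}{(1-r)\sqrt{\log\log q}}\big)$.

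The paper closes this with a recursion. H\"older with exponent $r'=(1+r)/2$ gives $R(\alpha)\ll C+e^{-C/2}R(\alpha/2)$, iterated down to $\alpha=1/\sqrt{\log\log q}$, where your Step~1 applies. Alternatively, you could split the failure event dyadically according to the slack actually needed, and apply the barrier first-moment bound with the doubled slack on each piece. If that is what your ``complementary events'' mean, it has to be set up explicitly.
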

We also study the $\beta=2$ partition function on mesoscopic (shrinking) intervals of size $\log^{\theta}(q)$ for $\theta \in (-1/2,0]$. This introduces a new difficulty, where we need to introduce explicit conditioning on the contribution of $P_\theta = \exp(\log^{|\theta|}(q))$ smooth integers. We have for $P_\theta$ smooth integers, $n^{ih}$ is essentially frozen up to $O(1)$ fluctuations. Consequently, we need to condition on the value of the Dirichlet polynomial over the $P_\theta$-smooth numbers to obtain the correct order of magnitude. This has previously been studied in Arguin--Hamdan \cite{ah24} and before that in \cite{aor21}. Our approach to this is rather different. Previously, the authors used level sets of $\log(|\zeta|)$ to derive their results, whereas we are integrating $L$ directly, which necessitates the explicit conditioning. \par
In \cite{harper2023typicalsizecharacterzeta}, Harper can implement the \textit{coarser conditioning} implicitly, whereas we must do this explicitly. We achieve this by reformulating the multipoint implicit conditioning there into an (explicit) one point conditioning statement using a chaining argument. Suppose $f(n)$ be a Steinhaus random multiplicative function, so define the sequence $(f(p))$ to be independent identically distributed random variables on the complex unit circle and extended completely multiplicatively to the natural numbers. Take
\begin{equation} \label{short_lognormal}
        Y_x(h) := \Re \big(\sum_{p \leq x}\big(\frac{f(p)}{p^{1/2+ih}}+\frac{(f(p))^2}{p^{1+2ih}}\big)\big),
\end{equation}
and throughout the paper, we assume $W$ is some real number.
\begin{theorem}\label{pf_moments_meso}
    Fix $\theta \in (-1/2,0]$ and let $P_\theta = \exp(\log^{|\theta|}(q))$. Then, uniformly for $q$ a large prime and $r \in [0,1]$, if $\theta <0$
    \begin{equation*}
        \frac{1}{q-1}\sum_{\chi \bmod{q}}\big[(\mathcal{Z}_2(q,\chi,\theta))^r\big|Y_{P_\theta}(0) \in [W,W+1]\big] \ll \big(\frac{e^{2W}\log^{1+2\theta}(q)}{1+(1-r)\sqrt{\log\log(q)}}\big)^r.
    \end{equation*}
    We also have for $W \in \mathbb{R}$
    \begin{equation*}
        \frac{1}{q-1}\sum_{\chi \bmod{q}}\big[(\mathcal{Z}_2(q,\chi,\theta)^r\big|Y_{P_\theta}(0) \leq W\big] \ll \big(\frac{e^{2W}\log^{1+2\theta}(q)}{1+(1-r)\sqrt{\log\log(q)}}\big)^r.
    \end{equation*}
\end{theorem}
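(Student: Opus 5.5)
We follow Harper's randomisation strategy from \cite{harper2019partition, harper2023typicalsizecharacterzeta}, with an explicit conditioning step added. The aim is to reduce the conditional character average to a conditional expectation for the Steinhaus random multiplicative function $f$, and then apply the chaining and Girsanov-type arguments that give the critical $\sqrt{\log\log q}$ saving.

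\textbf{Step 1: approximate and randomise.} For each $|h|\le \log^\theta(q)/2$, replace $L(1/2+ih,\chi)$ by a short Dirichlet polynomial, or more precisely by a smoothed Euler product over primes up to $x=q^{c}$ times a mollified remainder, using the approximate functional equation and a mean-square bound. Since $r\le1$, Hölder's inequality removes the error terms. Write $\mathcal{Z}_2=\int|F_{\le P_\theta}(h)|^2|F_{>P_\theta}(h)|^2\,dh$, where $F_{\le P_\theta}$ is the Euler product over $p\le P_\theta$ and $F_{>P_\theta}$ is the product over $P_\theta<p\le x$.

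For $|h|\le\log^\theta(q)$ and $p\le P_\theta=\exp(\log^{|\theta|}q)$ we have $|h\log p|\ll1$. So $|F_{\le P_\theta}(h)|^2\asymp e^{2Y_{P_\theta}(0)}$ up to a factor $e^{O(\Delta)}$, where $\Delta=\sup_{|h|\le \log^\theta q}|Y_{P_\theta}(h)-Y_{P_\theta}(0)|$. I would prove this with a one-dimensional chaining argument over dyadic scales in $h$: increments have variance $\ll h^2\sum_{p\le P_\theta}\log^2p/p\ll1$, so $\Delta$ has Gaussian tails even conditionally on $Y_{P_\theta}(0)$. This is the reformulation of the multipoint conditioning as a one-point condition. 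On the event $Y_{P_\theta}(0)\in[W,W+1]$, this step produces the factor $e^{2W}$.

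Next, the primes in $(P_\theta,x]$ are independent of the condition. By orthogonality of characters, the average over $\chi$ of products of bounded-length polynomials matches the Steinhaus expectation, since $x^{O(1)}<q$ when $c$ is small. Conditioning on $Y_{P_\theta}(0)$ is handled by expanding the indicator of the interval through a smooth approximation and a truncated Fourier or Taylor expansion in the small-prime polynomial, which keeps the lengths below $q$. This reduces the problem to bounding
$$\mathbb{E}\Big[\Big(\int_{|h|\le\log^\theta q/2}|F_{(P_\theta,x]}(h)|^2dh\Big)^r\Big].$$

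\textbf{Step 2: the critical bound for the large primes.} The expected value of the inner integral is $\asymp \log^\theta(q)\cdot \log x/\log P_\theta\asymp\log^{1+2\theta}(q)$, which is the trivial $r=1$ bound. To get the extra $(1+(1-r)\sqrt{\log\log q})^{-r}$, I would follow Harper's argument in \cite{harper2019partition}. After rescaling $h$ by $\log^{|\theta|}q$, the process on $(P_\theta,x]$ is a log-correlated field across about $(1+\theta)\log\log q$ scales on a unit interval. Introduce the barrier event $\mathcal{G}$ that the partial sums $\sum_{P_\theta<p\le x_j}$ stay below $\log\log x_j-\log\log P_\theta+C$ for all $j$. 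On $\mathcal{G}$, bound the expectation by the first moment; the Girsanov shift plus a ballot estimate gives a factor $\ll C/\sqrt{\log\log q}$. Off $\mathcal{G}$, Hölder's inequality with exponent $r$ and a union bound over the first violating scale give the factor $e^{-(1-r)C}$. Optimising $C$ yields the stated bound. Since $1+2\theta>0$ and $\theta$ is fixed, $\sqrt{(1+\theta)\log\log q}\asymp\sqrt{\log\log q}$.

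\textbf{Step 3: the second statement.} For the condition $Y_{P_\theta}(0)\le W$, decompose into the events $Y_{P_\theta}(0)\in[W-k-1,W-k]$ for $k\ge0$. Apply the first statement to each piece, weighted by the conditional probability of that piece. Since $e^{2r(W-k)}$ is summable in $k$ (for $r$ near $0$ the bound is trivial anyway, since the right-hand side is then $\gg1$), the sum gives the claimed bound.

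\textbf{Main obstacle.} The hard step is making the conditioning compatible with the character-sum randomisation. This requires two things: the smoothed indicator of $Y_{P_\theta}(0)\in[W,W+1]$ must be expressible through polynomials short enough relative to $q$, and the chaining bound for $\Delta$ must hold conditionally and uniformly in $W$. I would first try a Beurling--Selberg majorant of the indicator with bandwidth $O(1)$, truncating the resulting exponential series at degree about $\log\log q$.
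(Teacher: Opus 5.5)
Your overall architecture is the same as the paper's: freeze the $P_\theta$-smooth part with a one-point chaining bound to produce $e^{2W}$, run a barrier/ballot argument on the remaining primes, and split $(-\infty,W]$ into unit intervals for the second statement. However, the off-$\mathcal{G}$ step in Step~2 does not give the stated bound. With a barrier at height $C$ above the drift, the ballot estimate bounds the contribution on $\mathcal{G}$ by $\ll (C\log^{1+2\theta}(q)/\sqrt{\log\log q})^r$. A single H\"older step against the first moment bounds the contribution off $\mathcal{G}$ by $\mathbb{P}(\mathcal{G}\text{ fails})^{1-r}(\log^{1+2\theta}(q))^r\approx e^{-2(1-r)C}(\log^{1+2\theta}(q))^r$. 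Making the second term $\ll ((1-r)\sqrt{\log\log q})^{-r}(\log^{1+2\theta}(q))^r$ forces $C\gg \log((1-r)\sqrt{\log\log q})/(1-r)$, while the first term needs $C\ll 1/(1-r)$. So optimising $C$ loses a factor $\log(2+(1-r)\sqrt{\log\log q})$; for fixed $r=2/3$ this is already $(\log\log\log q)^{2/3}$. The missing idea is the iterated H\"older argument the paper takes from Harper. Take the barrier offset to be $C\min\{\sqrt{\log\log q},1/(1-r)\}$ with $C$ a fixed large constant, so that $\mathbb{P}_q(\mathcal{G}_\chi\text{ fails})\ll e^{-2C\min\{\sqrt{\log\log q},1/(1-r)\}}$ (Proposition~\ref{bad_chaining_moment}). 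On the failure event, apply H\"older with exponents $r'/(r'-r)$ and $r'/r$, where $r'=(1+r)/2$, instead of comparing with the first moment. This bounds the bad part by $e^{-C/2}$ times the same normalised quantity at exponent $r'$, giving $R(\alpha)\ll Ce^{2W}+e^{-C/2}R(\alpha/2)$. You iterate this down to $1-r\asymp 1/\sqrt{\log\log q}$, where plain H\"older against the first moment is already sharp.

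There is also a gap in how you pass to the random model. Step~1 claims that orthogonality reduces the character average of $(\int\cdots)^r$ to $\mathbb{E}[(\int|F_{(P_\theta,x]}(h)|^2dh)^r]$. Orthogonality only identifies averages of polynomial expressions in $\chi$ of total length below $q$. A fractional power of an integral is not of that form, so Step~2 cannot simply be run on the random side. In the paper, every H\"older and barrier split is done on the character side, and the failure probability of $\mathcal{G}_\chi$ is estimated there by high moments (Lemma~\ref{even_moment}). The only quantities transferred are, for fixed $h$, expressions $\mathbb{E}^{\text{char}}[\prod_i g_{j(i)}(\cdots)\,|\sum_n c(n)\chi(n)n^{-1/2}|^2]$, handled by Lemmas~\ref{partition_result} and~\ref{randomisation}. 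The hypothesis $xP^{400(Y/\delta)^2\log(N\log P)}<q$ of Lemma~\ref{randomisation} has to hold with $Y\asymp\log\log q$ smoothed barrier conditions and $\delta\asymp 1/\log\log q$ (so that $(1+\delta)^{Y}\ll 1$). This confines the barrier primes to $p\le P=q^{1/(\log\log q)^{O(1)}}$ (the paper takes $P=q^{1/(\log\log q)^8}$), not $p\le q^{c}$; your own truncation at degree $\log\log q$ is incompatible with $x=q^c$. That is why the paper factors $\sum_{n\le\sqrt{q/2\pi}}\chi(n)n^{-s}$ into three Dirichlet polynomials (Lemmas~\ref{factored_form} and~\ref{logcor_factored_form}):
\begin{itemize}
\item a $P_\theta$-smooth part;
\item a part supported on primes in $(P_\theta,P]$;
\item a polynomial over $P$-rough $n\le q^{1/2-2\varepsilon}$.
\end{itemize}
The last factor is never replaced by an Euler product, because the $n\le\sqrt q$ with a prime factor above $q^c$ carry a positive proportion of the mean square. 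It enters only through its mean square, in the $|\sum_n c(n)\chi(n)n^{-1/2}|^2$ slot.
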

The second estimate immediately follows from the first splitting the interval $(-\infty,W]$ into subintervals of unit length and applying the first statement in Theorem \ref{pf_moments_meso} to each subinterval. The conditioning in the second statement is much closer to a barrier event on the frozen field, which are common in the context of log-correlated fields and branching random walks. As a corollary of Theorems \ref{pf_moments_constant} and \ref{pf_moments_meso}, we obtain the following tail bounds.
\begin{corollary} \label{typical_partition_size}
    \textit{Fix $\theta \in (-1/2,0)$. Then, uniformly for a large prime $q$ and $\lambda \geq 2$ if $\theta <0$}
     \begin{align*}
        \frac{1}{q-1}\#\big\{\chi \bmod{q}:\mathcal{Z}_2(q,\chi,\theta) > \frac{\lambda e^{2W} \log^{1+2\theta}(q)}{\sqrt{\log\log(q)}} \big| Y_{P_\theta}(0) \leq W\big\} 
        \ll \frac{\min\{\log(\lambda),\sqrt{\log\log(q)}\}}{\lambda}
    \end{align*}
    \textit{For $\theta=0$, we have}
    \begin{align*}
        \frac{1}{q-1}\#\big\{\chi \bmod{q}:\mathcal{Z}_2(q,\chi,\theta) > \frac{\lambda \log(q)}{\sqrt{\log\log(q)}}\big\} 
        \ll \frac{\min\{\log(\lambda),\sqrt{\log\log(q)}\}}{\lambda}.
    \end{align*}
\end{corollary}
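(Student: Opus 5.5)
The plan is to apply Markov's inequality to the $r$-th moment, with $r$ chosen according to $\lambda$, and then optimise over $r$. Write $N = \sqrt{\log\log(q)}$. In the case $\theta<0$, let $M = e^{2W}\log^{1+2\theta}(q)$; in the case $\theta = 0$, let $M=\log(q)$. For any $r\in[0,1]$, Markov's inequality applied to $\mathcal{Z}_2^r$ under the conditional average gives
\begin{equation*}
\frac{1}{q-1}\#\Big\{\chi: \mathcal{Z}_2 > \frac{\lambda M}{N}\Big\} \le \Big(\frac{N}{\lambda M}\Big)^r \cdot \frac{1}{q-1}\sum_{\chi \bmod{q}} \mathcal{Z}_2^r .
\end{equation*}
For $\theta<0$ we bound the average using the second statement of Theorem \ref{pf_moments_meso}, conditioned on $Y_{P_\theta}(0)\le W$. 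For $\theta=0$ we use Theorem \ref{pf_moments_constant}. In both cases the right-hand side is
\begin{equation*}
\ll \Big(\frac{N}{\lambda M}\Big)^r \Big(\frac{M}{1+(1-r)N}\Big)^r = \Big(\frac{1}{\lambda}\cdot \frac{N}{1+(1-r)N}\Big)^r .
\end{equation*}
The implied constant is uniform in $r\in[0,1]$, so we are free to choose $r$ depending on $\lambda$ and $q$.

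Next we choose $r$. If $\log\lambda \ge N$, take $r=1$. The bound becomes $N/\lambda$, which equals $\min\{\log\lambda, N\}/\lambda$ in this range. If $2\le \lambda$ and $\log \lambda < N$, take $r = 1 - 1/\log\lambda$; this lies in $[0,1)$ because $\log\lambda\ge\log 2$ (for $\lambda$ close to $2$ we can instead take $r=0$, giving the trivial bound $\ll 1 \ll \log\lambda/\lambda$). With this choice $(1-r)N = N/\log\lambda$, so
\begin{equation*}
\frac{N}{1+(1-r)N} \le \frac{N}{N/\log\lambda} = \log\lambda .
\end{equation*}
The bound is therefore at most $(\log\lambda/\lambda)^{r}$. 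We compute
\begin{equation*}
\Big(\frac{\log\lambda}{\lambda}\Big)^{r} = \frac{\log\lambda}{\lambda}\cdot \Big(\frac{\lambda}{\log\lambda}\Big)^{1/\log\lambda},
\end{equation*}
and the last factor is at most $\lambda^{1/\log\lambda}=e$. This gives $\ll \log\lambda/\lambda = \min\{\log\lambda,N\}/\lambda$.

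There is no substantive obstacle, since the corollary is a direct consequence of the uniformity in $r$. The one point needing care is reading the conditional statement correctly. For $\theta<0$, Markov's inequality is applied to the normalised conditional counting measure, i.e. the proportion among characters with $Y_{P_\theta}(0)\le W$. That is exactly how the conditional averages in Theorem \ref{pf_moments_meso} are normalised, so Markov applies verbatim.
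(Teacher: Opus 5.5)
Your proposal is correct and is essentially the paper's argument. The paper applies Markov's inequality at $r=1$ when $\log(\lambda)\geq\sqrt{\log\log(q)}$, and otherwise at $r=1-c/\log(\lambda)$, using the uniformity in $r$ of Theorems \ref{pf_moments_constant} and \ref{pf_moments_meso}; it takes $c=2$ where you take $c=1$, which only changes constants. One small slip: $1-1/\log(\lambda)$ is negative rather than in $[0,1)$ for $2\leq\lambda<e$, but your fallback $r=0$ covers exactly that range, so nothing is lost.
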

\begin{proof}
    When $\lambda \geq \exp(\sqrt{\log\log(q)})$, then this is an application of Markov's inequality when $r=1$ with the moment bound obtained in Theorems \ref{pf_moments_constant} and \ref{pf_moments_meso}. Otherwise, one applies Markov's inequality to the power $r=1-\frac{2}{\log(\lambda)}$.
\end{proof}
The corollary indicates that the normalisation in Conjecture \ref{q-aspect_partition_conjecture} is the correct order of magnitude. The mesoscopic theorem can also be generalised to the $t$-aspect for the Riemann zeta function as well, which improves upon bounds obtained in \cite{aor21} and \cite{ah24} in our range of $\theta$. \par
The appearance of the $\sqrt{\log\log(q)}$ in the theorems is connected to the exponent $\beta=2$, the critical exponent in the language of multiplicative chaos. At the critical exponent, there are no longer enough sufficiently high points to support a non-degenerate measure, so we require the Seneta--Heyde normalisation to obtain something meaningful, which corresponds to the $\sqrt{\log\log(q)}$ correction. From a number theoretic perspective, we expect the characters and points which contribute to the size of $\sum_{\chi \bmod{q}}\int_{|h| \leq 1/2}|L(1/2,\chi)|^2dh$ to satisfy $|L(1/2+ih,\chi)| \approx \log(q)$. Such pairs $(\chi,h)$ are rare when one considers $h$ over a constant length or shrinking interval, which necessitates the renormalisation. This is described in greater detail in \cite{sound09}. In very broad terms, this is why the $\beta=2$ partition function is so important for understanding the typical maxima of these $L$-functions. This type of correction is also seen when the intervals considered are growing very slowly. This is investigated for a random model of zeta in the work of Chang \cite{chang2024}, and also appears when studying sums of random multiplicative functions over short intervals. For more details on the latter, see Caich \cite{c24} and Harper--Soundararajan--Xu \cite{hsx26}. \par
The Fyodorov--Hiary--Keating conjectures were first proposed in \cite{fhk12} and explored further in \cite{fk14}. These have been studied rigourously in many works since these papers, with state of the art due to Arguin--Bourgade--Radziwi\l{}\l{} in \cite{abr20} and \cite{abr23}. Among other results, they prove that for $T$ large and $\tau$ being a uniform random variable taking values in $[T,2T]$, then
$$\mathbb{P}_\tau\big(\max_{|h| \leq 1}|\zeta(1/2+i(\tau+h))| > \frac{e^U\log(T)}{(\log\log(T))^{3/4}}\big) \asymp Ue^{-2U}$$
for $U \in [10,\sqrt{\log\log(T)}]$. This confirms order of magnitude for the size of the right tail decay. As for the latest developments in the random matrix setting, see the work of Paquette and Zeitouni \cite{pz22}. We propose an equivalent statement for the maxima on the unit interval centred at $s=1/2$ for typical Dirichlet characters modulo a large prime $q$.
\begin{conjecture} \label{constant_qFHK}
    Suppose $q$ is a sufficiently large prime. Then one has that for $0 \leq U \leq \log\log(q)$,
    \begin{equation} \label{q_count}
        \frac{1}{q-1}\#\big\{\chi \bmod{q}: \max_{|h| \leq 1/2}|L(1/2+ih,\chi)| \geq \frac{e^U\log(q)}{(\log\log(q))^{3/4}} \big\} \asymp Ue^{-2U}.
    \end{equation}
\end{conjecture}
While the work of Harper in \cite{harper2023typicalsizecharacterzeta} predicts the normalisation in Conjecture \ref{q-aspect_partition_conjecture}, it is unclear from previous literature that such a conjecture should hold as the typical maxima are significantly more delicate. In our final theorem, we prove that this conjecture is sharp in the upper bound up to the second order correction term.
\begin{theorem}
    Suppose $q$ is a large prime, $0 \leq U \leq \log\log(q)$. Then 
    \begin{align*}
        &\frac{1}{q-1}\#\big\{\chi \bmod{q}: \max_{|h| \leq 1/2}|L(1/2+ih,\chi)| \geq \frac{e^U\log(q)}{(\log\log(q))^{3/4}} \big\} \\ \ll& e^{-2U}(\log\log\log(q))^2(\log\log\log(q) + U).
    \end{align*}
    \label{short_range_max}
\end{theorem}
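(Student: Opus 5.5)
The plan is to reduce the maximum of $|L(1/2+ih,\chi)|$ over $|h|\le 1/2$ to partition functions on mesoscopic intervals, and then control those with the conditional moment bounds of Theorem \ref{pf_moments_meso} and a union bound over a barrier decomposition. The basic tool is a local-to-average estimate: $L(1/2+s,\chi)$ is analytic, so subharmonicity (or a Sobolev/Gallagher-type inequality) gives
$$|L(1/2+ih_0,\chi)|^2\ll \log^{-\theta}(q)\int_{|h-h_0|\le \log^{\theta}(q)}|L(1/2+ih,\chi)|^2dh+\text{(derivative/shift error)}.$$
With $\theta$ close to $-1$ the interval length is $\asymp 1/\log q$, the natural scale of $L$. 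We cannot take $\theta\le -1/2$ in Theorem \ref{pf_moments_meso}, so we instead choose $\theta=\theta_q$ slightly above $-1/2$. To bridge from scale $\log^{-1/2}q$ down to $1/\log q$, we use a further chaining/conditioning on the Euler product over the primes between $\exp(\log^{1/2}q)$ and $q$.

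\textbf{Decomposition.} Cover $[-1/2,1/2]$ by about $\log^{|\theta|}(q)$ intervals $I_j$ of length $\log^{\theta}(q)$, shifted to be centred at $h_j$. On each interval the $P_\theta$-smooth part is frozen, so $Y_{P_\theta}(h)=Y_{P_\theta}(h_j)+O(1)$ for $h\in I_j$. Theorem \ref{pf_moments_meso} is stated at $h=0$; translation invariance of the character average (twisting by $n^{ih_j}$ barely changes the randomisation) gives the same bound with $Y_{P_\theta}(h_j)$ in place of $Y_{P_\theta}(0)$.

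Set the target $V=e^U\log q/(\log\log q)^{3/4}$. Then
$$\max_h|L|\ge V \;\Longrightarrow\; \exists j:\ \mathcal{Z}_2(I_j)\gg V^2\log^{\theta}(q).$$
Split according to the value $W$ of $Y_{P_\theta}(h_j)$. The Gaussian random walk $Y_{P_\theta}(h_j)$ has variance $\approx\frac{|\theta|}{2}\log\log q$, and the number of $j$ is $\log^{|\theta|}q$. The union bound over $j$ then restricts attention to $W$ at most the level $|\theta|\log\log q-\frac34\log\log\log q+U+O(\log\log\log\log q)$ and its barrier analogue, in the usual ballot/barrier way. For each admissible $W$ and $j$:
\begin{itemize}
\item Corollary \ref{typical_partition_size} (with the $\min\{\log\lambda,\sqrt{\log\log q}\}/\lambda$ tail) bounds the conditional probability that $\mathcal{Z}_2(I_j)$ exceeds $V^2\log^{\theta}q$.
\item This is multiplied by the Gaussian density of the event $Y_{P_\theta}(h_j)\in[W,W+1]$.
\item We sum over $W$ and $j$.
\end{itemize}
The factor $\lambda=V^2\log^{\theta}q\sqrt{\log\log q}/(e^{2W}\log^{1+2\theta}q)$ combines with $e^{-W^2/(|\theta|\log\log q)}\log^{|\theta|}q$. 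The quadratic exponents cancel near the top and leave $e^{-2U}$. The $\log\lambda$ factor and the sum over $W$ in a window of width $\asymp\log\log\log q+U$ should produce the factor $(\log\log\log q+U)$.

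\textbf{Main obstacle.} A plain union bound at the top level loses the $(\log\log q)^{-1/4}$ correction between $1/2$ and $3/4$: it gives only $(\log\log q)^{1/2}$-type normalisation. Recovering the $3/4$ exponent requires the barrier, namely that $Y_{x}(h_j)$ stays below a linear path at all intermediate scales $x=\exp(\log^{\alpha}q)$, $\alpha\le|\theta|$. I would therefore iterate the argument over a sequence of $\theta$'s (say $\theta_k$ with $|\theta_k|$ increasing in steps so that there are $O(\log\log\log q)$ scales), and first remove the complementary event that the barrier is crossed at some scale. The cost of the latter is bounded by the same union bound at the crossing scale, with the second statement of Theorem \ref{pf_moments_meso} (conditioning on $Y_{P_\theta}\le W$) providing the barrier-type input. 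The ballot estimate for the random walk then gives the extra $(\log\log q)^{-1/4}$ gain, up to the losses of $(\log\log\log q)^2$ from the number of scales and the discretisation of $W$. This produces the stated $e^{-2U}(\log\log\log q)^2(\log\log\log q+U)$.

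The delicate points are:
\begin{itemize}
\item making the one-point conditioning of Theorem \ref{pf_moments_meso} compatible with a multi-scale barrier;
\item handling the final step from $\theta$ near $-1/2$ to pointwise values, which I would do with a high-moment (Markov with $r$ near $1$) bound on the residual Euler product over large primes together with the subharmonic inequality above.
\end{itemize}
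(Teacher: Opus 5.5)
\textbf{Verdict: there is a genuine gap.} The local-to-average step is off by a large factor, and, more fundamentally, the mesoscopic moment bounds cannot produce the exponent $3/4$.

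\textbf{The local step is at the wrong scale.} The inequality you start from is false at the scale you use. $L(1/2+ih,\chi)$ varies on the scale $1/\log(q)$. A subharmonic mean over a disc of radius $\log^{\theta}(q)$ involves $|\sigma-1/2|$ up to $\log^{\theta}(q)$, where $|L|$ is unrelated to its values on the critical line. In a Gallagher/Sobolev form, the derivative term is typically of size $\log(q)\,\mathcal{Z}_2(I_j)$ and dominates. Write $M=e^U\log(q)/(\log\log(q))^{3/4}$ for your target. Then $\max_h|L|\ge M$ only forces $\mathcal{Z}_2(I_j)\gg M^2/\log(q)$, not $M^2\log^{\theta}(q)$. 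So your $\lambda$ is too large by a factor $\log^{1+\theta}(q)$. With your $\lambda$, the sum over $W$ and $j$ would come out as a negative power of $\log(q)$, which already signals an error. The paper does this step at the natural scale. It applies Cauchy's formula to the approximate-functional-equation polynomial on a square of side $\asymp 1/\log(q)$ around a grid point $\tilde{h}_\chi(*)\in\frac{1}{\log(q)}\mathbb{Z}$, so only $|\sigma-1/2|\le 1/\log(q)$ occurs, where Lemmas \ref{factored_form} and \ref{conditional_fourth} hold uniformly. It then takes a union over the $\asymp\log(q)$ grid points, which amounts to an integral over $|h|\le 1/2$ weighted by $\log(q)$.

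\textbf{The mesoscopic bounds cannot reach $3/4$.} With the correct threshold, Theorem \ref{pf_moments_meso} cannot give the exponent $3/4$. Put $n=\log\log(q)$ and $Y_{P_\theta}(h_j)=|\theta|n-x$. Then Corollary \ref{typical_partition_size} has $\lambda\asymp e^{2x+2U}/n$, so the conditional tail is about $ne^{-2x-2U}$ (up to a factor $\log\lambda$). Heuristically the truth is about $(x+U)e^{-2x-2U}$.

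Grant a perfect ballot bound on the first part of the walk, so the expected number of centres at this level is $\asymp(U+1)(x+1)e^{2x}n^{-3/2}$. Summing over $x\le\sqrt{n}$ still gives $\asymp(U+1)\sqrt{n}\,e^{-2U}$, which is only a $(\log\log q)^{1/2}$ normalisation.

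Iterating over $\theta_k$ cannot repair this. Every $P_{\theta_k}\le\exp(\log^{1/2}(q))$, and Theorem \ref{pf_moments_meso} is only stated for fixed $\theta$. The missing saving is the endpoint-window ballot factor on the scales between $\exp(\log^{1/2}(q))$ and $P$. That is exactly the ``bridging'' step you leave unspecified. A first moment of a $\beta=2$ partition function cannot see this saving, because $\mathbb{E}\mathcal{Z}_2$ is dominated by $h$ whose walk ends about $\sqrt{n}$ below the barrier. In the notation of the second lemma of Section \ref{prob_heur}, it captures $\min\{1,a/\sqrt{n}\}$ but not $(\min\{1,b/\sqrt{n}\})^2$.

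\textbf{What the paper does instead.} It uses no mesoscopic conditioning for this theorem. It imposes the all-scale barrier $\widetilde{\mathcal{G}}_\chi$, which fails with probability $\ll e^{-2U}$ by Proposition \ref{max_event_fail}. It then splits according to $\mathcal{H}_\chi(h;V)$, where $V=e^{-U}(\log\log(q))^6$ is a separate parameter from your $V$.
\begin{itemize}
\item If the $P$-smooth polynomial is at most $\log(P)/V$, H\"older's inequality and the fourth-moment bound of Lemma \ref{conditional_fourth} suffice.
\item If it exceeds $\log(P)/V$, Lemma \ref{partial_exp} forces the Euler product up to $P$ to end within $\log V+O(1)$ of the top. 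Randomisation plus the endpoint-window estimate then give Proposition \ref{conditional_bound_max}.
\end{itemize}
In that proposition, the factor $(U+\log\log\log q)(7\log\log\log q)^2/(\log\log P)^{3/2}$ cancels the $(\log\log q)^{3/2}$ coming from $M^2$. This cancellation, not the number of scales, is the source of the factor $(\log\log\log q)^2(\log\log\log q+U)$.
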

In particular, for any $U(q) \to \infty$, we have
$$\#\big\{\chi \bmod{q}: \max_{|h| \leq 1/2}|L(1/2+ih,\chi)| \geq \frac{e^U\log(q)(\log\log\log(q))^{3/2}}{(\log\log(q))^{3/4}}\big\} =o(q).$$
\subsection{Proof ideas}
There are two main components to each proof: a randomisation argument and a probabilistic analysis of the resulting random Euler products. In all our theorems, we adapt Harper's approach in \cite{harper2019partition} to our setting. \par
For all three of our theorems, once we transfer to sums of random multiplicative functions, the probabilistic analysis is rather similar as we can transition to studying expressions of the form
\begin{equation}\label{heuristic_quantity}
    \sum_{\chi \bmod{q}}\mathbf{1}_{\mathcal{G}_\chi}\int_{|h| \leq 1/2}|\sum_{\substack{m \leq q^{\varepsilon} \\ P^+(m) \leq P}}\frac{\chi(m)}{m^{1/2+ih}}\sum_{\substack{n \leq q^{1/2-2\varepsilon} \\ P^-(n) > P}}\frac{\chi(n)}{n^{\sigma+ih}}|^2dh
\end{equation}
where $P$ is a suitable parameter ($P$ is chosen such that $\log\log(P) \asymp \log\log(q)$) and $\mathcal{G}_\chi$ is an event which occurs with high probability which controls the size of the Dirichlet polynomial at many scales. Indeed, the key observation is after passing to the random side, the size of the Dirichlet polynomial over the $P$ smooth integers conditioned on the appropriate randomised event $\mathbf{1}_{\mathcal{G}_\text{rand}}$ is $\ll \log(P)/\sqrt{\log\log(P)}$. This is a consequence of the ballot theorem which is sufficient for the bounds in Theorems \ref{pf_moments_constant} and \ref{pf_moments_meso}.\par
The further saving of $\log\log(P)$ seen in Theorem \ref{short_range_max} comes from strengthening the event $\mathcal{G}_\chi$ to also include a condition that ensures that $$|\sum_{\substack{m \leq q^{\varepsilon} \\ P^+(m) \leq P}}\frac{\chi(m)}{m^{1/2+ih}}| > \frac{\log(P)}{V}$$
for some parameter $V$. Since we are looking at very large values, one hopes to save even more from the ballot theorem. This is the case, and we conclude from there. This demonstrates the importance of the $\beta=2$ partition function since it allows for the full range of ballot theorem savings to be accessed, which are simply not available at smaller $\beta$. \par
When setting up the transfer to random multiplicative functions, we use both the approximate functional equation for the $L$-function and then approximate this using a product of two Dirichlet polynomials (three in the case of Theorem \ref{pf_moments_meso}) split according to their prime factor compositions. At this stage, one is ready to apply Harper's randomisation estimates in \cite{harper2023typicalsizecharacterzeta} in the proofs of Theorem \ref{pf_moments_constant} and \ref{pf_moments_meso}. The orthogonality of Dirichlet characters disappears when we attempt to condition upon them directly, so we have to move to the random side so we can condition without affecting orthogonality. \par
The proof of Theorem \ref{short_range_max} requires a different initial approach. A similar idea applies, but we use the Cauchy integral formula around a typical maximum. This is done by approximating its value by an integral over a rectangle containing the maximum using the approximate functional equation
$$\sum_{n \leq \sqrt{q/2\pi}}\frac{\chi(n)}{n^{1/2+ih(*)}} = \frac{1}{2\pi i}\int_{\mathcal{R}(*)}\frac{\sum_{n \leq \sqrt{q/2\pi}}\frac{\chi(n)}{n^s}}{(s-1/2-ih(*))}ds$$
where $\mathcal{R}(*)$ is a small rectangle in the complex plane containing the point $h(*)$ (with sides of length $\asymp 1/\log(q)$. Eventually, it is sufficient to use the equivalent of a union bound over every point in the constant length interval as these maxima are very rare. Various applications of Cauchy-Schwarz and Holder's inequality allow us to reduce this to studying either (simple) fourth moment estimates or the $\beta=2$ partition function. \par
The transfer mechanism for the proof of Theorem \ref{pf_moments_meso} differs in one key aspect. The style of mollifier we use here is derived from mean value estimates of Dirichlet polynomials as we are approximating $L$ directly (as opposed to $\log(|L|)$ as seen in \cite{abr20} for example). Additive mollifiers may be conceptually simpler to work with, but they are not capable of providing a good enough approximation if $\theta$ is too small. We see this when $\theta$ approaches $-1/2$ suitably quickly, then the mean value estimates are not sufficient for obtaining the correct order of magnitude. \par 
Finally, we mention the chaining argument we use to extract the mesoscopic conditioning factor. This follows similarly to Arguin--Hamdan in \cite{ah24}, and we get to that point once we successfully transfer from character sums over smooth numbers to sums of random multiplicative functions over smooth numbers. We do this to avoid periodicity issues with Dirichlet characters which the Steinhaus random multiplicative functions do not have. When on the random side, we can approximate this Dirichlet polynomial using (something very close to) a truncated Euler product. A similar strategy is seen in the work of Hardy and Xu in \cite{hx26}.
\subsection{Further discussion of results} \label{further_disc}
Given our discussion of Theorems \ref{pf_moments_constant} and \ref{pf_moments_meso}, we remark on Conjecture \ref{constant_qFHK} and Theorem \ref{short_range_max}. Following the work of Arguin--Bourgade--Radziwi\l{}\l{}, it should be possible to recover Conjecture \ref{constant_qFHK} in its entirety if one adapts the work in \cite{abr20} and \cite{abr23} to the $q$-aspect. Since we focus on an upper bound in this paper, the arithmetic input in \cite{abr20} could be mimicked by computing a similar twisted fourth moment in the $q$-aspect (such an argument would be similar to that seen in \cite{ac25} and \cite{creighton25} with the fourth moment instead of the second moment they study there). However, this is typically a significant algebraic challenge (see the work of \cite{gz25twisted} for example) to obtain a tractable expression, let alone obtaining bounds in $q$. We still achieve the second order correction term in Theorem \ref{short_range_max}, and could be further improved by a more delicate handling of the Dirichlet polynomial over the rough numbers. \par
As for improvements to the results, the most significant one is to Theorem \ref{short_range_max} since we resort to approximating a Dirichlet polynomial over rough numbers by Cauchy--Schwarz, which results in forcing one of the parameters $V$ to be taken as too large later on in the proof. The other issue is a slight loss on the sharpness of the right tail upper bound due to Markov's inequality in Corollary \ref{typical_partition_size}. We conjecture that for $\theta \in (-1/2,0]$ and fixed $\lambda \geq 2$, then one has
$$\frac{1}{q-1}\# \big\{\chi \bmod{q}: \mathcal{Z}_2(q,\chi,\theta) > \frac{\lambda \log(q)}{\sqrt{(1+\theta)\log\log(q)}}\big\} \asymp \frac{1}{\lambda}$$
in line with similar types of results obtained in \cite{barral15}, which would likely require Conjecture \ref{q-aspect_partition_conjecture} to be proved fully. \par
Finally, we discuss matching lower bounds for our theorems. For Theorem \ref{short_range_max}, following the work of \cite{abr23} could lead to a lower bound at the precision predicted by Conjecture \ref{constant_qFHK} as their approach is very probabilistic in nature. However, proving an order of magnitude result comparable to Theorem \ref{pf_moments_constant} is a different challenge. This would either require a proof of Conjecture \ref{q-aspect_partition_conjecture}, or deploying a strategy similar to that in \cite{harper26}. These need rather different techniques to the ones developed here, so we choose not to pursue them.
\subsection{Notation and organisation of the paper}
Due to the different notions of averaging throughout the paper, we summarise them here. We will use $\mathbb{E}^{\text{char}}$ to denote averaging over all Dirichlet characters modulo $q$, and simply $\mathbb{E}$ to refer to averages involving random multiplicative functions, and when we want to condition on the value of short Dirichlet polynomial $Y_{P_\theta}(0)$ to be in the interval $[W,W+1]$ we denote this by $\mathbb{E}_{W}^{\text{char}}$ and $\mathbb{E}_W$ respectively. There are also times when we want to understand the size of a set where an event occurs for characters, so we let $\mathbb{P}_q$ denote the uniform probability measure over the set of Dirichlet characters mod $q$. Since we are also considering the mesoscopic regime, we fix some important quantities. For a given value of $\theta$, we let $P_\theta = \exp(\log^{|\theta|}(q))$ and $\varepsilon_\theta = \frac{\log^\theta(q)}{(\log\log(q))^2}$ (which we observe is $\gg (\log(q))^{-1/2}$ for the range of $\theta$ we are investigating). Finally we discuss Dirichlet polynomials where the contributions come from integers with certain factorisation properties throughout the paper. We use $P^+(n)$ and $P^{-}(n)$ to denote the largest and smallest prime factor of $n$ respectively.\par
We collect most of our results we need to prove our theorems in Section \ref{sec2}. We then prove the chaining argument we use throughout in Section \ref{chain_arg}. Theorems \ref{pf_moments_constant} and \ref{pf_moments_meso} will be proven in Sections \ref{pfm_proof} and both theorems will be proved together. We defer the proof of Theorem \ref{short_range_max} to Section \ref{typical_max_section}. Where appropriate, we will also comment on the differences in approaches seen in \cite{harper2019partition} and \cite{abr20}. For a more general review of results relating to the progress on the FHK conjectures and critical multiplicative chaos, see the papers of Bailey--Keating \cite{bk22} and Powell \cite{powell22} respectively.
\subsection*{Acknowledgements}
The author would like to thank his supervisors Oleksiy Klurman and Joseph Najnudel for carefully reading an earlier draft of this paper, and for various discussions relating to this work. He also thanks Jad Hamdan in particular for sharing a version of \cite{ah24} early and for various related discussions. The author also thanks Emma Bailey and Andrew Pearce-Crump for their comments and encouragement on this problem. Finally, he thanks Adam Harper, Seth Hardy and Max Xu for discussing their works which are closely related to this problem. Some of this work was completed while the author was visiting the Centre de Recherches de Math\'{e}matiques de Montr\'{e}al and thanks them for excellent working conditions. The author is supported by the Heilbronn Institute for Mathematical Research.
\section{Preliminary Results} \label{sec2}
\subsection{A partition of unity}
Before we can get to introducing the most important prerequisites, we need a suitable partition of unity. This will allow us to transfer between estimates over character sums and estimates for sums of Steinhaus random multiplicative functions.
\begin{lemma}
    (Approximation Result 1 in \cite{harper2023typicalsizecharacterzeta}) Let $N$ be large and $\delta>0$ be small. Then there exists functions $g,g_{N+1}:\mathbb{R} \to \mathbb{R}$ (depending on $N$ and $\delta$) such that for $g_j(x) = g(x-j)$ where $j$ are all the integers such that $|j| \leq N$, the following hold:
    \begin{enumerate}
        \item $\sum_{|j| \leq N}g_j(x) + g_{N+1}(x) = 1, \quad \forall x \in \mathbb{R}$;
        \item $g(x) \geq 0, \quad \forall x \in \mathbb{R}$ and $g(x) \leq \delta$ whenever $|x| \geq 1$;
        \item $g_{N+1}(x) \geq 0, \quad \forall x \in \mathbb{R}$ and $g_{N+1}(x) \leq \delta$ whenever $|x| \leq N$;
        \item $\forall \ell \in \mathbb{N}$ and $\forall x \in \mathbb{R}$, the derivative estimate $\big|\frac{d^{\ell}}{dx^{\ell}}(g(x))\big| \leq \frac{1}{\pi(\ell+1)}\bigg(\frac{2\pi}{\delta}\bigg)^{\ell+1}$ holds.
    \end{enumerate}
    \label{partition_result}
\end{lemma}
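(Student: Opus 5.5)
We construct $g$ by smoothing the indicator of a unit interval, and then let $g_{N+1}$ absorb everything the translates miss. Concretely, we take $g = \mathbf{1}_{[-1/2,1/2)} * \phi_\delta$, where $\phi_\delta \ge 0$ is a mollifier of total mass one. Since the translates $\mathbf{1}_{[j-1/2,j+1/2)}$ over $j\in\mathbb{Z}$ tile $\mathbb{R}$ exactly, the full sum $\sum_{j\in\mathbb{Z}} g(x-j)$ equals $1$ identically. We then define
$$g_{N+1}(x) := 1-\sum_{|j|\le N} g_j(x) = \sum_{|j|>N} g_j(x).$$
With this definition property (1) holds by construction, and $g_{N+1}\ge 0$ automatically.

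The mollifier is chosen so that the bound $g\le\delta$ for $|x|\ge 1$ and the derivative bound (4) come out cleanly. Following Harper's approach (a Beurling--Selberg/Vaaler type construction), we take $\phi_\delta$ with Fourier transform compactly supported in $[-2\pi/\delta, 2\pi/\delta]$; for instance a normalised Fejér kernel $\phi_\delta(x) = \frac{1}{c\delta}\big(\frac{\sin(\pi x/\delta)}{\pi x/\delta}\big)^2$. Then $\widehat g = \widehat{\mathbf{1}}\cdot\widehat{\phi_\delta}$ is supported in $|\xi|\le 2\pi/\delta$ and bounded by $1$ in absolute value. Fourier inversion gives
$$|g^{(\ell)}(x)| \le \frac{1}{2\pi}\int_{|\xi|\le 2\pi/\delta}|\xi|^\ell\,d\xi = \frac{1}{\pi(\ell+1)}\Big(\frac{2\pi}{\delta}\Big)^{\ell+1},$$
which is exactly (4). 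This computation explains the shape of the constant in (4) and pins down the normalisation of $\phi_\delta$.

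The main obstacle is the tail requirement: $g(x)\le\delta$ for $|x|\ge1$, and $g_{N+1}(x)\le\delta$ for $|x|\le N$. For the first, when $|x|\ge 1$ the point $x$ is at distance at least $1/2$ from $[-1/2,1/2)$, so
$$g(x)\le \int_{|y|\ge 1/2}\phi_\delta(y)\,dy \ll \int_{|y|\ge 1/2}\frac{\delta}{y^2}\,dy \ll \delta,$$
using that the Fejér kernel decays like $\delta/y^2$. For the second, when $|x|\le N$ the translates with $|j|>N$ sit at distance at least $1/2$ from $x$, so $g_{N+1}(x)$ is again bounded by the $\phi_\delta$-mass at distance $\ge 1/2$ from $x$, which is $\ll\delta$. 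Since $\phi_\delta$ has heavy $1/y^2$ tails, the implied constants have to be tracked. To obtain literally $\le\delta$, we run the construction with $\delta' = c\delta$ in place of $\delta$ for a small absolute $c$. This replaces the bound in (4) by $(2\pi/(c\delta))^{\ell+1}$, so to keep the exact constant in (4) the support of $\widehat{\phi}$ has to be balanced against the tail mass. The cleanest way to do this is to take $g$ directly from Harper's Approximation Result 1, as the citation in the statement indicates; the proof then reduces to verifying properties (1)--(4) for that construction as sketched above.
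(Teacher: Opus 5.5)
Your construction is correct and is the standard Fejér-kernel smoothing behind Harper's Approximation Result 1, which the paper imports without proof. You take $g=\mathbf{1}_{[-1/2,1/2)}*\phi_\delta$ with $\phi_\delta(y)=\delta^{-1}(\sin(\pi y/\delta)/(\pi y/\delta))^2$, which already has mass one, so $c=1$. You set $g_{N+1}=\sum_{|j|>N}g_j$, and (4) follows exactly from $|\widehat{g}|\le 1$ on $|\xi|\le 2\pi/\delta$.

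The hedge at the end is unnecessary: no rescaling of $\delta$ is needed, so (4) keeps its exact constant. Since $\phi_\delta(y)\le \delta/(\pi^2 y^2)$, you have $\int_{|y|\ge 1/2}\phi_\delta(y)\,dy\le 4\delta/\pi^2<\delta$. When $|x|\ge 1$ (for $g$) or $|x|\le N$ (for $g_{N+1}$), only $y$ with $|y|\ge 1/2$ contribute to the convolution, so (2) and (3) hold literally.
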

\subsection{Estimates involving character averaging}
The most important tools in the proof are estimates involving character sums and connecting suitably averaged sums to their random multiplicative counterparts. These are proved in \cite{harper2023typicalsizecharacterzeta}. We start with an even moment estimate.
\begin{lemma}
    (Lemma 1 in \cite{harper2023typicalsizecharacterzeta}) Let $x \geq 1$ and $(c(n))$ be a set of arbitrary complex numbers. Let $\mathcal{P}$ be a finite set of primes, and $\mathcal{R}$ be a set containing the elements of $\mathcal{P}$ and their squares. Let $U = \max\{r \in \mathcal{R}\}$. Finally, set $R(\chi) = \sum_{r \in \mathcal{R}}\frac{a(r)\chi(r)}{\sqrt{r}}$ with $(a(r))$ a separate set of arbitrary complex numbers. Then for any $k \in \mathbb{N}$ such that $xU^k < q$
    $$\mathbb{E}^{\text{char}}\big|\sum_{n \leq x}c(n)\chi(n)\big|^2|R(\chi)|^{2k} \ll \big(\sum_{n \leq x}\tilde{d}(n)|c(n)|^2\big)\cdot(k!)\big(\sum_{r \in \mathcal{R}}\frac{2v_r|a(r)|^2}{r}\big)^k$$
    where $\tilde{d}(n) := \sum_{d|n}\mathbf{1}_{p|d \Rightarrow p \in \mathcal{P}}$ and $v_r$ being equal to $1$ if $r$ is a prime, and $6$ otherwise. \label{even_moment}
\end{lemma}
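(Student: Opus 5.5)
We will expand the left-hand side and use orthogonality of characters modulo $q$. Write $A(\chi)=\sum_{n\le x}c(n)\chi(n)$ and $R(\chi)^k=\sum_{m}b_k(m)\chi(m)$, where $m$ runs over products $r_1\cdots r_k$ with $r_i\in\mathcal{R}$ and
$$b_k(m)=\sum_{r_1\cdots r_k=m}\prod_{i}\frac{a(r_i)}{\sqrt{r_i}}.$$
Then $A(\chi)R(\chi)^k=\sum_{N}\big(\sum_{nm=N}c(n)b_k(m)\big)\chi(N)$ is a Dirichlet polynomial supported on $N\le xU^k<q$. Orthogonality $\mathbb{E}^{\text{char}}\chi(N_1)\overline{\chi(N_2)}=\mathbf{1}_{N_1\equiv N_2 \bmod q}$ (for $(N_i,q)=1$) then collapses to the diagonal $N_1=N_2$, because $1\le N_i<q$. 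Hence
$$\mathbb{E}^{\text{char}}|A R^k|^2 \le \sum_N\Big|\sum_{nm=N}c(n)b_k(m)\Big|^2.$$
This equals $\mathbb{E}|\sum_n c(n)f(n)|^2|\sum_r a(r)f(r)/\sqrt r|^{2k}$ for a Steinhaus $f$, which is the form in which the bound is natural.

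Next we bound the inner sum by Cauchy--Schwarz. For fixed $N$, the pairs $(n,m)$ with $nm=N$ and $m$ composed of primes in $\mathcal{P}$ are indexed by the $\mathcal{P}$-part divisors $m$ of $N$. Writing $w(m)=\mathbf{1}_{b_k(m)\ne0}$, we get
$$\Big|\sum_{nm=N}c(n)b_k(m)\Big|^2\le \Big(\sum_{nm=N}|c(n)|^2 w(m)\Big)\cdot\Big(\sum_{m\mid N}|b_k(m)|^2\Big),$$
which does not quite factor. A better choice is to put the weight on the $c$ side: apply Cauchy--Schwarz as $\big(\sum_{nm=N}|c(n)|^2\big)\big(\sum_{nm=N}|b_k(m)|^2\big)$ and then sum over $N$ by writing $N=nm$. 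Swapping sums, the total is at most $\sum_n|c(n)|^2\sum_m|b_k(m)|^2\cdot\#\{m':m'\mid nm\ \text{$\mathcal{P}$-smooth}\}$. This divisor count is $\le \tilde d(n)\cdot\tilde d(m)$, and that is where $\tilde d(n)$ enters. The stray factor $\tilde d(m)$ must be absorbed into the combinatorics of $\sum_m|b_k(m)|^2$.

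The main obstacle is therefore the estimate
$$\sum_m \tilde d(m)\,|b_k(m)|^2\ \ll\ k!\,\Big(\sum_r \frac{2v_r|a(r)|^2}{r}\Big)^k.$$
The plan is the standard moment computation. $\sum_m|b_k(m)|^2$ counts pairs of $k$-tuples $(r_i),(s_i)$ with $\prod r_i=\prod s_i$. Since the $r$'s are primes or prime squares, we match prime factors: a product of $k$ elements determines the multiset up to the ambiguity $p^2=p\cdot p$. Cauchy--Schwarz on the multinomial coefficients gives at most $k!\prod(\cdot)$ with each square term picking up a bounded constant, which is the source of $v_r=6$ versus $1$ and of the factor $2$. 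The extra $\tilde d(m)$ is handled alongside: each distinct prime in $m$ contributes a factor at most $(1+\text{exponent})$, and this is absorbed into the per-element constants $2v_r$. The cleanest route is to verify that the generating function $\prod_p\big(1+\ldots\big)$ bound holds termwise, i.e.\ that $\sum_m \tilde d(m)|b_k(m)|^2\le k!\,\big[\sum_r 2v_r|a(r)|^2/r\big]^k$ after expanding and comparing coefficients via $|b_k(m)|^2\le(\#\text{orderings})\sum|\prod a|^2/m$.

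Finally, if that bookkeeping proves awkward, we can fall back on the random model. We bound $\mathbb{E}|\sum c(n)f(n)|^2|R(f)|^{2k}$ by conditioning on $(f(p))_{p\in\mathcal{P}}$: writing $n=n_1n_2$ with $n_1$ the $\mathcal{P}$-part and using independence, this yields $\tilde d(n)$ through Cauchy--Schwarz over $n_1$. The $2k$-th moment of $R(f)$ is then estimated by the hypercontractive/Wick-type bound $\mathbb{E}|R|^{2k}\le k!\,(\sum 2v_r|a(r)|^2/r)^k$.
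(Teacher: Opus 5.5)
The paper gives no proof of this lemma; it is imported from Harper. Your main line is the standard argument: exact reduction to the diagonal $\sum_N|\sum_{nm=N}c(n)b_k(m)|^2$ by orthogonality (since $1\le N<q$), then Cauchy--Schwarz for each $N$, then $\tilde d(nm)\le\tilde d(n)\tilde d(m)$, then a divisor-weighted moment bound for $R^k$. However, the Cauchy--Schwarz you actually write is the wrong one. Summing $\big(\sum_{nm=N}|c(n)|^2\big)\big(\sum_{nm=N}|b_k(m)|^2\big)$ over $N$ gives $\sum_{n,m}|c(n)|^2\sum_{n'm'=nm}|b_k(m')|^2$. This pairs $|c(n)|^2$ with $b_k$ at every admissible $m'$, not with $|b_k(m)|^2$ times a count as you assert, and it genuinely loses. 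Take $k=1$, $\mathcal{P}=\{p\le P\}$, $a(p)=1$, $a(p^2)=0$, $x\ge P$, $c(2)=1$ and $c(p)=1/P$ for odd $p\le P$. Then every $N=2p$ picks up $|c(2)|^2|b_1(2)|^2=1/2$ via the pair $(p,2)$, so your bound is $\gg\pi(P)$, while the true value and the target are both $\asymp\log\log P$. The step you need is Cauchy--Schwarz against the all-ones vector. For fixed $N$ the pair $(n,m)$ is determined by the $\mathcal{P}$-smooth divisor $m\mid N$, so $|\sum_{nm=N}c(n)b_k(m)|^2\le\tilde d(N)\sum_{nm=N}|c(n)|^2|b_k(m)|^2$, and summing over $N$ gives exactly $\sum_{n,m}\tilde d(nm)|c(n)|^2|b_k(m)|^2$.

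The remaining estimate $\sum_m\tilde d(m)|b_k(m)|^2\le k!\big(\sum_r 2v_r|a(r)|^2/r\big)^k$ is true, even with constant $1$, but your paragraph on it describes a plan rather than proving it. To close it, index the factorisations of $m$ by $(\alpha_p,\beta_p)_p$, the numbers of factors equal to $p$ and to $p^2$. Then $e_p(m)=\alpha_p+2\beta_p$, $\sum_p(\alpha_p+\beta_p)=k$, and
\[
b_k(m)=m^{-1/2}\sum_{(\alpha,\beta)}\frac{k!}{\prod_p\alpha_p!\,\beta_p!}\prod_p a(p)^{\alpha_p}a(p^2)^{\beta_p},
\]
a sum of at most $\prod_p(\lfloor e_p/2\rfloor+1)$ terms. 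Cauchy--Schwarz over these terms, together with $\tilde d(m)=\prod_p(e_p+1)$ and the multinomial theorem, reduces everything to the per-prime inequality $(\lfloor e/2\rfloor+1)(e+1)\le\alpha!\,\beta!\,2^{\alpha}12^{\beta}$ for $e=\alpha+2\beta$. This is elementary and is an equality at $(\alpha,\beta)=(1,0)$; that is exactly where the factor $2$ in $2v_r$ pays for $\tilde d(m)$. Finally, discard the fallback. Cauchy--Schwarz over $n_1$ pointwise in $f$ bounds $|\sum_{n_1}c(n_1n_2)f(n_1)|^2$ by the number of $\mathcal{P}$-smooth $n_1\le x/n_2$ times $\sum_{n_1}|c(n_1n_2)|^2$, which is nowhere near $\tilde d(n)$. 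The divisor weight only comes out of the diagonal condition $nm=n'm'$, and that is lost as soon as you separate $\sum c(n)f(n)$ from $R(f)$.
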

This will also act as a moment estimate when decoupling the log-normal and log-correlated regimes in the mesoscopic case. The key result is the following randomisation estimate which allows us to pass from character averages to averages over Steinhaus multiplicative functions. It will allow us to approximate our barrier events as smooth functions which is very beneficial in this work. This will act as a condensed version of much of the work done in Key Proposition 1 in \cite{harper2019partition}.
\begin{lemma}
     (Proposition 1 in \cite{harper2023typicalsizecharacterzeta}) Let $g_j$ with associated parameters $N,\delta$ be as described in Lemma \ref{partition_result}. Further, let $x \geq 1$ and suppose $(c(n))_{n \leq x}$ are arbitrary complex coefficients such that $|c(n)| \leq 1$ for each $n$. Finally, suppose $P$ is large and $Y \in \mathbb{N}$ such that $xP^{400(Y/\delta)^2\log(N\log(P))} < q$. \par
    Then for any indices $-N \leq j(1),...,j(Y)\leq N$ and sequences $((a_i(p),a_i(p^2))_{p \leq P})_{i \leq Y}$ of complex numbers with absolute value $\leq 1$, we have
    \begin{align*}
        \mathbb{E}^{\text{char}}\prod_{i=1}^Y&g_{j(i)}\big(\Re\big(\sum_{p \leq P}\frac{a_i(p)\chi(p)}{p^{1/2}} + \frac{a_i(p^2)\chi(p^2)}{p}\big)\big)\big|\sum_{n \leq x}\frac{c(n)\chi(n)}{n^{1/2}}\big|^2 \\
        = \mathbb{E}\prod_{i=1}^Y&g_{j(i)}\big(\Re\big(\sum_{p \leq P}\frac{a_i(p)f(p)}{p^{1/2}} + \frac{a_i(p^2)f(p^2)}{p}\big)\big)\big|\sum_{n \leq x}\frac{c(n)f(n)}{n^{1/2}}\big|^2 + O\big(\frac{\log(x)}{(N\log(P))^{Y/\delta^2}}\big).
    \end{align*} \label{randomisation}
\end{lemma}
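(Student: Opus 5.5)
The plan is to reduce to a smooth-function statement and compare character and Steinhaus averages through their moments. Each $g_{j(i)}$ is bounded and smooth with controlled derivatives by Lemma \ref{partition_result}(4). I would approximate it on a large interval by a trigonometric polynomial, or equivalently by a truncated Taylor expansion of its Fourier representation. This turns $\prod_{i\le Y} g_{j(i)}(\Re(\cdot))$ into a polynomial in the quantities
$$S_i(\chi)=\sum_{p\le P}\Big(\frac{a_i(p)\chi(p)}{p^{1/2}}+\frac{a_i(p^2)\chi(p^2)}{p}\Big)$$
and their conjugates. Concretely, write $g(x)=\int \hat g(\xi)e^{2\pi i \xi x}\,d\xi$ and truncate $|\xi|\le 1/\delta$ (approximately). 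Then Taylor expand $e^{2\pi i\xi \Re S_i}$ to order $K\asymp (Y/\delta^2)\log(N\log P)$. The derivative bounds in (4) control the Fourier tail. The Taylor remainder is small when $|S_i|$ is not too large.

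After expansion, the main term is a finite linear combination of expressions $\mathbb{E}^{\text{char}}\prod_i S_i^{k_i}\overline{S_i}^{l_i}\,|\sum_{n\le x}c(n)\chi(n)n^{-1/2}|^2$ with total degree at most $YK$. Multiplying out, each term is a Dirichlet polynomial of length at most $xP^{2YK}$ times the conjugate of another such polynomial. Under the hypothesis $xP^{400(Y/\delta)^2\log(N\log P)}<q$, orthogonality of characters gives $\mathbb{E}^{\text{char}}\chi(m)\overline{\chi}(m')=\mathbf 1_{m=m'}$ exactly, since $m,m'<q$ and are coprime to $q$. This is precisely $\mathbb{E} f(m)\overline{f(m')}$ for Steinhaus $f$, so every polynomial main term agrees exactly between the two sides.

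It remains to bound the approximation errors on both sides; this is the main obstacle. The error after truncation is pointwise bounded by something like $\sum_i (C|S_i|/\delta)^{K}/K!$ plus a small Fourier-tail term, multiplied by $|\sum c(n)\chi(n)n^{-1/2}|^2$. To control it I would apply Lemma \ref{even_moment} with $R=S_i$ and $k=K$. The admissibility $xU^k<q$ follows from the hypothesis since $U\le P^2$. This gives
$$\mathbb{E}^{\text{char}}|S_i|^{2K}\Big|\sum_{n\le x}\frac{c(n)\chi(n)}{n^{1/2}}\Big|^2\ll \Big(\sum_{n\le x}\frac{\tilde d(n)}{n}\Big)K!\,(C\log\log P)^K.$$
I would handle the odd-power issue with Cauchy--Schwarz. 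The divisor sum contributes $\ll \log x$ times a power of $\log P$ that is absorbed. The same bound holds on the random side by the identical computation. Choosing $K$ as above, the ratio $K!(C\log\log P)^K/(\delta^{2K}(K!)^2)$ becomes $\le (N\log P)^{-Y/\delta^2}$. The Fourier tail is made equally small by truncating at a suitable multiple of $1/\delta$ using (4).

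The delicate bookkeeping is making the exponents match across the $Y$ factors. The products of $Y$ approximations must be handled by telescoping: replace one $g_{j(i)}$ at a time, keeping the others bounded by $1+\delta$ (since $\sum_j g_j\le 1$ and the $g_j$ are nonnegative). Replacing a factor by its polynomial approximant, which may exceed $1$, requires the moment bound again. I would follow Harper's Proposition 1 structure for this.
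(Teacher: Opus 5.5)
Your proposal is correct and follows essentially the paper's proof. The paper defers to Harper's Proposition 1, which is the argument you reconstruct: Taylor-expand each $g_{j(i)}$ using the derivative bounds (4), match the polynomial main terms exactly by orthogonality since every length is below $q$, and control the remainders with Lemma \ref{even_moment}. The only addition is the sharper bound $\sum_{n\le x}\tilde d(n)|c(n)|^2/n\ll\log x\log P$, obtained by splitting $n$ into its $P$-smooth and $P$-rough parts, which is the divisor-sum estimate you invoke.
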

\begin{proof}
This is identical to that of Proposition 1 in \cite{harper2023typicalsizecharacterzeta}, but we can estimate $\sum_{n \leq x}\frac{|c(n)|^2\tilde{d}(n)}{n}$ in a sharper manner. Indeed, we observe that by Lemma \ref{even_moment}
$$\sum_{n \leq x}\frac{|c(n)|^2\tilde{d}(n)}{n} \leq \sum_{\substack{m \leq x \\ P(m) \leq P}}\frac{d(m)}{m}\sum_{\substack{n \leq x/m \\ P(n) > P}}\frac{1}{n} \ll \log(x)\log(P).$$
Following the rest of the calculations in the proposition yields the result.
\end{proof}
Note that virtually all of the choices of $\delta$ and $P$ are motivated by the condition relating the various parameters above, so it is in our interest to make sure both $P$ and $\delta$ are as large as possible while getting this result to hold. Next we need some results on the Dirichlet $L$-functions on the critical line.
\begin{lemma}
    (Theorem 1 in \cite{davies1965}) Let $q$ be a large prime, and $\chi$ be a Dirichlet character modulo $q$. Then for any $|h| \leq 1$
    $$L(1/2+ih,\chi) = \sum_{n \leq \sqrt{q/2\pi}}\frac{\chi(n)}{n^{1/2+ih}} + \psi(1/2+it,\chi)\sum_{n \leq \sqrt{q/2\pi}}\frac{\overline{\chi(n)}}{n^{1/2-ih}} + O(q^{-1/4}),$$
    where $|\psi(1/2+it,\chi)|=1$ for any $\chi$ modulo $q$.
\label{Dirichlet_AFE}
\end{lemma}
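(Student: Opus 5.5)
The statement to prove is the approximate functional equation of Lemma \ref{Dirichlet_AFE}, taken from Davies. The plan is the standard symmetric approximate functional equation via a contour-shift argument. For a primitive character $\chi$ modulo the prime $q$ (every non-principal character mod a prime is primitive), write $\mathfrak{a}\in\{0,1\}$ for its parity and set $\Lambda(s,\chi)=(q/\pi)^{s/2}\Gamma((s+\mathfrak{a})/2)L(s,\chi)$. This satisfies $\Lambda(s,\chi)=\varepsilon(\chi)\Lambda(1-s,\overline{\chi})$ with $|\varepsilon(\chi)|=1$. Writing $s=1/2+ih$, the functional equation gives $L(s,\chi)=\psi(s,\chi)L(1-s,\overline{\chi})$ with
$$\psi(s,\chi)=\varepsilon(\chi)\big(\tfrac{q}{\pi}\big)^{1/2-s}\frac{\Gamma((1-s+\mathfrak{a})/2)}{\Gamma((s+\mathfrak{a})/2)}.$$
On the critical line $1-s=\overline{s}$, so the Gamma ratio has modulus one, and hence $|\psi(1/2+ih,\chi)|=1$. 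This is the coefficient appearing in the lemma (the $t$ there should read $h$). The principal character contributes only $O(1/q)$ to averages and can be dealt with separately or excluded.

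To obtain a sharp cutoff at $N=\sqrt{q/2\pi}$ with error $O(q^{-1/4})$, rather than a smoothed version, I would follow the Hardy--Littlewood/Siegel style argument. The first step is to compare $L(s,\chi)$ with $\sum_{n\le N}\chi(n)n^{-s}$. Write $L(s,\chi)-\sum_{n\le N}\chi(n)n^{-s}=\sum_{n>N}\chi(n)n^{-s}$, which converges conditionally for $\Re s>0$ by partial summation. Expand $\chi$ in additive characters using Gauss sums,
$$\chi(n)=\frac{1}{\tau(\overline{\chi})}\sum_{a \bmod q}\overline{\chi}(a)e(an/q).$$
This expresses the tail as a combination of Lerch-type sums $\sum_{n>N}e(an/q)n^{-s}$.

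The second step treats each such exponential sum by the van der Corput/Poisson (B-process) transformation. The phase $an/q-(h/2\pi)\log n$ has stationary points that correspond to the dual sum $\sum_{m\le N}\overline{\chi}(m)m^{s-1}$. The choice $N=\sqrt{q/2\pi}$ is the symmetric point at which $N\cdot M = q/2\pi$ with $M=N$, so the dual sum has length exactly $N$. Reassembling the Gauss sums via $\sum_a\overline{\chi}(a)e(-am/q)=\chi(-m)\tau(\overline{\chi})$-type identities produces the factor $\varepsilon(\chi)$ and the Gamma ratio; that is, it produces $\psi(s,\chi)$.

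The main obstacle is the error term. The boundary contributions at the truncation point in the stationary-phase analysis are of size roughly $N^{-1/2}\asymp q^{-1/4}$. Obtaining exactly $O(q^{-1/4})$, uniformly for $|h|\le1$, requires the Riemann--Siegel-style evaluation of the saddle integral near $n\approx N$ rather than crude bounds. Since $|h|\le 1$, the Gamma factor is essentially constant in $h$, which simplifies this step. If this proves delicate, I would instead argue from the integral representation $L(s,\chi)=\frac{1}{\Gamma(s)}\int_0^\infty x^{s-1}\frac{\sum_a \chi(a)e^{-ax}}{1-e^{-qx}}dx$, splitting the integral at the symmetric point, as Davies does. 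Since this is a published result (Theorem 1 of \cite{davies1965}), in the paper it suffices to cite it.
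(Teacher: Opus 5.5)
The paper gives no proof of this lemma; it only cites Davies. Your closing sentence therefore matches what the paper does, and your first paragraph is fine: non-principal characters mod a prime are primitive, $|\psi(1/2+ih,\chi)|=1$ because the Gamma ratio is a quotient of complex conjugates, $\chi_0$ must be excluded, and $t$ should read $h$. The self-contained argument you sketch, however, fails at its central step, and the failure is structural rather than technical.

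Stationary phase and a Riemann--Siegel evaluation near $n\approx N$ are $t$-aspect tools; they need a large parameter in the phase. Here $\phi(n)=an/q-(h/2\pi)\log n$ has $|\phi''(n)|\le 1/q$ for $n\ge N$. At $h=0$ there are no stationary points at all, yet the dual sum is still present. The dual terms actually come from the complete integrals $\int_0^\infty e(\ell x/q)x^{-s}\,dx$, i.e.\ from the Gamma factor. If you carry your Gauss-sum/Poisson computation through exactly (say $\chi$ even, $h=0$, $N=\sqrt{q/2\pi}$), you obtain
\[
\sum_{n>N}\frac{\chi(n)}{\sqrt{n}}=\frac{\tau(\chi)}{\sqrt{q}}\sum_{\ell\geq 1}\frac{\overline{\chi}(\ell)}{\sqrt{\ell}}\,W\Big(\frac{\ell}{N}\Big),\qquad W(z)=\sqrt{\frac{2}{\pi}}\int_z^\infty\frac{\cos u}{\sqrt{u}}\,du,
\]
where $W(z)=1-2\sqrt{2z/\pi}+O(z^{5/2})$ near $0$ and $W(z)=-\sqrt{2/\pi}\,z^{-1/2}\sin z+O(z^{-3/2})$ for large $z$. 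So the dual weight passes from $1$ to $0$ smoothly over a window of length $\asymp N$ in $\ell$, not over $O(1)$ terms as in the $t$-aspect. In Mellin terms: for $|t|$ large, $\psi(s+w,\chi)/\psi(s,\chi)\approx (q|t|/2\pi)^{-w}$, whose inverse transform against $N^w\,dw/w$ is a sharp cutoff. For $|h|\le 1$ the ratio is $(q/\pi)^{-w}$ times a genuine Gamma quotient, and its transform is the smooth weight $W$. So the fact that $|h|\le 1$ keeps the Gamma factor out of its Stirling regime is what destroys the sharp cutoff; it does not simplify the step, as you suggest.

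Consequently, the difference between $L(1/2,\chi)$ and your two sharp sums is $\frac{\tau(\chi)}{\sqrt q}\sum_{\ell}\overline{\chi}(\ell)\ell^{-1/2}\omega(\ell/N)$ with $\omega=W-\mathbf{1}_{[0,1]}$. This is a weighted character sum of effective length $\asymp\sqrt q$, and it is what your boundary terms add up to: each is $O(N^{-1/2})$, but one arises for every dual frequency $\ell$. By orthogonality its mean square over even characters is $\asymp\int_0^\infty|\omega(z)|^2\,dz/z\asymp 1$, with the aliased frequencies $\ell\ge q$ contributing negligibly. An error $O(q^{-1/4})$ for every $\chi$ would instead force mean square $O(q^{-1/2})$. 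So no refinement of the saddle analysis, and no choice of splitting point in an integral representation, can deliver the bound you are aiming for. The sharp symmetric formula with error $(q|t|)^{-1/4}$ is a $t$-aspect statement ($2\pi xy=q|t|$ with $|t|$ large), and that is the regime where your stationary-phase picture is correct.

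Your proposal therefore cannot be completed as a proof of the lemma as written. The same computation shows that the lemma itself, read pointwise for $|h|\le 1$, claims more than is true, so the citation should be checked against Davies's actual hypotheses. What does hold at bounded height is the exact identity above, or a smoothed approximate functional equation. Equivalently, the sharp two-sum formula holds with an error that is $O(1)$ in mean square over $\chi$, and an averaged statement of this kind would likely suffice for the way the lemma is used later in the paper.
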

Away from the central point, one then has the length of the sum we consider to be $qt$ (corresponding to $L(1/2+it,\chi)$) with the error term of $(qt)^{-1/4}$ (seen in the exercises in \cite{koukoulopoulos2019distribution}). Consequently our results could be recovered away from the central point as the parameters we choose for our results will allow for Lemma \ref{randomisation} to work. With this in hand, we want to introduce a suitable approximation when we average over all characters. We do this to shorten the Dirichlet polynomial in the above, as well as to connect more easily to our random sums. This is our first step to separate the log normal part of the Dirichlet polynomial and the log correlated part. This where we identify the hard boundary for mollifying $|L(1/2+ih,\chi)|$ directly in the mesoscopic case. No matter how we vary $\varepsilon_\theta$, we simply can't extract enough cancellation to prove meaningful results when $\theta$ is slightly larger than $-1/2$ (one can turn this into a quantitative result, but we choose against doing this for sake of the exposition).
\begin{lemma}
    Fix $\theta \in (-1/2,0]$. Then uniformly for all $|\sigma-1/2| \leq \frac{1}{\log(q)}$ and $|h| \leq \log^{\theta}(q)$
    \begin{equation}
        \mathbb{E}^{\text{char}}\big|\sum_{n \leq \sqrt{q/2\pi}}\frac{\chi(n)}{n^{\sigma+ih}}-\sum_{\substack{m \leq q^{\varepsilon_\theta} \\ P^+(m) \leq P_\theta}}\frac{\chi(m)}{m^{1/2+ih}}\sum_{\substack{n \leq q^{1/2-2\varepsilon_\theta} \\ P^-(n) > P_\theta}}\frac{\chi(n)}{n^{\sigma+ih}}\big|^2 \ll \frac{\log^{1+\theta}(q)}{(\log\log(q))^2}.  \label{factored_eqn}
    \end{equation} \label{factored_form}
\end{lemma}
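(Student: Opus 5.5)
The plan is to decompose each $n \le \sqrt{q/2\pi}$ uniquely as $n = mn'$, with $m$ being $P_\theta$-smooth ($P^+(m)\le P_\theta$) and $n'$ being $P_\theta$-rough ($P^-(n')>P_\theta$), and then to compute the mean square over characters by orthogonality. The product $\sum_m \sum_{n'}$ in (\ref{factored_eqn}) only involves integers $mn' \le q^{\varepsilon_\theta}q^{1/2-2\varepsilon_\theta} < \sqrt{q/2\pi}$. Each such integer arises from exactly one pair $(m,n')$, so the difference is a single Dirichlet polynomial $\sum_{n<q} b(n)\chi(n)$. Its coefficients split into three pieces.

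\emph{(a)} The first piece is $b(n)=n^{-\sigma-ih}$ for those $n\le\sqrt{q/2\pi}$ whose smooth part exceeds $q^{\varepsilon_\theta}$. \emph{(b)} The second is $b(n)=n^{-\sigma-ih}$ for those $n$ with $m\le q^{\varepsilon_\theta}$ but $n'>q^{1/2-2\varepsilon_\theta}$. \emph{(c)} The third is the exponent mismatch $b(n)=m^{-ih}n'^{-\sigma-ih}(m^{1/2-\sigma}-1)$ on the product terms.

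Since all these $n$ are $<q$, distinct $n$ are distinct mod $q$. Hence
\[
\mathbb{E}^{\text{char}}\Big|\sum_{n<q}b(n)\chi(n)\Big|^2 \ll \sum_n |b(n)|^2 ,
\]
and $n^{1-2\sigma}\ll 1$ because $|\sigma-1/2|\le 1/\log q$. It therefore suffices to bound $\sum 1/n$ over each piece, up to the factor $\varepsilon_\theta^2$ in (c). Throughout, $\log P_\theta=\log^{-\theta}(q)$ and $\varepsilon_\theta\log q=\log^{1+\theta}(q)/(\log\log q)^2$.

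For (a), I bound the sum by
\[
\Big(\sum_{m>q^{\varepsilon_\theta},\,P^+(m)\le P_\theta}\frac1m\Big)\Big(\sum_{n'\le q,\,P^-(n')>P_\theta}\frac1{n'}\Big).
\]
The second factor is $\ll \log q/\log P_\theta$. For the first I use Rankin's trick with exponent $\alpha=1/\log P_\theta$. This gives $\ll \log P_\theta\cdot \exp(-\varepsilon_\theta\log q/\log P_\theta) = \log P_\theta\exp(-\log^{1+2\theta}(q)/(\log\log q)^2)$. Since $1+2\theta>0$, this decays faster than any power of $\log\log q$, which is negligible.

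For (c), $|m^{1/2-\sigma}-1|\ll \log m/\log q\le\varepsilon_\theta$. The piece is therefore $\ll \varepsilon_\theta^2\log P_\theta\cdot\log q/\log P_\theta$, which is far smaller than the target bound.

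The main term is (b), and it is where I expect the real work to be. I bound it by
\[
\Big(\sum_{P^+(m)\le P_\theta}\frac1m\Big)\cdot\sum_{\substack{q^{1/2-2\varepsilon_\theta}<n'\le q^{1/2}\\ P^-(n')>P_\theta}}\frac1{n'}.
\]
The first factor is $\ll\log P_\theta$ by Mertens. For the second I need a sieve upper bound for rough numbers in a multiplicative window,
\[
\sum_{\substack{X<n\le Y\\ P^-(n)>P}}\frac1n \ll \frac{\log(Y/X)}{\log P}\qquad\text{whenever } \log(Y/X)\ge\log P,\ X\ge P .
\]
I would prove this by dyadic decomposition together with the Brun (or Selberg) sieve bound $\#\{n\le x: P^-(n)>P\}\ll x/\log P$ for $x\ge P$.

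The hypothesis of this sieve bound holds here precisely because $\theta>-1/2$. Indeed, $\log(Y/X)=2\varepsilon_\theta\log q = 2\log^{1+\theta}(q)/(\log\log q)^2$, and this exceeds $\log P_\theta=\log^{-\theta}(q)$. That is exactly the statement $1+2\theta>0$ up to the $(\log\log q)^2$ loss. This step is the delicate one, and it is where the range of $\theta$ enters.

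Multiplying the two factors gives
\[
\log P_\theta\cdot\frac{\log^{1+\theta}(q)}{(\log\log q)^2\log P_\theta}=\frac{\log^{1+\theta}(q)}{(\log\log q)^2}.
\]
Combining (a), (b) and (c) yields (\ref{factored_eqn}), uniformly in $|h|\le\log^\theta(q)$ and $|\sigma-1/2|\le 1/\log q$, since $h$ only enters through unimodular factors.
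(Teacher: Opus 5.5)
Your argument is correct and follows the paper's skeleton: orthogonality for a polynomial of length $<q$, the smooth/rough factorisation $n=mn'$, Rankin's trick with exponent $1/\log P_\theta$ when the smooth part exceeds $q^{\varepsilon_\theta}$, and a Taylor expansion of $m^{1/2-\sigma}$ for the exponent mismatch. The paper separates the mismatch with the triangle inequality first. Your folding of everything into one polynomial with disjointly supported coefficients is slightly cleaner.

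The genuine difference is piece (b), which you treat as the main difficulty and handle with a sieve bound for rough numbers in a multiplicative window. The paper uses no sieve there. Every $n$ in (b) satisfies $n=mn'>q^{1/2-2\varepsilon_\theta}$, so the piece is at most $\sum_{q^{1/2-2\varepsilon_\theta}<n\le q^{1/2}}1/n\ll\varepsilon_\theta\log q=\log^{1+\theta}(q)/(\log\log q)^2$. That is already the target bound. Your sieve estimate gives exactly the same order, since the factor $\log P_\theta$ from $\sum_m 1/m$ cancels the $1/\log P_\theta$ density of rough numbers, so it buys nothing.

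Relatedly, your claim that (b) is where $\theta>-1/2$ enters is mistaken. Dyadic decomposition (or partial summation) with $\#\{n\le x:P^-(n)>P\}\ll x/\log P$ gives $\ll(1+\log(Y/X))/\log P$ for all $Y>X\ge P$; the hypothesis $\log(Y/X)\ge\log P$ is not needed. The range of $\theta$ is used only in (a), where $\varepsilon_\theta\log q/\log P_\theta=\log^{1+2\theta}(q)/(\log\log q)^2$ must tend to infinity. In (a) you should also say that $\exp(-\log^{1+2\theta}(q)/(\log\log q)^2)$ decays faster than any power of $\log q$, not merely of $\log\log q$. It must absorb the factor $\log q/\log P_\theta$ from the rough sum. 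That factor is as large as $\log^{1+\theta}(q)$, while the target is only $\log^{1+\theta}(q)/(\log\log q)^2$.

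Finally, a typo in (c): the coefficient is $m^{-1/2-ih}n'^{-\sigma-ih}(m^{1/2-\sigma}-1)$. You dropped the $m^{-1/2}$, although your estimate for (c) implicitly uses it.
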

\begin{proof}
    We use the triangle inequality to handle the lack of shift in the smooth part of the sum. This means the left hand side of equation (\ref{factored_eqn}) satisfies
    \begin{align*}
        \ll& \mathbb{E}^{\text{char}}\big|\sum_{n \leq \sqrt{q/2\pi}}\frac{\chi(n)}{n^{\sigma+ih}}-\sum_{\substack{m \leq q^{\varepsilon_\theta} \\ P^+(m) \leq P_\theta}}\frac{\chi(m)}{m^{\sigma+ih}}\sum_{\substack{n \leq q^{1/2-2\varepsilon_\theta} \\ P^-(n) > P_\theta}}\frac{\chi(n)}{n^{\sigma+ih}}\big|^2 \\
        &+ \mathbb{E}^{\text{char}}\big|\sum_{\substack{n \leq q^{1/2-2\varepsilon_\theta} \\ P^-(n) > P_\theta}}\frac{\chi(n)}{n^{\sigma+ih}}\big|^2\mathbb{E}^{\text{char}}\big|\sum_{\substack{m \leq q^{\varepsilon_\theta} \\ P^+(m) \leq P_\theta}}\frac{\chi(m)}{m^{\sigma+ih}}-\sum_{\substack{m \leq q^{\varepsilon_\theta} \\ P^+(m) \leq P_\theta}}\frac{\chi(m)}{m^{1/2+ih}}\big|^2.
    \end{align*}
    We achieve the second term by using the orthogonality of Dirichlet characters. For the first term, we are counting the numbers that are between $q^{1/2-2\varepsilon_\theta}$ and numbers with a $P$-smooth part which is greater than $q^{\varepsilon_\theta}$. Then by Lemma \ref{even_moment} (or directly by orthogonality), the first term is bounded by
    \begin{align*}
        &\sum_{q^{1/2-2\varepsilon_\theta} \leq n \leq q^{1/2}}  \frac{1}{n^{2\sigma}}+\sum_{\substack{q^{\varepsilon_\theta}< m \leq q^{1/2-2\varepsilon_\theta} \\ P^+(n) \leq P_\theta}}\frac{1}{m^{2\sigma}}\sum_{\substack{n \leq q^{1/2-2\varepsilon_\theta} \\ P^-(n) > P_\theta}}\frac{1}{n^{2\sigma}} \\
        \ll& \varepsilon_\theta\log(q) + \frac{\log(q)}{\log(P_\theta)}\sum_{\substack{ m > q^{\varepsilon_\theta} \\ P^+(n) \leq P_\theta}}\frac{1}{m^{2\sigma}} \\ \ll& \log(q)(\varepsilon_\theta+e^{-\frac{\varepsilon_\theta\log(q)}{\log(P_\theta)}}) \ll \frac{\log^{1+\theta}(q)}{(\log\log(q))^2} + \log(q)\exp\big(-\frac{\log^{1+2\theta}(q)}{(\log\log(q))^2}\big).
    \end{align*}
    The final line is achieved by using Rankin's trick to bound the contribution of the smooth numbers by $\log(P_\theta)e^{-\frac{\varepsilon_\theta\log(q)}{\log(P_\theta)}}$ and by noting $\theta > -1/2$. As for the second term, this reduces to bounding
    $$\frac{\log(q)}{\log(P_\theta)}\sum_{\substack{m \leq q^{\varepsilon_\theta} \\ P^+(m) \leq P_\theta}}\frac{|m^{\sigma-1/2}-1|^2}{m}.$$
    Observing that $|\sigma-1/2| \leq \frac{1}{\log(q)}$, then $|e^{\log(m)|\sigma-1/2|}-1| \leq \frac{\log(m)}{\log^{1+\theta}(q)}$ by Taylor expansion, so the contribution of the second term is $\ll \varepsilon_\theta^2\log^{1+\theta}(q)$.
\end{proof}
To handle the contribution of log-correlated part of the Dirichlet polynomial, we will need to further mollify $\sum_{\substack{n \leq q^{1/2-2\varepsilon_\theta} \\ P^{-}(n) > P_\theta}}\frac{\chi(n)}{q^{1/2+ih}}$ in a suitable sense. This is just an altered version of Lemma \ref{factored_form}, and consequently its proof is omitted as it is an application of orthogonality. This lemma holds in its stated form, or when the $\chi$ are replaced by Steinhaus random multiplicative functions $f(n)$.
\begin{lemma}
    Fix $\theta \in (-1/2,0]$. Suppose $P > P_\theta$ and $\min\{\frac{1}{\log(q)},\varepsilon_\theta\} < \varepsilon_1 \leq \frac{1}{10}$ say, then we have uniformly for $|\sigma-1/2| \leq \frac{1}{\log(q)}$ and $|h| \leq \log^{\theta}(q)/2$
    \begin{equation*}
    \mathbb{E}^{\text{char}}\big|\sum_{\substack{n \leq q^{1/2-2\varepsilon_\theta} \\ P^{-}(n) >P_\theta}}\frac{\chi(n)}{n^{\sigma+ih}}-\sum_{\substack{m \leq q^{\varepsilon} \\ p|m \Rightarrow P_\theta < p \leq P}}\frac{\chi(m)}{m^{1/2+ih}}\sum_{\substack{n \leq q^{1/2-2\varepsilon} \\ P^{-}(n) > P}}\frac{\chi(n)}{n^{\sigma+ih}}\big|^2 \ll \log^{1+\theta}(q)\big(\varepsilon+e^{-\frac{\varepsilon\log(q)}{\log(P)}}\big),
    \end{equation*}
    where $\varepsilon = \varepsilon_\theta+\varepsilon_1$. \label{logcor_factored_form}
\end{lemma}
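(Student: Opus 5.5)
The plan mirrors the proof of Lemma \ref{factored_form}. First we separate off the shift in $\sigma$ on the new smooth factor, and then we bound the remaining difference by orthogonality and the divisor-type estimate of Lemma \ref{even_moment}. Write
\[
A(\chi)=\sum_{\substack{n \leq q^{1/2-2\varepsilon_\theta} \\ P^-(n)>P_\theta}}\frac{\chi(n)}{n^{\sigma+ih}},\quad
S_\sigma(\chi)=\sum_{\substack{m \leq q^{\varepsilon} \\ p|m \Rightarrow P_\theta<p\leq P}}\frac{\chi(m)}{m^{\sigma+ih}},\quad
R(\chi)=\sum_{\substack{n \leq q^{1/2-2\varepsilon} \\ P^-(n)>P}}\frac{\chi(n)}{n^{\sigma+ih}}.
\]
By the triangle inequality, the left-hand side is
\[
\ll \mathbb{E}^{\text{char}}|A-S_\sigma R|^2+\mathbb{E}^{\text{char}}|(S_\sigma-S_{1/2})R|^2 .
\]
All the products $mn$ involved are at most $q^{1/2}$, so every product of two terms is $<q$. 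Orthogonality therefore applies exactly, and the same computation works verbatim with Steinhaus $f(n)$ in place of $\chi(n)$.

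\textbf{Term 1.} Every $n$ with $P^-(n)>P_\theta$ factors uniquely as $n=m n'$, where $m$ is composed of primes in $(P_\theta,P]$ and $P^-(n')>P$. Hence $A-S_\sigma R$ is a signed sum of $\chi(n)n^{-\sigma-ih}$ over those $n=mn'$ for which exactly one of the following holds: $n\le q^{1/2-2\varepsilon_\theta}$, or ($m\le q^{\varepsilon}$ and $n'\le q^{1/2-2\varepsilon}$). Each such $n$ appears at most once, so after orthogonality the mean square is at most the sum of $1/n^{2\sigma}\asymp 1/n$ over these $n$, all with $P^-(n)>P_\theta$. There are two cases.
\begin{itemize}
\item[(a)] $n\in(q^{1/2-2\varepsilon},q^{1/2-2\varepsilon_\theta}]$. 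The sieve bound $\sum_{y<n\le y^{1+\eta},\,P^-(n)>P_\theta} 1/n\ll \eta\log y/\log P_\theta$ contributes $\ll \varepsilon\log q/\log P_\theta=\varepsilon\log^{1+\theta}(q)$.
\item[(b)] $m>q^{\varepsilon}$. This gives
\[
\sum_{n'\le q^{1/2},\,P^-(n')>P}\frac1{n'}\sum_{m>q^{\varepsilon},\,p|m\Rightarrow p\le P}\frac1m
\ll \frac{\log q}{\log P}\cdot \log P\, e^{-\varepsilon\log q/\log P}
\]
by Rankin's trick with exponent $1-1/\log P$. The sum over $m$ is restricted to primes greater than $P_\theta$, so in fact it is $\ll (\log P/\log P_\theta)e^{-\varepsilon\log q/\log P}$, and the total is $\ll \log^{1+\theta}(q)\,e^{-\varepsilon\log q/\log P}$.
\end{itemize}

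\textbf{Term 2.} Since $S$ and $R$ are supported on coprime sets of primes and all products are below $q$, orthogonality factorises the mean square as $\mathbb{E}|S_\sigma-S_{1/2}|^2\cdot\mathbb{E}|R|^2$. The second factor is $\ll \log q/\log P$. The first factor is
\[
\sum_{m\le q^\varepsilon,\,p|m\Rightarrow P_\theta<p\le P}\frac{|m^{1/2-\sigma}-1|^2}{m}\ll \frac{\varepsilon^2\log^2 q}{\log^2 q}\cdot\frac{\log P}{\log P_\theta},
\]
using $|m^{1/2-\sigma}-1|\ll \log m/\log q\le\varepsilon$. The product is $\ll\varepsilon^2\log^{1+\theta}(q)\le \varepsilon\log^{1+\theta}(q)$.

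The main obstacle is the bookkeeping in Term 1. One has to check that the symmetric-difference description is exact, so that no divisor weight $\tilde d$ enters; if it did, we would lose a factor of $\log P$. One also has to make sure the smooth-number tail retains the $1/\log P_\theta$ saving coming from the restriction $p>P_\theta$. If the saving in the Rankin step is only $\log P/\log P_\theta$ rather than better, it is still acceptable, because $\log P/\log P_\theta$ is absorbed by the factor $\log q/\log P$ multiplying it.
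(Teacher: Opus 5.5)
Your proposal is correct and follows the route the paper intends: the paper omits this proof as the orthogonality variant of Lemma \ref{factored_form} (peel off the $\sigma$-shift on the smooth factor, then count via orthogonality, using a short-interval bound and Rankin's trick). Your use of the rough-number sieve bound is what gives $\varepsilon\log^{1+\theta}(q)$ rather than $\varepsilon\log q$.

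There is one small bookkeeping slip in Term 1. You omit the $n=mn'$ coming from $S_\sigma R$ with $n>q^{1/2-2\varepsilon_\theta}$. These exist whenever $\varepsilon_1<\varepsilon_\theta$, which the hypotheses allow, since only $\varepsilon_1>1/\log q$ is assumed. Such $n$ satisfy $n\le q^{1/2-\varepsilon}$, so enlarging case (a) to $n\in(q^{1/2-2\varepsilon},q^{1/2}]$ covers them with the same bound $\ll\varepsilon\log^{1+\theta}(q)$.
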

The next result is needed only for the proof of Theorem \ref{short_range_max}, which is on a conditional fourth moment estimate. This effectively comes down to saying that $L(1/2+ih,\chi)$ is large, then $\sum_{\substack{m \leq q^\varepsilon \\ P^+(m) \leq P}}\frac{\chi(m)}{m^{1/2+ih}}$ must be large too. We let
$$\mathcal{H}_\chi(h;V) := \big\{\big|\sum_{\substack{m \leq q^\varepsilon \\ P^+(m) \leq P}}\frac{\chi(m)}{m^{1/2+ih}}\big| \leq \frac{\log(P)}{V}\big\}.$$
\begin{lemma}
    Uniformly for large $ P \leq \sqrt{q}, \ 0 <\varepsilon < 1/10$ with $V>0$ and $|\sigma-1/2| \leq 1/\log(q)$, we have
    $$\mathbb{E}^{\text{char}}\big(\mathbf{1}_{\mathcal{H}_\chi(h;V)}(h)\big|\sum_{\substack{m \leq q^\varepsilon \\ P^+(m) \leq P}}\frac{\chi(m)}{m^{1/2+ih}}\sum_{\substack{n \leq q^{1/2-2\varepsilon} \\ P^{-}(n) > P}}\frac{\chi(n)}{n^{\sigma+it}}\big|^4\big) \ll \frac{1}{V^2}\frac{\log^4(q)}{\log(P)}.$$ \label{conditional_fourth}
\end{lemma}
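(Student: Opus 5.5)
The plan is to use the defining condition of $\mathcal{H}_\chi(h;V)$ to trade two of the four powers of the smooth polynomial for the deterministic factor $(\log(P)/V)^2$. The remaining quantity is then a genuine second moment of a single Dirichlet polynomial of length $<q$, which orthogonality evaluates exactly. Write
$$S(\chi)=\sum_{\substack{m \leq q^\varepsilon \\ P^+(m) \leq P}}\frac{\chi(m)}{m^{1/2+ih}}, \qquad R(\chi)=\sum_{\substack{n \leq q^{1/2-2\varepsilon} \\ P^{-}(n) > P}}\frac{\chi(n)}{n^{\sigma+ih}}$$
(the $it$ in the statement is read as $ih$; nothing depends on this). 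On $\mathcal{H}_\chi(h;V)$ we have $|S(\chi)|^4\le |S(\chi)|^2(\log(P)/V)^2$. Dropping the indicator afterwards, since everything is nonnegative, gives
$$\mathbb{E}^{\text{char}}\big(\mathbf{1}_{\mathcal{H}_\chi(h;V)}|S(\chi)R(\chi)|^4\big) \le \frac{\log^2(P)}{V^2}\,\mathbb{E}^{\text{char}}|S(\chi)R(\chi)^2|^2 .$$
It therefore suffices to show $\mathbb{E}^{\text{char}}|S R^2|^2 \ll \log^4(q)/\log^3(P)$.

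Next, expand $S(\chi)R(\chi)^2=\sum_{N}c(N)\chi(N)$, where $N=mk_1k_2$ with $m$ $P$-smooth and $k_1,k_2$ $P$-rough. Every such $N$ is at most $q^{\varepsilon}q^{1-4\varepsilon}<q$, so distinct $N$ are incongruent mod $q$. Orthogonality (equivalently Lemma \ref{even_moment} with $k=0$) then gives exactly $\sum_N |c(N)|^2$, with no off-diagonal terms. Because the factorisation of $N$ into its $P$-smooth and $P$-rough parts is unique, $m$ is determined by $N$. The representations $N=mk_1k_2$ are thus indexed by factorisations $k=k_1k_2$ of the rough part, so
$$|c(N)|\le \frac{d(k)}{m^{1/2}k^{\sigma}}.$$
Hence
$$\mathbb{E}^{\text{char}}|SR^2|^2 \le \sum_{\substack{m \leq q^\varepsilon\\ P^+(m)\le P}}\frac{1}{m}\sum_{\substack{k\le q\\ P^-(k)>P}}\frac{d(k)^2}{k^{2\sigma}} .$$

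It remains to bound the two sums by Euler products.

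1. The smooth sum is at most $\prod_{p\le P}(1-1/p)^{-1}\ll \log(P)$ by Mertens.
2. In the rough sum, $|\sigma-1/2|\le 1/\log(q)$ and $k\le q$ give $k^{-2\sigma}\le e^{2}/k$. The sum is then at most $\prod_{P<p\le q}\big(1+\frac{4}{p}+O(p^{-2})\big)\ll(\log(q)/\log(P))^4$, again by Mertens.

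Multiplying, $\mathbb{E}^{\text{char}}|SR^2|^2\ll \log^4(q)/\log^3(P)$. Combined with the prefactor $\log^2(P)/V^2$ this yields $V^{-2}\log^4(q)/\log(P)$, as claimed.

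I expect the only delicate step to be the coefficient bookkeeping in the second paragraph. One must check that the length stays below $q$, which needs $\varepsilon<1/10$ only mildly. One must also check that the divisor-type weight lives only on the rough part, so that the smooth sum contributes $\log(P)$ rather than $\log^4(P)$; this is exactly where the saving $1/\log(P)$ comes from. The shift in $\sigma$ is harmless because all sums are restricted to $n\le q$.
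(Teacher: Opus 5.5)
Your proof is correct and follows the same route as the paper. You use the defining bound of $\mathcal{H}_\chi(h;V)$ to replace two powers of the smooth polynomial by $(\log(P)/V)^2$, evaluate the second moment of the smooth sum times the square of the rough sum (length $<q$) exactly by orthogonality, and bound the smooth sum by $\log(P)$ and the divisor-weighted rough sum by $(\log(q)/\log(P))^4$ via Mertens. Your bookkeeping through the unique smooth/rough factorisation of $N$ is a cleaner rendering of the paper's compressed step. There the paper uses $d(n)^2\le d_4(n)$ and $\prod_{P<p\le q}(1-4/p)^{-1}$, where you bound the Euler factor $1+4/p+O(p^{-2})$ directly.
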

\begin{proof}
    Using the conditioning given by the event $\mathcal{H}_\chi(h;V)$, we can upper bound the left hand side by
    \begin{align*}
        &\big(\frac{\log(P)}{V}\big)^2\mathbb{E}^{\text{char}}\big(\big|\sum_{\substack{m \leq q^\varepsilon \\ P^+(m) \leq P}}\frac{\chi(m)}{m^{1/2+ih}}\big|^2\big|\sum_{\substack{n \leq q^{1/2-2\varepsilon} \\ P^{-}(n) > P}}\frac{\chi(n)}{n^{\sigma+it}}\big|^4\big) \\
        \ll& \big(\frac{\log(P)}{V}\big)^2\mathbb{E}^{\text{char}}\big(\big|\sum_{\substack{m \leq q^\varepsilon \\ P^+(k) \leq P}}\frac{\chi(m)}{m^{1/2+ih}}\big|^2\big)\big|\sum_{\substack{n \leq q^{1-4\varepsilon} \\ P^{-}(n) > P}}\frac{c(n)}{n^{2\sigma}}\big|^2,
    \end{align*}
    where $c(n) := \#\{(d_1,d_2): d_1,d_2 \leq q^{1/2-2\varepsilon},  n=d_1d_2\}$ (where we have applied Lemma \ref{even_moment} to the rough sum due to orthogonality of the characters). Then Lemma \ref{even_moment} implies that this
    $$\ll \big(\frac{\log(P)}{V}\big)^2\sum_{P^+(m) \leq P}\frac{1}{m}\sum_{\substack{n \leq q^{1-4\varepsilon} \\ P^-(n) > P}}\frac{(c(n))^2}{n} \ll \frac{1}{V^2}\log^3(P)\sum_{\substack{n \leq q^{1-4\varepsilon} \\ P^-(n) > P}}\frac{d_4(n)}{n}$$
    where $d_k(n) = \#\{(d_1,...,d_k):\prod_{i=1}^kd_i=n\}$ the standard $k$-fold divisor function (we used that $(d_2(n))^2 \leq d_4(n)$ here). The final term can be bounded \\ by $\prod_{P < p \leq q^{1-4\varepsilon}}\big(1-\frac{4}{p}\big)^{-1} \ll \big(\frac{\log(q)}{\log(P)}\big)^4$, which yields the claim.  
\end{proof}
\subsection{Probabilistic results} \label{prob_heur}
While we will not explicitly use these results in this paper (since we state the adaptations we need to make to Lemmas 4 and 7 in \cite{harper2019partition} and treat them as blackboxes), we will still state the two probabilistic results which are the heavy machinery behind much of the work going on in this paper. The first of which is behind a lot of the proof of Theorems \ref{pf_moments_constant} and \ref{pf_moments_meso}.
\begin{lemma}
    (Probability Result 1, \cite{aharper201}) Let $a\geqslant 1$. For any sufficiently large integer $n > 1$, let $(G_k)_{k=1}^n$ be a sequence of independent real Gaussian random variables with mean $0$ and variance between $1/20$ and $20$, say. Suppose $h$ is a function such that $|h(j)| < 10\log(j)$. Then
    $$\Prob(\sum_{m=1}^j G_m \leqslant a + h(j),  \ \forall \ 1 \leqslant j \leqslant n ) \asymp \min\big\{\frac{a}{\sqrt{n}},1\big\}.$$ \label{ballot}
\end{lemma}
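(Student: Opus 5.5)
The plan is to reduce to the classical ballot estimate for a flat barrier, then show that a barrier perturbed by $O(\log j)$ changes the probability only by a constant factor. Write $S_j=\sum_{m\le j}G_m$ and $t_j=\sum_{m\le j}\mathrm{Var}(G_m)$, so that $j/20\le t_j\le 20j$. By Skorokhod embedding (here simply the independent-increment property of Gaussians) we may realise $S_j=B(t_j)$ for a standard Brownian motion $B$.

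\textbf{Flat barrier.} For $h\equiv 0$ the reflection principle gives
$$\Prob\big(\max_{t\le t_n}B(t)\le a\big)\asymp \min\{a/\sqrt n,1\}.$$
This is immediately a lower bound for the discrete event $\{S_j\le a\ \forall j\le n\}$.

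For the matching upper bound we must handle excursions of $B$ between the sampling times. Conditional on the values $B(t_j)$, the path on $[t_{j-1},t_j]$ is a Brownian bridge of duration at most $20$. Its excess over $\max(B(t_{j-1}),B(t_j))$ therefore has a Gaussian tail with bounded variance.

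I will use a last-exit argument: with positive probability, independent of $n$, the overshoot above $a$ in any excursion is at most $1$. Concretely, condition on the discrete path. On the event that the bridges exceed $a+1$ nowhere, we have $\max_t B\le a+1$, which has probability $\ll (a+1)/\sqrt n\ll a/\sqrt n$ since $a\ge1$. The conditional probability of that event is bounded below only after also controlling the number of times the discrete walk is near $a$. For that I will instead use the standard fact for walks with uniformly bounded variance ratios, namely the ballot theorem via Sparre Andersen/Kozlov. Its proof carries over verbatim to non-identical Gaussian steps by time-change.

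\textbf{Upper bound for $h(j)\le 10\log j$.} I will decompose according to the dyadic block $[2^k,2^{k+1})$ in which the walk first exceeds $a+C$ for a large constant $C$. The case of no such exceedance is the flat bound with $a+C$. On the event that the first exceedance occurs in block $k$, I apply the Markov property at time $2^k$. Up to time $2^k$ the walk stays below $a+C$, which costs $\ll (a+C)/2^{k/2}$ by the flat bound. Moreover, at time $2^k$ the walk sits at height $a-y$ with a density of order $y/2^{k}$ (the entropic-repulsion density of the flat-barrier conditioned walk, obtainable from the flat estimate applied on two halves). From there it must travel the remaining $n-2^k$ steps below $a+10(\ell+1)\log 2$ in each later block $\ell$. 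By the flat bound with starting distance $y+O(k)$, this costs $\ll (y+k)/\sqrt{n-2^k}$.

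Integrating over $y$ and summing over $k$ gives
$$\sum_k \frac{a+k}{2^{k/2}}\cdot\frac{O(1)\cdot(\text{poly}(k))}{\sqrt n}\ll \frac a{\sqrt n}.$$
The geometric decay in $k$ beats the polynomial factors. Near $k\approx\log_2 n$ I will run the same argument from the end of the walk, using time reversal to treat the block near time $n$.

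\textbf{Lower bound for $h(j)\ge-10\log j$.} I will require the walk to stay below $a-1$ up to a constant time $j_0$, then to fall to height $\le a-j^{1/3}$ and remain below $a-j^{1/3}$ for $j_0\le j\le n/2$. For $j\ge j_0$ we have $j^{1/3}\ge 10\log j$, so this event implies the required one on that range. The final stretch $j\in[n/2,n]$ is handled by time reversal with the symmetric curve. A standard estimate shows that a walk conditioned to stay below a flat barrier avoids the $j^{1/3}$ curve with conditional probability bounded below; this again follows from the flat ballot bound summed over dyadic blocks. Hence the probability of the whole event is $\gg \min\{a/\sqrt n,1\}$.

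\textbf{Main obstacle.} I expect the main obstacle to be the upper bound near the two ends, where the $\log j$ perturbation is comparable to $a$ or to $\sqrt{n-j}$. In particular, obtaining the density estimate $y/2^k$ uniformly for non-identical variances is the delicate step. I would first try to get it by sandwiching the discrete walk between Brownian motion barriers shifted by $\pm1$ and using the explicit Brownian formulas.
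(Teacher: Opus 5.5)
The paper does not prove this lemma. It is quoted from Harper \cite{aharper201} and used only as a black box, so your argument has to stand on its own. The flat-barrier reduction and the lower-bound strategy are broadly sound, but the upper bound for the rising barrier $h(j)\le 10\log j$ has a genuine gap.

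First, after time $2^k$ the barrier does not stay at $a+O(k)$. On block $\ell>k$ it is $a+10(\ell+1)\log 2$, rising to $a+10\log n$. So ``the flat bound with starting distance $y+O(k)$'' does not dominate the survival probability, and applying the flat bound at the top level instead costs a factor $\log n$. Second, and more seriously, even granting that step, the integration is wrong: integrating $(y+k)/\sqrt n$ against the density $y/2^k$ on $y$ up to order $2^{k/2}$ gives $\asymp 2^{k/2}/\sqrt n$, not $\mathrm{poly}(k)/\sqrt n$. Block $k$ therefore contributes $\asymp (a+C)/\sqrt n$ with no decay, and summing over the $\asymp\log n$ blocks loses a factor $\log n$.

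This loss is built into the decomposition. Once you discard the fact that the walk actually reaches $a+C$ inside block $k$, what remains is $\{S_j\le a+C\ \forall j\le 2^k\}\cap\{S_j\le a+h(j)\ \forall j>2^k\}$. When $h\ge 0$ this contains $\{S_j\le a\ \forall j\le n\}$, so every term of your sum is $\gg a/\sqrt n$.

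The missing idea is to apply the strong Markov property at the stopping time $\tau$ of first passage above $a+C$, not at the deterministic time $2^k$.
\begin{itemize}
\item On $\{\tau\in[2^k,2^{k+1})\}$ intersected with the event, $S_\tau$ lies within $10(k+1)\log 2$ of the barrier.
\item Since $10\log(\tau+m)-10\log\tau\le 10\log(1+m)$, the remaining problem has the same form, with $a'\ll k$ and $n'=n-\tau$.
\item Prove $\mathbb{P}\le K\min\{1,a/\sqrt n\}$ by induction on $n$, for the worst case $h(j)=10\log j$.
\item A block with $2^{k+1}\le n/2$ then contributes $\ll \frac{a+C}{2^{k/2}}\cdot\frac{Kk}{\sqrt n}$, with implied constant independent of $K$ and $C$.
\item The last $O(1)$ blocks contribute $\ll (a+C)/\sqrt n$ trivially.
\item Only blocks with $k\ge k_0\asymp C$ can occur, so $(1+C)\sum_{k\ge k_0}k2^{-k/2}$ is small for $C$ large, and the induction closes.
\end{itemize}
This is exactly the first-violation mechanism you use correctly in your lower bound. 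There no induction is needed because the barrier after the stopping time is flat.

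Three smaller repairs are also needed.
\begin{itemize}
\item Sparre Andersen and Kozlov rely on exchangeable increments, which unequal variances destroy, so their proofs do not carry over verbatim. For the flat upper bound, use optional stopping for $S_{j\wedge T}$ with $T=\min\{j:S_j>a\}$. Bounded Gaussian overshoot gives $\mathbb{E}[(a-S_n)\mathbf{1}_{T>n}]\le a+O(1)$. Combine this with $\mathbb{P}(T>n,\ a-S_n\le \eta\sqrt n)\ll \eta\,\mathbb{P}(T>n/2)$ and a recursion in $n$.
\item In the lower bound you need $S_j\le a-10\log j_0$, not $a-1$, for $2\le j\le j_0$. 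Fix $j_0$ from the flat-bound constants first, then pay a $j_0$-dependent constant for this stretch.
\item On $[n/2,n]$ the barrier is still about $a-10\log n$, so a time-reversed curve returning to height $a$ does not imply the event. Use a flat barrier at $a-(n/2)^{1/3}$ there, plus the fact that the conditioned walk at time $n/2$ is $\gg\sqrt n$ below the barrier with probability bounded below.
\end{itemize}
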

The second is a more specialised result which is designed for finding unusually large values.
\begin{lemma}
    (Probability Result 1, \cite{harper2019partition}) Let $a,b$ and $n \in \mathbb{N}$ be sufficiently large, with $(G_k)_{k \leq n}$ be a sequence of independent Gaussian random variables with mean $0$ and variance between $1/20$ and $20$ (say). Then we have the uniform upper bound
    $$\mathbb{P}(\sum_{k=1}^jG_k \leq a \ \forall1 \leq j \leq n, \text{ and } a-b \leq \sum_{k=1}^nG_k \leq a) \ll \min\big\{1,\frac{a}{\sqrt{n}}\big\}(\min\big\{1,\frac{b}{\sqrt{n}}\big\})^2.$$
\end{lemma}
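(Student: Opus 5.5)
The plan is to split the walk into three blocks of roughly equal length and bound the probability by a product of three factors: the first block gives $\min\{1,a/\sqrt{n}\}$ via Lemma \ref{ballot}, the last block read backwards gives $\min\{1,b/\sqrt{n}\}$ via Lemma \ref{ballot}, and the middle block gives a second factor $\min\{1,b/\sqrt{n}\}$ from a Gaussian density bound. Write $S_j=\sum_{k\le j}G_k$ and $m=\lfloor n/3\rfloor$.

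\textbf{Step 1: enlarge the event to one built from three independent blocks.} The event in the statement is contained in the intersection of three events.
\begin{enumerate}
\item $A_1=\{S_j\le a \ \forall j\le m\}$. This depends only on $G_1,\dots,G_m$.
\item $A_3=\{T_j\ge -b \ \forall j\le m\}$, where $T_j:=S_n-S_{n-j}=\sum_{k=n-j+1}^{n}G_k$ is the time-reversed walk of the last block. Indeed, on our event $S_{n-j}\le a$ and $S_n\ge a-b$, so $T_j\ge S_n-a\ge -b$. This depends only on $G_{n-m+1},\dots,G_n$.
\item $A_2=\{S_n\in[a-b,a]\}$. Here we simply drop the barrier constraint on the middle block, which is allowed since we only need an upper bound.
\end{enumerate}

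\textbf{Step 2: bound the two outer blocks.} By Lemma \ref{ballot} applied with $h\equiv0$ (and $a\ge1$), $\mathbb{P}(A_1)\ll\min\{1,a/\sqrt{m}\}\asymp\min\{1,a/\sqrt{n}\}$. Applying Lemma \ref{ballot} to the walk $-T_j$, whose increments are again independent centred Gaussians with variances in $[1/20,20]$, gives $\mathbb{P}(A_3)\ll\min\{1,b/\sqrt{n}\}$.

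\textbf{Step 3: condition on the outer blocks and bound the middle block.} Fix the values of $G_1,\dots,G_m$ and $G_{n-m+1},\dots,G_n$. Then $A_2$ says that the middle sum $M=\sum_{m<k\le n-m}G_k$ lies in a fixed interval of length $b$. The variable $M$ is independent of the outer blocks and is Gaussian with variance $\asymp n$. Its density is therefore bounded by $O(1/\sqrt{n})$, so the conditional probability of $A_2$ is $\ll\min\{1,b/\sqrt{n}\}$, uniformly in the conditioning.

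\textbf{Step 4: combine.} Since $A_1$ and $A_3$ are measurable with respect to disjoint independent blocks, we get
\[
\mathbb{P}(A_1\cap A_2\cap A_3)\le \mathbb{E}\big[\mathbf{1}_{A_1}\mathbf{1}_{A_3}\,\mathbb{P}(A_2\mid G_{\text{outer}})\big]\ll \min\big\{1,\tfrac{b}{\sqrt n}\big\}\,\mathbb{P}(A_1)\,\mathbb{P}(A_3).
\]
Inserting the bounds from Step 2 gives the claimed estimate.

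The main point of care is Step 1. The window event couples both ends of the walk through $S_n$, so it has to be rewritten as a barrier event $A_3$ on the reversed last block that is genuinely independent of the rest. This is done by replacing the moving threshold $S_n-a$ with the deterministic threshold $-b$. After that, the middle block absorbs the window constraint through the density bound, and the remaining steps are routine.
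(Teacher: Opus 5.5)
Your proof is correct. Each of the three events contains the original one: for $j\le m<n$ you have $T_j=S_n-S_{n-j}\ge (a-b)-a=-b$. The two outer blocks are independent. The middle sum has variance $\asymp n$, so its density is $O(1/\sqrt n)$ uniformly in the conditioning. And $m\asymp n$ lets you replace $\sqrt m$ by $\sqrt n$.

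The paper gives no proof of this lemma to compare with. It quotes the estimate from Harper and treats it, like Lemma~\ref{ballot}, as a black box; Section~\ref{prob_heur} even says these results are not used explicitly.

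What your route buys is that the endpoint-window bound becomes an elementary consequence of two ingredients:
\begin{enumerate}
\item the upper-bound half of Lemma~\ref{ballot} with $h\equiv 0$, applied once forwards and once to the reversed last block;
\item a Gaussian density bound for the middle block.
\end{enumerate}
Using three blocks rather than two matters. With a two-block split, the second half, read backwards, would need a ballot estimate with its endpoint pinned in a window of length $b$, which is essentially the lemma again. In your version the middle block absorbs the window, so no pinned-endpoint estimate is ever needed.

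A small nit: Lemma~\ref{ballot} as printed requires $|h(j)|<10\log j$, which $h\equiv 0$ (or any $h$) violates at $j=1$ purely because of the strict inequality. This is a typo in the statement. All you actually use is the classical bound $\mathbb{P}(S_j\le a\ \forall j\le m)\ll\min\{1,a/\sqrt m\}$.
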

To adapt these into the random Euler product setting, Harper derives Lemmas 4 and 5 in \cite{aharper201} to do this. Since their statements are rather long, we choose against doing this, but will try to explain the differences between the pure probability situation and the random Euler product one. To do this, we set for $0 \leq \ell \leq \log\log(P)$ for some large $P$
$$I_{\ell,\text{rand}}(h):=\prod_{P^{e^{-(\ell+1)}}< p \leq P^{e^{-\ell}}}\big(1-\frac{f(p)}{p^{1/2+ih}}\big)^{-1}.$$
It turns out (due a direct Gaussian comparison) that $\log(|I_{\ell,\text{rand}}(h)|)$ is well approximated by a standard Gaussian random variable of mean $1$ and variance $1$. This means that $\log(|F_P(1/2+ih)|) := \sum_{\ell \leq \log\log(P)}\log(|I_{\ell,\text{rand}}(h)|)$ can be seen as a sum of Gaussians. This strategy works quite well, except for very large $\ell$ values which correspond to very small primes (which are not very Gaussian). Consequently, we introduce a large constant $B$ and we can show that $\log(|F_p(s)|) \approx \sum_{\ell \leq \log\log(P)-B-1}\log(|I_{\ell,\text{rand}}(h)|)$, and proceed with applying pure probability results like the ones stated above to this situation. This is a very high level interpretation of what is really going on, so if we need to use these results then we will recall them from \cite{aharper201}.  
\section{The chaining argument for lognormal contribution} \label{chain_arg}
In this section we will prove a chaining argument which we will use throughout the paper. The log-normal contribution is such a short Dirichlet polynomial, we can afford to approximate by a truncated Euler product once we have transferred from character sums to random multiplicative functions. 
\begin{lemma}
    Let $\theta \in (-1/2,0]$. Then uniformly for all $|h| \leq \log^{\theta}(q)/2$
    \begin{equation}
        \mathbb{E}\big|\sum_{\substack{m \leq q^{\varepsilon_\theta} \\ P^+(m) \leq P_\theta}}\frac{f(m)}{m^{1/2+ih}} - \exp\big(\sum_{p^j \leq P_\theta}\frac{f(p)^j}{jp^{j(1/2+ih)}}\big)\big|^2\ll \log(P_\theta)e^{-\frac{\varepsilon_\theta \log(q)}{\log(P_\theta)}} \label{exp_log_eq}
    \end{equation} 
    \label{exp_logn}
\end{lemma}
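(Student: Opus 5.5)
The plan is to identify the exponential with the full Euler product over primes up to $P_\theta$. The difference is then exactly the tail of the smooth numbers beyond $q^{\varepsilon_\theta}$, whose mean square is computed by orthogonality and bounded by Rankin's trick.

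\textbf{Step 1: the exponential is the Euler product.} For each prime $p \leq P_\theta$, the sum $\sum_{j\geq 1}\frac{f(p)^j}{jp^{j(1/2+ih)}}$ equals $-\log\big(1-\frac{f(p)}{p^{1/2+ih}}\big)$. The exponent in the statement runs only over $p^j \leq P_\theta$, so we must account for the omitted terms with $p\leq P_\theta<p^j$. I would first check whether the intended reading is the full series over all $j$ for primes $p\leq P_\theta$. If the truncation $p^j\leq P_\theta$ is literal, the discrepancy is $\exp(\sum_{p^j\le P_\theta})\,(1-e^{\Delta})$, where $\Delta=-\sum_{p\le P_\theta,\,p^j>P_\theta}\frac{f(p)^j}{jp^{j(1/2+ih)}}$. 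Here $\Delta$ is a sum of independent mean-zero terms with variance $\sum_{p\le P_\theta<p^j}p^{-j}\ll P_\theta^{-1/2}$. Hence $\mathbb{E}|1-e^\Delta|^2$ is tiny, and the cross term can be handled by H\"older using a bounded fourth moment of the Euler product: $\mathbb{E}|F|^4\ll \log^4 P_\theta$, which is still negligible against $P_\theta^{-1/2}$.

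With the untruncated series, the exponential equals $F(h):=\prod_{p\leq P_\theta}\big(1-\frac{f(p)}{p^{1/2+ih}}\big)^{-1}=\sum_{P^+(m)\leq P_\theta}\frac{f(m)}{m^{1/2+ih}}$, by complete multiplicativity and absolute convergence almost surely. The product is finite, and each factor has $|f(p)p^{-1/2}|<1$.

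\textbf{Step 2: orthogonality.} The difference is therefore $\sum_{m>q^{\varepsilon_\theta},\,P^+(m)\leq P_\theta}\frac{f(m)}{m^{1/2+ih}}$. Since $\mathbb{E}f(m)\overline{f(n)}=\mathbf{1}_{m=n}$ for Steinhaus $f$, its mean square is
\[
\sum_{\substack{m>q^{\varepsilon_\theta}\\ P^+(m)\leq P_\theta}}\frac{1}{m},
\]
independently of $h$. To justify this with an infinite sum, truncate at $m\le X$, apply orthogonality, and let $X\to\infty$, using $L^2$ convergence from the summability of $\sum_{P^+(m)\le P_\theta}1/m$.

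\textbf{Step 3: Rankin's trick.} For $0<\alpha<1/2$,
\[
\sum_{\substack{m>q^{\varepsilon_\theta}\\ P^+(m)\leq P_\theta}}\frac1m\leq q^{-\alpha\varepsilon_\theta}\prod_{p\leq P_\theta}\big(1-p^{\alpha-1}\big)^{-1}.
\]
Take $\alpha=1/\log P_\theta$. Then $p^{\alpha}\le e$ for $p\le P_\theta$, so the product is $\ll\exp(\sum_{p\le P_\theta}\frac{1}{p}+O(1))\ll \log P_\theta$, and the prefactor is $e^{-\varepsilon_\theta\log q/\log P_\theta}$, giving the claimed bound.

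The main obstacle is Step 1, namely whether the truncation $p^j\leq P_\theta$ is literal. If it is, the H\"older step needs a fourth moment of $F$, which follows from the same orthogonality and gives $\mathbb{E}|F|^4\ll\prod_{p\leq P_\theta}(1-1/p)^{-4}\ll\log^4P_\theta$. The resulting error is of order $P_\theta^{-1/4}\log^2P_\theta$, which must be absorbed into the right-hand side. This holds as long as $\varepsilon_\theta\log q/\log P_\theta$ is not larger than about $\log P_\theta$, and that should be verified in the range $\theta\in(-1/2,0]$. If it fails, I would instead keep all $j$ in the exponent, where Step 1 is exact.
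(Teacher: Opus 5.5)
Your route is the paper's: identify the exponential with $\prod_{p\le P_\theta}(1-f(p)p^{-1/2-ih})^{-1}$ up to the omitted prime powers, then bound the smooth tail past $q^{\varepsilon_\theta}$ by orthogonality and Rankin. The gap is the check you deferred, and it cannot be closed: with the literal truncation $p^j\le P_\theta$, the statement is false for $\theta\in(-1/3,0]$.

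Let $J_p=\lfloor \log P_\theta/\log p\rfloor$ and expand each factor $\exp\big(\sum_{j\le J_p}(f(p)p^{-1/2-ih})^j/j\big)$ as a power series. This writes the exponential as $\sum_{P^+(m)\le P_\theta}b(m)f(m)m^{-1/2-ih}$, where $b$ is multiplicative, $0\le b\le 1$, and $b(p^k)=1$ whenever $p^k\le P_\theta$. The orthogonality of your Step 2 then gives exactly
\[
\mathbb{E}\Big|\sum_{\substack{m\le q^{\varepsilon_\theta}\\ P^+(m)\le P_\theta}}\frac{f(m)}{m^{1/2+ih}}-\exp\Big(\sum_{p^j\le P_\theta}\frac{f(p)^j}{jp^{j(1/2+ih)}}\Big)\Big|^2=\sum_{P^+(m)\le P_\theta}\frac{\big(\mathbf{1}_{m\le q^{\varepsilon_\theta}}-b(m)\big)^2}{m}.
\]
For $\sqrt{P_\theta}<p\le P_\theta$ we have $J_p=1$, so $b(p^2)=1/2$. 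Meanwhile $p^2\le P_\theta^2\le q^{\varepsilon_\theta}$ since $\theta>-1/2$. Hence the left side is at least $\frac14\sum_{\sqrt{P_\theta}<p\le P_\theta}p^{-2}\gg P_\theta^{-1/2}/\log P_\theta=e^{-\frac12\log^{|\theta|}(q)}/\log^{|\theta|}(q)$. The right side is $\log^{|\theta|}(q)\exp\big(-\log^{1+2\theta}(q)/(\log\log q)^2\big)$, which is far smaller once $1+2\theta>|\theta|$, i.e. $\theta>-1/3$. At $\theta=0$ it is blatant: $P_0=e$, the exponent is just $f(2)2^{-1/2-ih}$, and the left side is at least $1/16$. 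Your own criterion, $\varepsilon_\theta\log q/\log P_\theta\lesssim\log P_\theta$, is exactly $\theta\le-1/3$; in that range your argument does prove the statement as written.

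The paper's proof has the same hole. It bounds the truncation term by $\ll P_\theta^{-1/2}$ and calls this ``certainly small enough'', which holds only for $\theta\le-1/3$. It also Taylor-expands $e^{\Delta}-1$ as though $\Delta$ were pointwise $O(P_\theta^{-1/2})$. In fact $|\Delta|$ can be about $\frac12\log 2$, coming from $\sum_{\sqrt{P_\theta}<p\le P_\theta}1/(2p)$; only its mean square is small, and $\Delta$ is not independent of the truncated exponential. Your H\"older step, using $|\Delta|\ll 1$ so that $|1-e^{\Delta}|\ll|\Delta|$, handles this correctly. The identity above gives the sharper $O(P_\theta^{-1/2})$ directly, since $1-b(m)\in[0,1]$ vanishes unless some prime power exactly dividing $m$ exceeds $P_\theta$.

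So what is actually true is one of two corrected statements:
\begin{itemize}
\item the stated bound plus an extra $O(P_\theta^{-1/2})$; or
\item your fallback, with all $j\ge 1$ in the exponent, for which your Steps 2--3 are a complete proof.
\end{itemize}
In the fallback there is one small repair at $\theta=0$: there $P_0=e$, so $\alpha=1/\log P_\theta=1$ violates your constraint $\alpha<1/2$, and you should compute the tail $\sum_{2^k>q^{\varepsilon_0}}2^{-k}$ directly. You should state explicitly which corrected version you are proving rather than leaving the range of $\theta$ unverified.
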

\begin{proof}
    We see that
    \begin{equation*}
        \begin{split}\mathbb{E}\big|\sum_{\substack{m \leq q^{\varepsilon_\theta} \\ P^+(m) \leq P_\theta}}\frac{f(m)}{m^{1/2+ih}} - \exp\big(\sum_{p^j \leq P_\theta}\frac{f(p)^j}{jp^{j(1/2+ih)}}\big)\big|^2 \ll \mathbb{E}\big|\sum_{\substack{m \leq q^{\varepsilon_\theta} \\ P^+(m) \leq P_\theta}}\frac{f(m)}{m^{1/2+ih}}-\sum_{\substack{P^+(m) \leq P_\theta}}\frac{f(m)}{m^{1/2+ih}}\big|^2 \\ +\mathbb{E}\big|\sum_{\substack{P^+(m) \leq P_\theta}}\frac{f(m)}{m^{1/2+ih}}-\exp\big(\sum_{p^j \leq P_\theta}\frac{f(p)^j}{jp^{j(1/2+ih)}}\big)\big|^2\end{split}
    \end{equation*}
    where we have used $|A+B|^2 \leq 2(|A|^2+|B|^2)$. The first term on the right can be then bounded by $\sum_{\substack{m > q^{\varepsilon_\theta} \\ P^+(m) \leq P_\theta}}\frac{f(m)}{m^{1/2+ih}}$, where we apply Rankin's trick in a similar fashion to that seen in Proposition \ref{factored_form} and we obtain our result. As for the second term on the right, notice that
    $$\sum_{\substack{P^+(m) \leq P_\theta}}\frac{f(m)}{m^{1/2+ih}} = \exp\bigg(\sum_{p^j :p \leq P_\theta}\frac{f(p)^j}{jp^{j(1/2+ih)}}\bigg),$$
    so we can rewrite the second term as
    $$\mathbb{E}\big|\exp\big(\sum_{p^j :p \leq P_\theta}\frac{f(p)^j}{jp^{j(1/2+ih)}}\big)-\exp\big(\sum_{p^j \leq P_\theta}\frac{f(p)^j}{jp^{j(1/2+ih)}}\big)\big|^2.$$
    After splitting the sum, we have this is
    $$\mathbb{E}\big|\exp\big(\sum_{p^j \leq P_\theta}\frac{f(p)^j}{jp^{j(1/2+ih)}}\big)\big(\exp\big(\sum_{p^j >P_\theta:p \leq P_\theta}\frac{f(p)^j}{jp^{j(1/2+ih)}}\big)-1\big)\big|^2.$$
    The remaining terms in the second exponential contribute at most $O(\frac{1}{\sqrt{P_\theta}\log(P_\theta)})$, so we Taylor expand the second exponential to obtain this is
    $$\ll \frac{1}{\sqrt{P_\theta}\log(P_\theta)}\mathbb{E}\big|\exp\big(\sum_{p^j \leq P_\theta}\frac{f(p)^j}{jp^{j(1/2+ih)}}\big)\big|^2 \ll \frac{1}{\sqrt{P_\theta}}$$
    which is certainly small enough.
\end{proof}
For simplicity, we discard all the $j \geq 3$ as all of the terms will then be summable for the rest of this section. As for the chaining argument, it follows from the next two lemmas. We define the events
\begin{equation}
    \mathcal{S}_\chi(A) :=\big\{\max_{|h| \leq \log^{\theta}(q)}\big|\sum_{p \leq P_\theta}\bigg(\sum_{v=1,2}\frac{\chi(p^v)}{vp^{v(1/2+ih)}}-\frac{\chi(p^v)}{p^{v}}\bigg) \big| \leq A\big\}, \label{central_char_shift}
\end{equation}
and its random counterpart
\begin{equation}
    \mathcal{S}_{\text{rand}}(A) :=\big\{\max_{|h| \leq \log^{\theta}(q)}\big|\sum_{p \leq P_\theta}\sum_{v=1,2}\big(\frac{f(p^v)}{vp^{v(1/2+ih)}}-\frac{f(p^v)}{p^{v}}\big) \big| \leq A\big\}. \label{central_rand_shift}
\end{equation}
Before we proceed to studying $\mathcal{S}_\chi(A)$ in more detail, we need to discretise the interval $|h| \leq \log^{\theta}(q)/2$. This will reduce our tasks involving $\mathcal{S}_\chi(A)$ to investigating the event only at a discrete set of points. For the next lemma, we introduce the notation
$$Y_{x,\chi}(h) = \sum_{p \leq x}\big(\frac{\chi(p)}{p^{1/2+ih}}+\frac{(\chi(p))^2}{p^{1+2ih}}\big).$$ Then the difference $Y_{P_\theta,\chi}(h)-Y_{P_\theta,\chi}(0)$ is also a Dirichlet polynomial, so we can apply the following lemma from \cite{abr20} (which is also Lemma A.1 in \cite{ah24}).
\begin{lemma}
    (Lemma 27 in \cite{abr20}) Let $D(s,\chi) = \sum_{n \leq N}\frac{\chi(n)}{n^s}$, a Dirichlet polynomial of length $N$. Then for any $\gamma \geq 100$
    \begin{equation*}\begin{split}\max_{|h| \leq \log^\theta(q)/2}|D(1/2+ih,\chi)|^2 \ll&\sum_{|j| \leq 16\log^\theta(q)\log(N)}\big|D\big(\frac{1}{2}+\frac{2\pi i j}{8\log(N)},\chi \big)\big|^2 \\
    &+ \sum_{|j| > 16\log^\theta(q)\log(N)}\frac{1}{1+|j|^\gamma}\big|D\big(\frac{1}{2}+\frac{2\pi i j}{8\log(N)},\chi \big)\big|^2.\end{split}
    \end{equation*}
\end{lemma}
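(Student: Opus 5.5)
The plan is a sampling (Shannon–Whittaker type) argument, using that $D(1/2+it,\chi)$, as a function of $t$, has frequencies $\log n \in [0,\log N]$. Choose a Schwartz function $\phi$ whose Fourier transform $\hat\phi$ equals $1$ on $[0,\log N]$ (after shifting and scaling, on the interval $[-1/2,1/2]$ in normalised units) and is supported in an interval of length at most $8\log N$. The sampling rate is $2\pi/(8\log N)$, which corresponds to oversampling by a factor of $8$ relative to the Nyquist rate. Poisson summation then gives the exact reconstruction
$$D(1/2+ih,\chi)=\frac{1}{8\log N}\sum_{j\in\mathbb{Z}}D\Big(\frac12+\frac{2\pi i j}{8\log N},\chi\Big)\,K\Big(h-\frac{2\pi j}{8\log N}\Big).$$
The kernel $K$ is essentially $\log N\cdot\check{\phi}(\log N\,\cdot)$, twisted by a unimodular phase, so it satisfies $|K(u)|\ll_\gamma \log N\,(1+\log N|u|)^{-\gamma-2}$. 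To verify the identity, expand $D$ into its monomials $n^{-1/2-ih}$ and apply Poisson summation to each monomial separately. Because $\hat\phi=1$ on the spectrum, no aliasing occurs.

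Next, square the identity and apply Cauchy–Schwarz with weights $w_j=(1+\log N|h-2\pi j/(8\log N)|)^{-\gamma-2}$, whose sum is $O(1)$. This gives
$$|D(1/2+ih,\chi)|^2\ll\sum_j w_j\,\big|D(\tfrac12+\tfrac{2\pi ij}{8\log N},\chi)\big|^2.$$
Now take the maximum over $|h|\le\log^\theta(q)/2$. For $|j|\le16\log^\theta(q)\log N$, bound $w_j\le1$, which produces the first sum. For $|j|>16\log^\theta(q)\log N$, the sample point $2\pi j/(8\log N)$ lies at distance $\gg|j|/\log N$ from every admissible $h$, because $2\pi|j|/(8\log N)\ge 4\pi\log^\theta q>\log^\theta(q)/2$. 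Hence $w_j\ll(1+|j|)^{-\gamma-2}\le(1+|j|^\gamma)^{-1}$, uniformly in $h$, which produces the tail sum.

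The main obstacle is constructing $\phi$ with compact Fourier support and sufficiently fast decay of its inverse transform, uniformly for any $\gamma\ge100$. A smooth bump function on frequency space gives superpolynomial decay of $K$, so this should be achievable, with implied constants depending on $\gamma$. The remaining work is bookkeeping the $2\pi$ normalisations so that the sampling spacing is exactly $2\pi/(8\log N)$ and the spectrum $[0,\log N]$ sits inside the plateau where $\hat\phi=1$. The factor $8$ leaves plenty of room for this.
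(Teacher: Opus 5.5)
Your proof is correct. With $\hat\phi(\xi)=\psi(\xi/\log N)$ for a fixed smooth bump $\psi$ equal to $1$ on the normalised spectrum, Poisson summation applied to each monomial gives the interpolation identity with no aliasing. There is room to spare, since translates of $[0,\log N]$ by nonzero multiples of $8\log N$ stay well away from the support. The kernel then decays faster than any power of $1+\log N\,|u|$, with constants depending only on $\gamma$. Your observation that $|j|>16\log^{\theta}(q)\log N$ forces $|t_j|>4\pi\log^{\theta}(q)\geq 8\pi|h|$, and hence $|h-t_j|\geq |t_j|/2$, is exactly what gives the tail weights $(1+|j|^{\gamma})^{-1}$ uniformly in $h$.

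The paper takes a different route in that it gives no argument. It imports Lemma 27 of \cite{abr20}, which is stated for $|h|\leq 1$ with the unweighted sum over $|j|\leq 16\log N$, and asserts that it carries over to $|h|\leq \log^{\theta}(q)/2$. For $\theta<0$ this is not a formal consequence of the cited statement. Shrinking the interval does not by itself move the indices $16\log^{\theta}(q)\log N<|j|\leq 16\log N$ from the unweighted sum into the weighted tail. That sharper range is what the paper needs in Lemma \ref{char_chaining}: there $N=P_\theta^2$ makes the unweighted range $|j|\leq 32$, whereas a range of length $\asymp \log^{|\theta|}(q)$ would cost that factor in the union bound. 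So the citation is shorter, but your oversampling argument is what actually justifies the mesoscopic version. It also makes explicit that only the band-limitation of $h\mapsto D(1/2+ih,\chi)$ to frequencies in $[0,\log N]$ is used, so nothing about $\chi$ enters.
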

Since Lemma 27 in \cite{abr20} is actually stated for the maximum over intervals of length $2$, this goes through without issue (since it holds for a general Dirichlet polynomial). Finally, we will show that $\mathcal{S}_\chi(A)$ occurs with high probability. This follows in an identical fashion to results in \cite{ah24} (only we do this in the discrete setting).
\begin{lemma}
    Let $\theta \in (-1/2,0]$. Then there exists an absolute $c>0$ such that the following is true. Suppose $0 \leq A\leq \log^{(1+\theta)/2}(q)$, then
    $$\mathbb{P}_q\big(\mathcal{S}_\chi(A) \text{ fails}\big) \ll e^{-A^2/c}.$$ \label{char_chaining}
\end{lemma}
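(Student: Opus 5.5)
Let $D_\chi(h)$ denote the Dirichlet polynomial in the definition of $\mathcal{S}_\chi(A)$:
$$D_\chi(h)=\sum_{p \leq P_\theta}\sum_{v=1,2}\Big(\frac{\chi(p^v)}{vp^{v(1/2+ih)}}-\frac{\chi(p^v)}{p^{v}}\Big).$$
It has length $N \leq P_\theta^2$, so $\log N \ll \log^{|\theta|}(q)$. The plan is to discretise with the preceding lemma (Lemma 27 of \cite{abr20}), then bound high moments at each grid point using Lemma \ref{even_moment}, and finally apply Markov's inequality.

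\textbf{Discretisation.} Apply the preceding lemma with $\gamma = 100$. This bounds $\max_{|h|\leq \log^\theta(q)/2}|D_\chi(h)|^2$ by a weighted sum of $|D_\chi(h_j)|^2$ at the points $h_j=2\pi j/(8\log N)$. The number of unweighted points satisfies $|j| \leq 16\log^\theta(q)\log(N) \ll \log^{\theta+|\theta|}(q)$, which is $O(1)$ for $\theta \leq 0$. This is the crucial feature of the mesoscopic scale: the field is essentially frozen. The tail $|j|$ carries weights $(1+|j|^{\gamma})^{-1}$, which are summable. Since the interval in $\mathcal{S}_\chi(A)$ has length $2\log^\theta(q)$ rather than $\log^\theta(q)$, we first split it into two pieces and shift; by the remark after the lemma this is harmless.

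\textbf{Moment bound.} Raise to the $k$-th power and use Hölder with the summable weights. This gives
$$\mathbb{E}^{\text{char}}\max_{h}|D_\chi(h)|^{2k} \ll C^k \sup_j \mathbb{E}^{\text{char}}|D_\chi(h_j)|^{2k}.$$
For the tail points, the weights $|j|^{-\gamma}$ absorb the $C^k$ loss; if needed, take $\gamma$ growing with $k$, or reserve part of the decay. At a fixed $h_j$, apply Lemma \ref{even_moment} with $c(n)=\mathbf{1}_{n=1}$ and $R(\chi)=D_\chi(h_j)$, whose coefficients are
$$a(p)=p^{-ih_j}-p^{-1/2}, \qquad a(p^2)=\tfrac12 p^{-2ih_j}-1.$$
This is valid as long as $P_\theta^{2k}<q$, that is, $k \leq \log^{1+\theta}(q)/3$. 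The lemma gives $\mathbb{E}^{\text{char}}|D_\chi(h_j)|^{2k}\ll k!\,(\Sigma)^k$, where
$$\Sigma=\sum_{p\leq P_\theta}\frac{2|a(p)|^2}{p}+\frac{12|a(p^2)|^2}{p^2}.$$
The step I expect to be the main obstacle is showing $\Sigma=O(1)$ rather than $\log\log P_\theta$. Naively $|a(p)|^2\approx 1$, which gives $\Sigma\asymp \log\log P_\theta$. This is where the frozen-field structure must enter: one should measure $D_\chi(h)$ against $D_\chi(0)$. That is, I would prove the statement for the centred increment $Y_{P_\theta,\chi}(h)-Y_{P_\theta,\chi}(0)$, which is what the chaining actually uses, as the text preceding the lemma indicates. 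Its prime coefficients are $p^{-ih}-1$, and
$$|p^{-ih}-1|^2\ll \min\{1,\,h^2\log^2 p\}.$$
With $|h|\leq\log^\theta(q)$ and $\log p\leq\log^{|\theta|}(q)$ we have $h\log p\leq 1$. Hence
$$\sum_{p\leq P_\theta}\frac{h^2\log^2p}{p}\ll h^2\log^2 P_\theta\ll 1,$$
and the prime-square terms contribute $O(1)$. So $\Sigma\ll 1$. If the literal definition with the $-\chi(p^v)/p^v$ shift is kept, one instead checks that this shift differs from the centred one by a polynomial of bounded variance, handled the same way.

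\textbf{Conclusion.} We now have $\mathbb{E}^{\text{char}}\max_h|D_\chi(h)|^{2k}\ll k!\,C^k$. Markov's inequality gives
$$\mathbb{P}_q(\mathcal{S}_\chi(A)\text{ fails})\ll k!\,C^k A^{-2k}\ll (Ck/A^2)^k.$$
Choose $k=\lfloor A^2/(eC)\rfloor$, which yields $e^{-A^2/c}$. The constraint $k\leq \log^{1+\theta}(q)/3$ is exactly where the hypothesis $A\leq \log^{(1+\theta)/2}(q)$ is used, after adjusting $c$. For small $A$ the bound is trivial.
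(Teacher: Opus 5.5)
Your architecture is the paper's. You discretise with Lemma 27 of \cite{abr20} ($\gamma=100$, $N=P_\theta^2$, so only $O(1)$ unweighted points), pass to the centred increment $Y_{P_\theta,\chi}(h)-Y_{P_\theta,\chi}(0)$, bound its $2k$-th moment by Lemma \ref{even_moment} using $|p^{-ih}-1|^2\ll h^2\log^2 p$, and apply Markov with $k\asymp A^2$, where $P_\theta^{2k}<q$ gives the range of $A$. Bounding a moment of the maximum rather than taking the paper's union bound is immaterial, and your splitting of the length-$2\log^\theta(q)$ interval handles a point the paper skips.

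However, the step ``$\mathbb{E}^{\text{char}}\max_h|D_\chi(h)|^{2k}\ll C^k\sup_j\mathbb{E}^{\text{char}}|D_\chi(h_j)|^{2k}$, then $\Sigma\ll 1$'' fails. The discretisation sum runs over all $|j|>16\log^\theta(q)\log N$, i.e.\ over points $h_j=2\pi j/(8\log N)$ far outside the interval, where $h_j\log p\leq 1$ is false. At such points $|p^{-ih_j}-1|^2$ averages to $2$ over primes with $\log p\gtrsim\log P_\theta/|j|$. So $\Sigma(h_j)$ is of order $\min\{\log(2+|j|),\log\log P_\theta\}$, and the supremum only gives $k!\,(C\log\log P_\theta)^k$, i.e.\ a bound $e^{-A^2/(c\log\log P_\theta)}$.

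Naive weighted H\"older, $(\sum_j w_jX_j)^k\leq(\sum_j w_j)^{k-1}\sum_j w_jX_j^k$ with $X_j=|D_\chi(h_j)|^2$ and $w_j=(1+|j|^{100})^{-1}$ in the tail, does not repair this for fixed $\gamma$. The terms with $\log|j|\approx k/\gamma$ already contribute an extra factor of about $(k/e\gamma)^k$. Letting $\gamma$ grow with $k$ would need the $\gamma$-dependence of the constant in the discretisation lemma, which the statement does not give.

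What is missing is to let the polynomial decay of the weights act at the level of the $k$-th power, or of the thresholds, where it swamps the merely logarithmic growth of the variance. The paper uses a union bound. The event $\sum_{|j|>32}X_j/(1+|j|^{100})>A^2/C$ forces $X_j>A^2|j|^{50}/C$ for some $|j|>32$. Markov at each such $j$ then gives $\ll r!\,(C\log(2+|j|))^r(A^2|j|^{50}/C)^{-r}$, which sums over $j$ to something negligible.

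Alternatively, apply H\"older with weights $|j|^{-2}$: $(\sum_j w_jX_j)^k\leq(\sum_j|j|^{-2k/(k-1)})^{k-1}\sum_j(|j|^2w_j)^kX_j^k$. The tail then contributes $\ll C^kk!\sum_{|j|>32}|j|^{-98k}\log^k(2+|j|)\ll C^kk!$. Your phrase ``reserve part of the decay'' points this way, but the obstruction is the growth of $\Sigma(h_j)$, not a $C^k$ loss.

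Separately, your aside about the literal definition is wrong. The literal and centred versions differ by $\sum_{p\leq P_\theta}\chi(p)(p^{-1/2}-p^{-1})$ plus a bounded-variance $v=2$ part. That difference has variance $\asymp\sum_{p\leq P_\theta}1/p\asymp\log\log P_\theta$, which is exactly the $\Sigma\asymp\log\log P_\theta$ you computed for the literal coefficients. So for $\theta<0$ the literal statement cannot hold with an absolute $c$. The centred version is the correct reading, and it is what the paper actually proves and later uses.
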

\begin{proof}
    We begin by introducing a discretisation over the positions of $h$. We see that for $h_j = \frac{\pi j \log^{\theta}(q)}{4}$, then
    \begin{align*}
        &\mathbb{P}_q\big(\max_{|h| \leq \log^{\theta}(q)/2}\big|Y_{P_\theta,\chi}(h)-Y_{P_\theta,\chi}(0) \big| > A\big) \numberthis \label{discrete_lognormal} \\
        \ll& \mathbb{P}_q\big(\sum_{|j| \leq 32}|Y_{P_\theta,\chi}(h_j)-Y_{P_\theta,\chi}(0)|^2 > A^2/C\big) + \mathbb{P}_q\big(\sum_{|j| > 32}\frac{|Y_{P_\theta,\chi}(h_j)-Y_{P_\theta,\chi}(0)|^2}{1+|j|^{100}} > A^2/C\big)
    \end{align*}
    for $C>1$ an arbitrary fixed constant. This follows from choosing $\gamma=100$ and $N=P_\theta^2$ in the previous lemma. To handle the first sum, we notice
    \begin{align*}
        &\mathbb{P}_q\big(\sum_{|j| \leq 32}|Y_{P_\theta,\chi}(h_j)-Y_{P_\theta,\chi}(0)|^2 > A^2/C\big) \ll \sum_{|j| \leq 32}\mathbb{P}_q\big(|Y_{P_\theta,\chi}(h_j)-Y_{P_\theta,\chi}(0)|^2 > A^2/C\big),
    \end{align*}
    and apply Markov's inequality to the $r=\lfloor \frac{A^2}{2048C\pi^2}\rfloor$ (which is permitted by Lemma \ref{even_moment} as $P_\theta^r < q$ as $P_\theta^r \leq q^{1/1000}$ say) to find for each individual $j$
    \begin{align*}
        \mathbb{E}^{\text{char}}\bigg(|Y_{P_\theta,\chi}(h_j)-Y_{P_\theta,\chi}(0)|^2 > A^2/C\bigg)
        \ll& \frac{\mathbb{E}^{\text{char}}(|Y_{P_\theta,\chi}(h_j)-Y_{P_\theta,\chi}(0)|^{2r})}{(A^2/C)^{r}} \\ \ll& \frac{(r!)\big(\sum_{p \leq P_\theta}\frac{2|p^{-ih_j}-1|^2}{p}+\frac{12|p^{-2ih_j}-1|^2}{p^2}\big)^r}{(A^2/C)^r} \\
        \ll& \frac{2^r(r!)\big(\sum_{p \leq P_\theta}\frac{|p^{-ih_j}-1|^2}{p}+O(1)\big)^r}{(A^2/C)^r} \\
        \ll& \frac{2^r(r!)(\pi j/4)^{r}}{(A^2/C)^{r}}.
    \end{align*}
    Applying the Stirling approximation for $(r!)$ (that for $r$ sufficiently large, $r! \sim \sqrt{2\pi r}(r/e)^r$) then shows
    \begin{equation}
        \sum_{|j| \leq 32}\frac{2^r(r!)(\pi j/4)^{2r}}{(A^2/C)^{r}} \ll e^{-A^2/c} \label{small_j}
    \end{equation}
    for some constant $c>0$. To handle the second term in equation \ref{discrete_lognormal}, we observe
    \begin{align*}
        \big\{ \sum_{|j| > 32}\frac{|Y_{P_\theta,\chi}(h_j)-Y_{P_\theta,\chi}(0)|^2}{1+|j|^{100}} > A^2/C\big\} \subseteq \bigcup_{|j| > 32}\big\{|Y_{P_\theta,\chi}(h_j)-Y_{P_\theta,\chi}(0)|^2 > \frac{A^2|j|^{50}}{C}\big\}.
    \end{align*}
    From this, similar to before, we have
    \begin{align*}
        &\mathbb{P}_q\big(\sum_{|j| > 32}|Y_{P_\theta,\chi}(h_j)-Y_{P_\theta,\chi}(0)|^2 > A^2/C\big) \ll \sum_{|j| > 32}\mathbb{P}_q\big(|Y_{P_\theta,\chi}(h_j)-Y_{P_\theta,\chi}(0)|^2 > \frac{A^2|j|^{50}}{C}\big).
    \end{align*}
    Then we apply Markov's inequality to the $r=\lfloor\frac{32A^2}{\pi^2C}\rfloor$, so for each $j$ we have
    \begin{align*}
        \mathbb{P}_q\big(|Y_{P_\theta,\chi}(h_j)-Y_{P_\theta,\chi}(0)|^2 > \frac{A^2|j|^{50}}{C}\big) \ll& \frac{\mathbb{E}^{\text{char}}\big|Y_{P_\theta,\chi}(h_j)-Y_{P_\theta,\chi}(0)\big|^{2r}}{(A^2|j|^{50}/C)^r} \\
        \ll& \frac{\mathbb{E}^{\text{char}}(|Y_{P_\theta,\chi}(h_j)-Y_{P_\theta,\chi}(0)|^{2r})}{(A^2||j|^{50}/C)^{r}} \\ \ll& \frac{(r!)\big(\sum_{p \leq P_\theta}\frac{2|p^{-ih_j}-1|^2}{p}+\frac{12|p^{-2ih_j}-1|^2}{p^2}\big)^r}{(A^2|j|^{50}/C)^r} \\
        \ll& \frac{2^r(r!)(\pi/4)^{2r}}{(A^2|j|^{40}/C)^{r}}
    \end{align*}
    using Lemma \ref{even_moment}. Summing over the $j$ followed by the Stirling approximation for $(r!)$ with equation (\ref{small_j}) is sufficient to conclude
    $$\mathbb{P}_q\big(\max_{|h| \leq \log^{\theta}(q)}\big|Y_{P_\theta,\chi}(h)-Y_{P_\theta,\chi}(0) \big| > A\big) \ll e^{-A^2/c}.$$
\end{proof}
We can finally demonstrate that this is sufficient for our chaining bound. While we never apply this proposition in this form, we will use the part of the proof that involves random multiplicative functions many times and this is one of the cleaner ways to state the result. In either case we will refer to the below proposition as and when we require it in either the random or deterministic setting.
\begin{proposition}
    (Single point conditioning) Fix $\theta \in (-1/2,0]$. Then uniformly
    \begin{align*}
        \int_{|h| \leq \log^\theta(q)/2}\mathbb{E}_W^{\text{char}}|\sum_{\substack{m \leq q^{\varepsilon_\theta} \\ P^+(m) \leq P_\theta}}\frac{\chi(m)}{m^{1/2+ih}}|^2dh \ll e^{2W}\log^{\theta}(q)
    \end{align*} \label{chaining}
\end{proposition}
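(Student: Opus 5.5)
The plan is to transfer the conditional character average to the random side, replace the smooth Dirichlet polynomial by a truncated Euler product, and then use the chaining event to freeze the $h$-dependence at $h=0$. There the conditioning $Y_{P_\theta}(0)\in[W,W+1]$ controls the size directly.

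\textbf{Step 1: smooth the conditioning and randomise.} Approximate $\mathbf{1}_{Y_{P_\theta,\chi}(0)\in[W,W+1]}$ from above by a finite sum of partition-of-unity functions $g_j$ from Lemma \ref{partition_result}, with $N$ large and $\delta$ small. For each fixed $h$, the quantity $\mathbb{E}_W^{\text{char}}|\sum_m \chi(m)m^{-1/2-ih}|^2$ is then a sum of terms of exactly the shape covered by Lemma \ref{randomisation}, with $Y=1$, $P=P_\theta$ and $x=q^{\varepsilon_\theta}$. The admissibility condition $xP_\theta^{400(1/\delta)^2\log(N\log P_\theta)}<q$ holds comfortably, since $\log P_\theta=\log^{|\theta|}(q)$ with $|\theta|<1/2$. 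The error term is negligible after integrating over $|h|\le \log^\theta(q)/2$. We are reduced to bounding
\[
\mathbb{E}\,\mathbf{1}_{Y_{P_\theta}(0)\in[W-1,W+2]}\Big|\sum_{\substack{m\le q^{\varepsilon_\theta}\\ P^+(m)\le P_\theta}}\frac{f(m)}{m^{1/2+ih}}\Big|^2 .
\]
We also divide by $\mathbb{P}(Y_{P_\theta}(0)\in[W,W+1])$, which is comparable on both sides by the same randomisation applied with $c\equiv$ the trivial polynomial.

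\textbf{Step 2: Euler product.} By Lemma \ref{exp_logn}, replace the polynomial by $\exp(\sum_{p^j\le P_\theta} f(p)^j/(jp^{j(1/2+ih)}))$. The error is $\ll \log(P_\theta)e^{-\varepsilon_\theta\log q/\log P_\theta}$, which is tiny because $\varepsilon_\theta\log q/\log P_\theta=\log^{1+2\theta}(q)/(\log\log q)^2\to\infty$. This error is unconditional, so we control it after dividing by the conditioning probability. This is fine provided $W$ is not absurdly negative; otherwise we bound the conditional probability using Gaussian tails. Discard the $j\ge3$ terms, which are bounded. The modulus squared is then essentially $\exp(2\Re\sum_{p\le P_\theta}\sum_{v=1,2} f(p^v)/(vp^{v(1/2+ih)}))$.

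\textbf{Step 3: chaining.} Write the exponent as $Y_{P_\theta}(0)$ plus the difference $Y_{P_\theta}(h)-Y_{P_\theta}(0)$, up to the bounded $p^{-v}$ shifts appearing in $\mathcal{S}_{\text{rand}}(A)$. On the conditioned event the first part contributes $e^{2W+O(1)}$. Decompose according to dyadic levels $A\in\{2^k\}$, on which $\mathcal{S}_{\text{rand}}(A)$ holds but $\mathcal{S}_{\text{rand}}(A/2)$ fails. On such a level the integrand is $\le e^{2W+2A+O(1)}$.

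The random analogue of Lemma \ref{char_chaining} gives $\mathbb{P}(\mathcal{S}_{\text{rand}}(A/2)\text{ fails})\ll e^{-A^2/c}$. We need this conditionally on $Y_{P_\theta}(0)\in[W,W+1]$. I expect this to be the main obstacle, since the increments are not independent of $Y_{P_\theta}(0)$.

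My first attempt is to show the differences $Y(h)-Y(0)$ for $|h|\le\log^\theta q$ are nearly independent of $Y(0)$. The relevant primes are those with $|p^{ih}-1|$ not small, namely $\log p\gtrsim \log^{-\theta}q=\log P_\theta$, i.e. only the top scale. Concretely:
\begin{itemize}
\item split the primes into $p\le P_\theta^{1/K}$ and the rest;
\item condition only on the small-prime part, using independence of $f(p)$;
\item control the top block by the moment bound of Lemma \ref{even_moment} in its random form, via Hölder.
\end{itemize}
Summing $e^{2A}e^{-A^2/c}$ over $A$ converges. The remaining level sets, with $A$ beyond $\log^{(1+\theta)/2}q$, are handled crudely by Cauchy--Schwarz and fourth moments. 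Integrating over $h$ gives the factor $\log^\theta(q)$, which yields $e^{2W}\log^\theta(q)$.
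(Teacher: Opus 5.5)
\textbf{Overall structure.} Your architecture is the paper's. You smooth and randomise via Lemmas \ref{partition_result} and \ref{randomisation}, pass to an Euler product via Lemma \ref{exp_logn}, write the exponent as $Y(0)+(Y(h)-Y(0))$ with $Y=Y_{P_\theta}$, and sum $e^{2A}e^{-A^2/c}$ over dyadic levels. The paper smooths $\mathcal{S}_\chi(A)$ together with the conditioning and quotes the character-side Lemma \ref{char_chaining}; randomising only the conditioning, as you do, is a harmless reordering. Also, since the $h$-integral sits outside the expectation, a bound for each fixed $h$ suffices, so neither the supremum over $h$ in $\mathcal{S}_{\text{rand}}(A)$ nor chaining is needed.

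\textbf{The gap.} The step you flag is a genuine gap. Reading $\mathbb{E}_W$ as a normalised conditional expectation, you need
\[
\mathbb{E}\big[e^{2(Y(h)-Y(0))}\,\big|\,Y(0)\in[W,W+1]\big]\ll 1 ,
\]
and your sketch does not give it, for three reasons.
\begin{enumerate}
\item The event $\{Y(0)\in[W,W+1]\}$ involves the top primes too. You cannot condition only on the small-prime part without decomposing by the value of the top-block sum and comparing local probabilities of the small-block sum. That comparison is already a local limit statement.
\item The small-prime part of the increment is still correlated with the small-prime part of $Y(0)$. For $y=P_\theta^{1/K}$ the covariance is $\sum_{p\le y}(\cos(h\log p)-1)/(2p)$, exactly $-\tfrac12$ times that piece's variance. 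The split shrinks the problem but reproduces it.
\item Any H\"older or Cauchy--Schwarz separation of $\mathbf{1}_{Y(0)\in[W,W+1]}$ from the exponential weight, followed by unconditional bounds, loses a negative power of $\mathbb{P}(Y(0)\in[W,W+1])\lesssim(\log\log P_\theta)^{-1/2}$. Cauchy--Schwarz loses $\mathbb{P}^{-1/2}$. H\"older with exponent $s$ weakens $e^{-A^2/c}$ to $e^{-A^2/(cs)}$, and optimising over $s$ still leaves an unbounded factor $\exp(c'\sqrt{\log(1/\mathbb{P})})$.
\end{enumerate}

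\textbf{What is needed.} You need a genuinely conditional input, for example tilting. Set $|F(h)|^2=\prod_{p\le P_\theta}|1-f(p)p^{-1/2-ih}|^{-2}$. This factorises over primes, so
\[
\mathbb{E}\big[\mathbf{1}_{Y(0)\in[W,W+1]}|F(h)|^2\big]=\mathbb{E}|F(h)|^2\cdot\mathbb{Q}_h\big(Y(0)\in[W,W+1]\big),
\]
with the $f(p)$ still independent under $\mathbb{Q}_h$. Each $f(p)$ then has Poisson-kernel law with $\mathbb{E}_{\mathbb{Q}_h}f(p)=p^{-1/2+ih}$. Hence $Y(0)$ has $\mathbb{Q}_h$-mean $\sum_{p\le P_\theta}\cos(h\log p)/p+O(1)=\log\log P_\theta+O(1)$ and variance $\sim\tfrac12\log\log P_\theta$. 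A local limit theorem under $\mathbb{P}$ and $\mathbb{Q}_h$ then gives $\mathbb{E}[|F(h)|^2\mid Y(0)\in[W,W+1]]=e^{2W+O(1+|W|/\log\log P_\theta)}$ for $|W|\ll\log\log P_\theta$.

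\textbf{Range of $W$.} The same heuristic shows uniformity as $W\to-\infty$ fails. For $W<0$ the conditional mean of $2(Y(h)-Y(0))$ is about $2|W|\sum_p(1-\cos(h\log p))/p\big/\sum_p p^{-1}>0$. So your caveat about very negative $W$ affects the main term, not only the error from Lemma \ref{exp_logn}.

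\textbf{A second gap in Step 1.} Step 1 silently drops the leakage $\delta\,\mathbb{E}|\sum_m f(m)m^{-1/2-ih}|^2\asymp\delta\log P_\theta$ coming from $g\le\delta$ off $[-1,1]$. For fixed $\delta$ and $\theta<0$ this swamps the pointwise target $e^{2W}$. With the smallest $\delta$ that Lemma \ref{randomisation} permits, $\log^{-(1+\theta)/2+o(1)}(q)$, the leakage is $\log^{-(1+3\theta)/2+o(1)}(q)$. That is negligible only for $\theta>-1/3$; for $\theta\le -1/3$ you need a majorant of $\mathbf{1}_{[W,W+1]}$ that decays away from the window. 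The paper's ``$\ll\delta_s$'' for this term faces the same constraint.

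\textbf{Comparison with the paper.} The paper's proof does not supply the conditional estimate either. It bounds $\mathbb{E}^{\text{char}}(\mathbf{1}_{\mathcal{T}_\chi(A)}|\cdot|^2)$ with $\mathcal{T}_\chi(A)\subseteq\{Y_{P_\theta,\chi}(0)\in[W,W+1]\}$, uses $e^{2Y(0)}\le e^{2W+2}$ on the window, and takes unconditional tails from Lemma \ref{char_chaining}. Its Cauchy--Schwarz pairs the unconditional $\mathbb{E}(\mathbf{1}_{\mathcal{S}_{\text{rand}}}e^{4|\cdot|})$ with $\mathbb{E}_W e^{4Y(0)}$. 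That is coherent only if $\mathbb{E}_W$ means the unnormalised average $\mathbb{E}[\mathbf{1}_{Y(0)\in[W,W+1]}\,\cdot\,]$.

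Under that reading your argument closes at once, modulo the $\delta$ issue. Drop the division by $\mathbb{P}(Y(0)\in[W,W+1])$. Then $\mathbb{E}[\mathbf{1}_{Y(0)\in[W-1,W+2]}|F(h)|^2]\le e^{2W+4}\,\mathbb{E}[|F(h)|^2e^{-2Y(0)}]$. The last expectation factorises over primes and is $O(1)$, because $\sum_{p\le P_\theta}|p^{-ih}-1|^2/p=O(1)$ for $|h|\log P_\theta\le 1/2$. Not even the dyadic decomposition is needed.

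Under the normalised reading you chose, which the notation $[\,\cdot\,|\,Y_{P_\theta}(0)\in[W,W+1]]$ suggests, the conditional estimate is the missing idea.
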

    \begin{proof}
    We introduce the event $\mathcal{T}_\chi(A)$ for a suitably large constant $A>0$ where
    $$\mathcal{T}_\chi := \mathcal{S}_{\chi}(A) \cap \{Y_{P_\theta,\chi}(0) \in [W,W+1]\}.$$
    Clearly, $\mathbf{1}_{\mathcal{T}_\chi(A)} \leq \mathbf{1}_{\mathcal{T}_\chi(h)}(h)$, which $\mathcal{T}_\chi(h)$ denotes that the event $\mathcal{S}_\chi(A)$ occurs for a particular $h$ (and not all $h$ simultaneously) and $Y_{P_\theta,\chi}$ is in a given interval $[W,W+1]$. Then since our integral is over a finite length, we can approximate $\mathbf{1}_{\mathcal{T}_\chi(h)}(h)$ by a smooth function $g(h)$ which satisfies various properties stated in Lemma \ref{partition_result}. At this point, we have shown that for some $\delta_s,N_s$ to be chosen later that
    \begin{align*}
        &\int_{|h| \leq \log^{\theta}(q)/2}\mathbb{E}^{\text{char}}\big(\mathbf{1}_{\mathcal{T}_\chi(A)}|\sum_{\substack{m \leq q^{\varepsilon_\theta} \\ P^+(m) \leq P_\theta}}\frac{\chi(m)}{m^{1/2+ih}}|^2\big)dh \\ \leq& \int_{|h| \leq \log^{\theta}(q)/2}\mathbb{E}^{\text{char}}\big(g\big(\sum_{p \leq P_\theta}\sum_{v=1,2}\frac{\chi(p^v)}{vp^{v(1/2+ih)}}\big)|\sum_{\substack{m \leq q^{\varepsilon_\theta} \\ P^+(m) \leq P_\theta}}\frac{\chi(m)}{m^{1/2+ih}}|^2\big)dh.
    \end{align*}
    At this point, we can apply Lemma \ref{randomisation} to allow us to switch to a statement involving random multiplicative functions. This is very important because it allows us to compare this Dirichlet polynomial to a true truncated Euler product as there are no periodicity conditions to worry about. As a consequence, we have
    \begin{align*}
        &\int_{|h| \leq \log^{\theta}(q)/2}\mathbb{E}^{\text{char}}\big(g\big(\sum_{p \leq P_\theta}\sum_{v=1,2}\frac{\chi(p^v)}{vp^{v(1/2+ih)}}\big)|\sum_{\substack{m \leq q^{\varepsilon_\theta} \\ P^+(m) \leq P_\theta}}\frac{\chi(m)}{m^{1/2+ih}}|^2\big)dh \\=&\int_{|h| \leq \log^{\theta}(q)/2}\mathbb{E}\big(g\big(\sum_{p \leq P_\theta}\sum_{v=1,2}\frac{f(p^v)}{vp^{v(1/2+ih)}}\big)|\sum_{\substack{m \leq q^{\varepsilon_\theta} \\ P^+(m) \leq P_\theta}}\frac{f(m)}{m^{1/2+ih}}|^2\big)dh + O\big(\frac{\log(x)}{(N_s\log(P_\theta))^{1/\delta_s^2}}\big).
    \end{align*}
    Now we can undo the smoothing, noting that $0 \leq (g - \mathbf{1}_{\mathcal{T}_\chi(h)}(h)) \leq \delta_s$. Consequently, we have
    \begin{align*}
        &\int_{|h| \leq \log^{\theta}(q)/2}\mathbb{E}\big(g(\sum_{p \leq P_\theta}\sum_{v=1,2}\frac{f(p^v)}{vp^{v(1/2+ih)}})|\sum_{\substack{m \leq q^{\varepsilon_\theta} \\ P^+(m) \leq P_\theta}}\frac{f(m)}{m^{1/2+ih}}|^2\big)dh \\
        \leq& \int_{|h| \leq \log^{\theta}(q)/2}\mathbb{E}\big((\mathbf{1}_{\mathcal{T}_{\text{rand}}(h)}(h)+\delta_s)|\sum_{\substack{m \leq q^{\varepsilon_\theta} \\ P^+(m) \leq P_\theta}}\frac{f(m)}{m^{1/2+ih}}|^2\big)dh.
    \end{align*}
    At this point, we can apply the triangle inequality, and obtain the following upper bound on the above
    \begin{align*}
        \ll& \int_{|h| \leq \log^\theta(q)/2}\mathbb{E}\big((1+\delta_s)\big|\sum_{\substack{m \leq q^{\varepsilon_\theta} \\ P^+(m) \leq P_\theta}}\frac{f(m)}{m^{1/2+ih}}-\exp\bigg(\sum_{j=1,2}\sum_{p \leq P_\theta}\frac{f(p)^j}{jp^{j(1/2+ih)}}\bigg)\big|^2\big)dh \\
        &+\int_{|h| \leq \log^\theta(q)/2}\mathbb{E}\big((\mathbf{1}_{\mathcal{T}_{\text{rand}}(h)}(h)+\delta_s)\exp\big(2\sum_{j=1,2}\sum_{p \leq P_\theta}\frac{\Re((f(p)p^{-ih})^j)}{jp^{j/2}}\big)\big)dh.
    \end{align*}
    From Lemma \ref{exp_logn}, the first term's contribution is negligible as $\mathbf{1}_{\mathcal{T}_{\text{rand}}(h)}(h) \leq 1$. This leaves us to handle the term involving the exponential. We have $$\delta_s\int_{|h| \leq \log^\theta(q)/2}\mathbb{E}\big(\exp\big(2\sum_{j=1,2}\sum_{p \leq P_\theta}\frac{\Re((f(p)p^{-ih})^j)}{jp^{j/2}}\big)\big)dh\ll \delta_s,$$ using results on moments of random Euler products. The term involving $\mathbf{1}_{\mathcal{T}_{\text{rand}}(h)}(h)$ can then be further upper bounded by
    \begin{equation}
        \int_{|h| \leq \log^\theta(q)/2}\mathbb{E}\big(\mathbf{1}_{\mathcal{T}_{\text{rand}}(h)}(h)\exp\big(2\big(\sum_{j=1,2}\sum_{p \leq P_\theta}\frac{\Re((f(p)p^{-ih})^j)}{jp^{j(1/2)}}-Y_{P_\theta}(0)\big) + 2Y_{P_\theta}(0)\big)\big)dh. \label{final_form}
    \end{equation}
    We can then apply Cauchy Schwarz to the expectation, allowing us to split $\mathcal{T}_{\text{rand}}(h)$ back into $\mathcal{S}_{\text{rand}}(h)$ and conditioning on $Y_{P_\theta}(0)$ as $\mathbf{1}_{\mathcal{T}_{\text{rand}(h)}} \leq \mathbf{1}_{\mathcal{S}_{\text{rand}}(h)}\mathbf{1}_{Y_{P_\theta}(0) \in [W,W+1]}$. This shows the above satisfies
    \begin{equation*}\begin{split}
        &\int_{|h| \leq \log^\theta(q)/2}\bigg[\big(\mathbb{E}\big(\mathbf{1}_{\mathcal{S}_{\text{rand}}(h)}(h)\exp\big(4\big|\sum_{j=1,2}\sum_{p \leq P_\theta}\frac{\Re((f(p)p^{-ih})^j)}{jp^{j(1/2)}}-Y_{P_\theta}(0)\big|\big)\big)^{1/2} \\ \cdot&\big(\mathbb{E}_W[\exp(4Y_{P_\theta}(0))]\big)^{1/2}\bigg]dh.
        \end{split}
    \end{equation*}
    This is clearly bounded by
    $$\ll \log^{\theta}(q)e^{2(A+W)}.$$
    Now, to handle when $\mathcal{T}_\chi(A)$ does not occur, we apply a chaining argument, since we know that $$\mathbb{P}(\mathcal{T}_\chi(A)) \text{ fails}) \leq \sum_{k=1}^{4\log\log\log(P_\theta)}\mathbb{P}(\mathcal{T}_\chi(2^kA) \text{ but }\mathcal{T}_\chi(2^{k-1}A) \text{ fails})+\mathbb{P}(\mathcal{T}_\chi(((\log\log(P_\theta))^{4\log(2)}A) \text{ fails}).$$
    We can apply the same argument throughout to handle the terms in the sum, only this time when we arrive at the end, we will obtain the upper bound
    $$\ll \log^{\theta}(q)e^{-\frac{2^{2k}A^2}{4c} + 2^{k}A+2W}.$$
    Here we observe that $A(\log\log(P_\theta))^{4\log(2)} \ll A(\log\log(q))^{3}$, which for our choice of $A$, this is clearly $\leq \log^{(1+\theta)/2}(q)$ so Lemma \ref{char_chaining} holds. This ensures the sum is then $\ll \log^\theta(q)e^{2W}$. For the final term, then we can instead apply standard results for moments of random Euler products, which bounds the second term by
    $$\ll Ce^{-A^2(\log\log(P_\theta))^{2\log(2)}/c} =o(1)$$
    using that this decays faster than any power of $\log(q)$ as $q$ is large. We choose $A > \max\{1,\sqrt{c}\}$ a fixed constant, for simplicity.
    \end{proof}
\section{Proof of Theorems \ref{pf_moments_constant} and \ref{pf_moments_meso}} \label{pfm_proof}
This proof will act like a combination of \cite{ah24} and \cite{harper2019partition} for the different parts of the Dirichlet polynomial respectively. Since we have already proved Proposition \ref{chaining}, we can largely follow Harper's treatment of the problem for the log-correlated regime. Once we have handled this contribution, we then are free to extract the log-normal contribution. \par
For this purpose, suppose $P$ is a large quantity in terms of $q$ (to be chosen later) and fix $\theta \in (-1/2,0]$. We define the set of points for each $0 \leq j \leq \log\log(P)+\theta\log\log(q)-1$
\begin{equation}
    h(j) := \max\big\{u \leq h(j-1):u=\frac{n}{(\log(P)/e^j)\log(\log(P)/e^j)} \text{ for some } n \in \mathbb{Z}\big\} \label{discrete_h}
\end{equation}
with $h(-1) = h$. We then define the partial Euler products
\begin{equation}
    I_{\ell,\chi}(h) := \prod_{P^{e^{-(\ell+1)}} < p \leq P^{e^{-\ell}}}\big(1-\frac{\chi(p)}{p^{1/2+ih}}\big)^{-1} \label{char_increment}
\end{equation}
and its random equivalent
\begin{equation}
    I_{\ell,\text{rand}}(h) :=\prod_{P^{e^{-(\ell+1)}} < p \leq P^{e^{-\ell}}}\big(1-\frac{f(p)}{p^{1/2+ih}}\big)^{-1} \label{random_increment}
\end{equation}
where the $f(p)$ are independent Steinhaus random variables. Next, we define the functions $P_q(\theta) = \log(P)\log^\theta(q)$ and $g(j) = C\min\{\sqrt{\log(P_q(\theta))},\frac{1}{1-r}\} + 2\log\log(P_q(\theta)/e^j)$ where $C>0$ is a large absolute constant. Finally, we set $\mathcal{G}_\chi$ as the event such that for all $|h| \leq \log^\theta(q)/2$ and all $0 \leq j \leq \log(P_q(\theta)) - B -1$, we have
$$\big|\sum_{\ell=j}^{\lfloor\log(P_q(\theta))\rfloor-B-1}\log(|I_{\ell,\chi}(h(\ell))|)\big|\leq \log(P_q(\theta)) - j +g(j).$$
Note this is defined in exactly the same way as the event $\mathcal{G}_t$ in \cite{harper2019partition}, only we have $\chi$ in place of $t$ (with $B$ being a parameter required due to the discussion in Section \ref{prob_heur}). We define $\mathcal{G}_{\text{rand}}$ in almost the same way, with $f(p)$ a sequence of independent Steinhaus random variables being used in place of Dirichlet character $\chi$ modulo $q$, only where we use the function $g_{\text{rand}}(j) = g(j) + 19/3$ (the $19/3$ is used to absorb issues arising from extra convergent terms). We then have the following propositions, from which we may deduce Theorems \ref{pf_moments_constant} and \ref{pf_moments_meso}.
\begin{proposition}
    Fix $\theta \in (-1/2,0]$. Then uniformly for $\exp(\log^{1+\theta}(q)) \leq P \leq q^{1/(\log\log(q))^2}$ sufficiently large in terms of $C$ and $2/3 \leq r \leq 1$
    $$\mathbb{P}_q\big(\mathcal{G}_\chi \text{ fails}\big) \ll e^{-2C\min\{\sqrt{\log(P_q(\theta))},\frac{1}{1-r}\}}.$$ \label{bad_chaining_moment}
\end{proposition}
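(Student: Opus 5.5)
We follow the proof of Key Proposition 5 in \cite{harper2019partition} (the bound for $\mathbb{P}(\mathcal{G}_t \text{ fails})$), with $\chi$ in place of $t$ and the moment computations supplied by Lemma \ref{even_moment}. The event $\mathcal{G}_\chi$ only involves the discrete points $h(\ell)$ from (\ref{discrete_h}). So $\mathcal{G}_\chi$ fails only if there are $0 \leq j \leq \log(P_q(\theta))-B-1$ and a point $h(j)$ in the grid at scale $j$ for which the partial sum $\sum_{\ell \geq j}\log|I_{\ell,\chi}(h(\ell))|$ exceeds $\log(P_q(\theta))-j+g(j)$ in absolute value. The number of grid points at level $\ell$ inside $|h|\le \log^\theta(q)/2$ is $\asymp \log^\theta(q)\,(\log(P)/e^\ell)\log(\log(P)/e^\ell)$. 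The chain $h(j),h(j+1),\dots$ is determined by $h(j)$, because it is coarser at each step. Hence a union bound over $j$ and over $h(j)$ costs a factor $\asymp (P_q(\theta)/e^j)\log(P_q(\theta)/e^j)$ at level $j$. This is why the interval length $\log^\theta(q)$ enters only through $P_q(\theta)=\log(P)\log^\theta(q)$.

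For a fixed $j$ and fixed chain, we bound the probability by Markov's inequality applied to a high even moment of $\sum_{\ell\ge j}\log|I_{\ell,\chi}(h(\ell))|$. First we expand $\log|I_{\ell,\chi}| = \Re\sum_p \chi(p)p^{-1/2-ih(\ell)} + \tfrac12\Re\sum_p \chi(p)^2p^{-1-2ih(\ell)} + O(\text{convergent})$. This puts the sum into the form $\Re R(\chi)$ with $R$ supported on primes and prime squares up to $P^{e^{-j}}$, as in Lemma \ref{even_moment}. The contributions of $p^{k}$ with $k\ge3$ are absolutely bounded and can be absorbed, since $B$ is large. The variance proxy $\sum_{r}2v_r|a(r)|^2/r$ equals $\frac12(\log\log P - (\log\log P - j))+O(1) \approx \frac{\log(P_q(\theta))-j}{2}+O(1)$ once we note that only the top $\log\log(P_q(\theta))$ scales carry $h$-dependence, so Lemma \ref{even_moment} gives Gaussian-type moments $k!\,(\sigma_j^2)^k$. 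We choose $k \approx (\log(P_q(\theta))-j+g(j))^2/(2\sigma_j^2)$. Optimising via Stirling then gives a bound of order $\exp(-(\log(P_q(\theta))-j+g(j))^2/(\log(P_q(\theta))-j))$. Expanding the square, this is
$$ \ll e^{-(\log(P_q(\theta))-j)}\,e^{-2g(j)} \ll \frac{e^{j}}{P_q(\theta)}\cdot \frac{e^{-2C\min\{\sqrt{\log P_q(\theta)},1/(1-r)\}}}{\log^4(P_q(\theta)/e^j)}.$$
Multiplying by the union bound factor $(P_q(\theta)/e^j)\log(P_q(\theta)/e^j)$ gives $e^{-2C\min\{\cdots\}}\log^{-3}(P_q(\theta)/e^j)$. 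This is summable over $j$ and yields the claim.

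The main obstacle is the admissibility condition $xU^k<q$ of Lemma \ref{even_moment}. Here $x=1$ and $U \le P^{2e^{-j}}$, so we need $P^{2e^{-j}k}<q$ for $k\asymp \log(P_q(\theta))-j$, up to the $g(j)^2$ term. For small $j$ this requires $\log P\cdot \log\log P \ll \log q$, which the hypothesis $P\le q^{1/(\log\log q)^2}$ guarantees, with room to spare for the extra $g(j)^2/(\log(P_q(\theta))-j)\ll C^2$ contribution. For large $j$, where $\log(P_q(\theta))-j$ is small, we take $k$ of size $g(j)^2$, which is still admissible. The lower bound $P\ge\exp(\log^{1+\theta}q)$ is used only to ensure $P_q(\theta)$ is large. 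A secondary technical point is that the moment of $\Re R(\chi)$, rather than of $|R(\chi)|$, must be controlled. We handle this by bounding $|\Re R|^{2k}\le|R|^{2k}$, which loses only a factor $2^k$ against the Gaussian constant. To avoid this loss in the exponent, we pass to the exact $\Re$-moment computation via orthogonality, as Harper does, so that the correct constant $\frac{1}{2}$ in the variance is retained.
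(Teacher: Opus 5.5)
Your outline follows the paper's proof. It uses a union bound over $j$ and over the level-$j$ points $h(j)$, with the coarser $h(\ell)$ determined by $h(j)$. It Taylor expands into a prime/prime-square polynomial and applies Markov's inequality with a moment of order $\asymp (L+g(j))^2/L$, where $L:=\log(P_q(\theta))-j$. It then uses Lemma \ref{even_moment}, Stirling, and sums over $j$. Using $(L+g)^2/L\ge L+2g$ instead of the paper's case split is fine: the dropped Stirling factor $\sqrt{k}$ is absorbed by $e^{-g(j)^2/L}$ up to $O(\sqrt{L})$.

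The step that fails as written is the union-bound count. You correctly say that level $j$ has $\asymp (P_q(\theta)/e^j)\log(\log(P)/e^j)$ grid points, but you then multiply by $(P_q(\theta)/e^j)\log(P_q(\theta)/e^j)$. Since $\log(\log(P)/e^j)=\log(P_q(\theta)/e^j)+|\theta|\log\log(q)$, the two are not comparable for $\theta<0$ at the top levels, where $L=O(1)$. There the union bound produces $\asymp_{B,C}|\theta|\log\log(q)\,e^{-2C\min\{\cdots\}}$. The factor $e^{-g(j)^2/L}$ does not rescue this when $1/(1-r)=O(1)$ (e.g.\ $r=2/3$), because it is then only a constant. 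Summing over $j$ gives just $\ll (1+|\theta|\log\log(q))\,e^{-2C\min\{\cdots\}}$.

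The paper's proof makes the same identification (it asserts $\log(P_q(\theta)/e^j)\asymp\log(\log(P)/e^j)$). So this is not a departure from the paper, but it is a real hole for $\theta<0$; it disappears at $\theta=0$. There are two natural repairs. One is to use spacing $1/((\log(P)/e^j)\log(P_q(\theta)/e^j))$ in (\ref{discrete_h}); the discretisation errors stay summable since $\sum_{m\ge B}m^{-2}<\infty$. The other is to add a maximal estimate over the $\asymp \log(\log(P)/e^j)/\log(P_q(\theta)/e^j)$ fine points in each coarse cell.

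Three smaller points.
\begin{enumerate}
\item Your variance bookkeeping is garbled: $\frac12(\log\log P-(\log\log P-j))=j/2$. The relevant fact is that $\ell$ runs from $j$ to $\lfloor\log(P_q(\theta))\rfloor-B-1$, so the primes lie in $(P_\theta^{e^{B}},P^{e^{-j}}]$ and $\sum_p 1/p=L+O(1)$.
\item The exponent $e^{-(L+g)^2/L}$ requires $\mathbb{E}^{\text{char}}|R(\chi)|^{2k}\ll k!\,(L+O(1))^k$, with constant $1$ in front of $\sum_p 1/p$. The expression $\sum_r 2v_r|a(r)|^2/r$ as printed in Lemma \ref{even_moment} is $\approx 2L$, not $L/2$. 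Taken literally it yields only $e^{-(L+g)^2/(2L)}$, which loses against the $e^{L}$ grid points. The paper's proof also silently uses the constant-$1$ version.
\item Bounding $|\Re R|\le |R|$ costs nothing in the exponent. The $2k$-th moments of $\Re R$ and $R$ differ by a factor $\asymp\sqrt{k}$, not $2^k$, and both give tail $e^{-x^2/L}$. So no separate real-part moment computation is needed.
\end{enumerate}
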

\begin{proposition}
    Fix $\theta \in (-1/2,0]$. Then uniformly for $\exp(\log^{|\theta|}(q)) \leq P \leq q^{1/(\log\log(q))^6}$ sufficiently large in terms of $C$ and \\ for $\max\{\frac{\log(P)\log\log\log(P)}{\log(q)},\varepsilon_\theta\}<\varepsilon < 1/10$, then uniformly for $|h| \leq \log^{\theta}(q)/2$ and $2/3 \leq r \leq 1$, we have
    \begin{align*}
        \mathbb{E}^{\text{char}}&\big(\mathbf{1}_{\mathcal{G}_\chi}|\sum_{\substack{m \leq q^\varepsilon \\ p|m \Rightarrow P_\theta < p \leq P}}\frac{\chi(m)}{m^{1/2+ih}}|^2|\sum_{\substack{n \leq q^{1/2-2\varepsilon} \\ P^{-}(n) > P}}\frac{\chi(n)}{n^{1/2+ih}}|^2dh\big)^r \\ \ll& \big(C\log^{1+\theta}(q)\min\{1,\frac{1}{(1-r)\sqrt{\log\log(P)}}\}\big)^r.
    \end{align*}\label{good_moment_bound}
\end{proposition}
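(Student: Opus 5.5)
The proposal follows Harper's treatment of the analogous term in \cite{harper2019partition}, run on the random side. The final statement has an $r$-th power outside; I read it as the $r$-th moment over characters of $\mathbf{1}_{\mathcal{G}_\chi}$ times the product of the two squared Dirichlet polynomials.

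\textbf{Step 1: partition of unity and randomisation.} Write $M_\chi(h)$ for the sum over $m$ (primes in $(P_\theta,P]$) and $N_\chi(h)$ for the rough sum over $n$.
\begin{itemize}
\item Split the $j$-range $0\le j\le \log(P_q(\theta))-B-1$ into blocks. On each block, replace the indicator of the barrier condition for $\sum_{\ell\ge j}\log|I_{\ell,\chi}(h(\ell))|$ by a product of the smooth functions $g_{j(i)}$ of Lemma \ref{partition_result}, summed over admissible index tuples.
\item This produces finitely many real parts of prime sums (together with prime squares, after expanding $\log(1-\chi(p)p^{-s})^{-1}$ to second order). Their number $Y$ is $\ll (\log\log P)^2$, counting the discretised points $h(\ell)$.
\item Take $\delta$ a small negative power of $\log\log P$ and $N$ polylogarithmic. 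Then $x P^{400(Y/\delta)^2\log(N\log P)}<q$ holds because $P\le q^{1/(\log\log q)^6}$; this is the reason for the exponent $6$.
\item Apply Lemma \ref{randomisation} with $x=q^{1/2-\varepsilon}$ and $c(n)$ the Dirichlet convolution of the coefficients of $M_\chi$ and $N_\chi$. These coefficients are bounded by $1$ because the two supports are coprime and factor uniquely.
\item The error term is negligible, and undoing the smoothing costs $\delta$ times a second moment, which is harmless.
\end{itemize}
This reduces the problem to bounding $\mathbb{E}\big(\mathbf{1}_{\mathcal{G}_{\rm rand}}|M_{\rm rand}(h)|^2|N_{\rm rand}(h)|^2\big)$ pointwise in $h$. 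The enlarged barrier $g_{\rm rand}=g+19/3$ absorbs the prime-square and smoothing discrepancies.

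\textbf{Step 2: the $r$-th power.} Because of the $r$-th power, I cannot simply randomise one expectation. Instead I would use Hölder to write
\[
\mathbb{E}^{\rm char}X^r \le \Big(\mathbb{E}^{\rm char}\mathbf{1}_{\mathcal{G}_\chi}|M|^2|N|^2\Big)^r .
\]
For $r$ close to $1$ this loses the $(1-r)$ dependence. The saving $\min\{1,\frac{1}{(1-r)\sqrt{\log\log P}}\}$ has to come from the barrier shift $C\min\{\sqrt{\log P_q(\theta)},1/(1-r)\}$ in $g(j)$, via Lemma \ref{ballot} with $a\asymp \min\{\sqrt{\log\log P},1/(1-r)\}$ and $n\asymp\log\log P$. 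With that choice the ballot probability already gives the factor $a/\sqrt n$, and the $r$-th power is then only applied at the end.

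\textbf{Step 3: the random side.}
\begin{itemize}
\item By independence of $f$ on primes $>P$ and on primes in $(P_\theta,P]$, condition on $(f(p))_{p\le P}$. The rough part then contributes $\mathbb{E}|N_{\rm rand}|^2\ll \log q/\log P$ by orthogonality.
\item For the smooth part, first replace the truncated sum by the Euler product $F$ over $(P_\theta,P]$, exactly as in Lemma \ref{exp_logn}. The truncation error is controlled by Rankin's trick, using $\varepsilon\log q/\log P\ge\log\log\log P$.
\item Then $|F|^2=\prod_\ell|I_{\ell,\rm rand}(h)|^2$, and the scales $\ell$ run up to $\log\log P-|\theta|\log\log q$, which equals $\log P_q(\theta)$ in the exponent count.
\item Perform the Girsanov-type tilt by $|F|^2$ as in Harper's Key Propositions. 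The quantity $\mathbb{E}|F|^2\asymp \log P/\log P_\theta$ times the tilted probability of the barrier event is, by Lemma \ref{ballot}, $\ll a/\sqrt{\log\log P}$.
\end{itemize}
Multiplying gives
\[
\frac{\log q}{\log P}\cdot\frac{\log P}{\log P_\theta}\cdot\frac{a}{\sqrt{\log\log P}} = \log^{1+\theta}(q)\,\frac{a}{\sqrt{\log\log P}}
\]
per unit $h$, which is the claimed bound.

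The main obstacle is Step 1, specifically the barrier being imposed at the discretised points $h(\ell)$ rather than at $h$ itself. As in \cite{harper2019partition}, I would show that $\log|I_\ell(h)|-\log|I_\ell(h(\ell))|$ is $O(1)$ in sub-Gaussian mean, uniformly, so the extra $2\log\log(P_q(\theta)/e^j)$ in $g(j)$ absorbs it. I would also need to check that the parameter constraint in Lemma \ref{randomisation} holds with $Y$ growing, which is where the upper limit on $P$ is used.
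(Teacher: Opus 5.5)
Your route is the same as the paper's. You use H\"older to move the $r$-th power outside, pass from $\mathcal{G}_\chi$ to the pointwise event $\mathcal{G}_\chi(h)$, smooth with Lemma \ref{partition_result}, randomise with Lemma \ref{randomisation}, undo the smoothing, and then bound the random barrier expectation. The paper imports that last estimate from Harper as Lemma \ref{log_cor_barrier_moment}. Your Step 3 is a sound sketch of it: independence of the rough part, Euler-product replacement via Rankin using $\varepsilon\log q/\log P>\log\log\log P$, the tilt by $|F|^2$, and the ballot theorem with $a=C\min\{\sqrt{\log P_q(\theta)},1/(1-r)\}$.

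One caveat on Step 3. The ballot step runs over $n\asymp\log P_q(\theta)=\log\log P-|\theta|\log\log q$ scales. This is $\asymp\log\log P$ only for $P$ near the top of the range, e.g.\ the $P=q^{1/(\log\log q)^8}$ used later. Near $P\approx P_\theta$ the argument gives no $\sqrt{\log\log P}$ saving, a restriction the paper's statement shares.

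The step that fails as written is the parameter choice in Step 1.

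\emph{Why the smoothing loss is not harmless.} Undoing the smoothing is not just an additive $\delta\,\mathbb{E}|M|^2|N|^2\asymp\delta\log^{1+\theta}(q)$. On the character side you must majorise $\mathbf{1}_{\mathcal{G}_\chi(h)}$, and each partition-of-unity sum $\sum_{|i|\le N_j}g_i$ is only $\ge 1-\delta$ on its interval. So you pay a multiplicative factor $(1-\delta)^{-Y}$, which is the paper's $(1+\delta)^{\log(P_q(\theta))}$. You therefore need $Y\delta=O(1)$. You also need $\delta\ll(\log\log P)^{-1/2}$, because at $r=2/3$ the target is only $\asymp\log^{1+\theta}(q)/\sqrt{\log\log P}$. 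A small negative power of $\log\log P$ violates both requirements.

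\emph{Why your count of $Y$ makes it worse.} If you really carry $Y\asymp(\log\log P)^2$, no choice of $\delta$ works. The requirement $Y\delta=O(1)$ forces $\delta\ll(\log\log P)^{-2}$, so $(Y/\delta)^2\log(N\log P)\asymp(\log\log P)^9$. The condition of Lemma \ref{randomisation} then fails already for $P=q^{1/(\log\log q)^8}$.

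\emph{The fix.} Count $Y$ correctly. For fixed $h$ the points $h(\ell)$ are determined by $h$, so $\mathcal{G}_\chi(h)$ is one linear statistic per level $j$, and $Y\asymp\log P_q(\theta)\le\log\log P$. Take $\delta=1/\log\log P$ as in the paper. Then $(1-\delta)^{-Y}\ll 1$, the additive loss is $\ll\log^{1+\theta}(q)/\log\log P$, and the exponent in Lemma \ref{randomisation} is $\asymp(\log\log P)^5$. So $P\le q^{1/(\log\log q)^6}$ does suffice. The paper's own check carries $(\log\log P)^7$, and for that reason takes $P=q^{1/(\log\log q)^8}$.
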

We note that we will not use Proposition \ref{good_moment_bound} in this exact form, but it is close enough to the version we will apply (which will be in terms of random multiplicative functions, which we transfer to in the proof). Equipped with Propositions \ref{chaining}, \ref{bad_chaining_moment} and \ref{good_moment_bound} and , we can prove our theorems on the moment of moments. We finally note that it is sufficient to establish our bounds on the range $r \in[2/3,1]$ as then it suffices to apply Holder's inequality to obtain the result for smaller non negative $r$.
\begin{proof}[Proof of Theorems \ref{pf_moments_constant} and \ref{pf_moments_meso}, assuming Propositions \ref{bad_chaining_moment} and\ref{good_moment_bound}]
    This proof follows in three parts: approximate $|L(1/2+ih,\chi)|^2$ as a product of three Dirichlet series, extract the conditioned part and then handle the log-correlated part as in \cite{harper2019partition}. The first step is not too difficult. An application of Lemma \ref{Dirichlet_AFE} (and dropping the conditioning on $Y_{P_\theta}(0)$) gives
    \begin{align*}
        &\mathbb{E}_W^{\text{char}}\big(\int_{|h| \leq \log^{\theta}(q)/2}|L(1/2+ih,\chi)|^2dh\big)^r 
        \ll \mathbb{E}_W^{\text{char}}\big(\int_{|h| \leq \log^\theta(q)/2}|\sum_{n \leq \sqrt{q/2\pi}}\frac{\chi(n)}{n^{1/2+ih}}|^2dh\big)^r + q^{-r/2}.
    \end{align*}
    Applying the triangle inequality to the first term in the last line provides an upper bound of
    \begin{align*}
        &\mathbb{E}^{\text{char}}\big(\int_{|h| \leq \log^{\theta}(q)/2}|\sum_{n \leq \sqrt{q/2\pi}}\frac{\chi(n)}{n^{1/2+ih}}-\sum_{\substack{m \leq q^{\varepsilon_\theta} \\ P^+(m) \leq P_\theta}}\frac{\chi(m)}{m^{1/2+ih}}\sum_{\substack{n \leq q^{1/2-2\varepsilon_\theta} \\ P^-(n) > P_\theta}}\frac{\chi(n)}{n^{1/2+ih}}|^2dh\big)^r \\ 
        +& \mathbb{E}_W^{\text{char}}\big(\int_{|h| \leq \log^\theta(q)/2}|\sum_{\substack{m \leq q^{\varepsilon_\theta} \\ P^+(m) \leq P_\theta}}\frac{\chi(m)}{m^{1/2+ih}}\sum_{\substack{n \leq q^{1/2-2\varepsilon_\theta} \\ P^-(n) > P_\theta}}\frac{\chi(n)}{n^{1/2+ih}}|^2dh\big)^r
    \end{align*}
    On the first term, we can ignore the conditioning on the value of $Y_{P_\theta}(0)$, and apply Holder's inequality on the sum of Dirichlet characters to show that it is
    $\ll \big(\log^{1+2\theta}(q)/(\log\log(q))^2\big)^r$,
    which is more than acceptable for our theorems using Lemma \ref{factored_form}. We repeat this to access our product of three Dirichlet polynomials. We see 
    \begin{align*}
        &\mathbb{E}_W^{\text{char}}\big(\int_{|h| \leq \log^\theta(q)/2}|\sum_{\substack{m \leq q^{\varepsilon_\theta} \\ P^+(m) \leq P_\theta}}\frac{\chi(m)}{m^{1/2+ih}}\sum_{\substack{n \leq q^{1/2-2\varepsilon_\theta} \\ P^-(n) > P_\theta}}\frac{\chi(n)}{n^{1/2+ih}}|^2dh\big)^r \\
        \ll& \mathbb{E}_W^{\text{char}}\big(\int_{|h| \leq \frac{\log^\theta(q)}{2}}|\sum_{\substack{m \leq q^{\varepsilon_\theta} \\ P^+(m) \leq P_\theta}}\frac{\chi(m)}{m^{1/2+ih}}|^2|\sum_{\substack{n \leq q^{1/2-2\varepsilon_\theta} \\ P^{-}(n) >P_\theta}}\frac{\chi(n)}{n^{\sigma+ih}}-\sum_{\substack{k \leq q^{\varepsilon} \\ p|k \Rightarrow P_\theta < p \leq P}}\frac{\chi(k)}{k^{1/2+ih}}\sum_{\substack{n \leq q^{1/2-2\varepsilon} \\ P^{-}(n) > P}}\frac{\chi(n)}{n^{\sigma+ih}}|^2dh\big)^r \\
        +& \mathbb{E}_W^{\text{char}}\big(\int_{|h| \leq \log^\theta(q)/2}|\sum_{\substack{m \leq q^{\varepsilon_\theta} \\ P^+(m) \leq P_\theta}}\frac{\chi(m)}{m^{1/2+ih}}\sum_{\substack{k \leq q^{\varepsilon} \\ p|k \Rightarrow P_\theta < p \leq P}}\frac{\chi(k)}{k^{1/2+ih}}\sum_{\substack{n \leq q^{1/2-2\varepsilon} \\ P^{-}(n) > P}}\frac{\chi(n)}{n^{\sigma+ih}}|^2dh\big)^r.
    \end{align*}
    The first term requires us to apply Holder's inequality so we can take the expectation inside the integral, at which point we use Lemma \ref{partition_result} to approximate the conditioning on the small primes by a smooth partition of unity $g$, and then we transfer over to a statement about random multiplicative functions using Lemma \ref{randomisation}. Since we are now in the random multiplicative function setting, we can expand out the definition of $g \leq \mathbf{1}_{Y_{P_\theta}(0) \in [W,W+1]}+\delta_m$ for $\delta_m = \frac{1}{(\log\log(q))^2}$ say and notice that $\mathbf{1}_{Y_{P_\theta}(0) \in [W,W+1]}$ only affects the sum over the very small primes. This allows us to use the independence of the $f(p)$ to note that
    \begin{equation*}\begin{split}
        &\mathbb{E}_W\big(|\sum_{\substack{m \leq q^{\varepsilon_\theta} \\ P^+(m) \leq P_\theta}}\frac{f(m)}{m^{1/2+ih}}|^2|\sum_{\substack{n \leq q^{1/2-2\varepsilon_\theta} \\ P^{-}(n) >P_\theta}}\frac{f(n)}{n^{\sigma+ih}}-\sum_{\substack{k \leq q^{\varepsilon} \\ p|k \Rightarrow P_\theta < p \leq P}}\frac{f(k)}{k^{1/2+ih}}\sum_{\substack{n \leq q^{1/2-2\varepsilon} \\ P^{-}(n) > P}}\frac{f(n)}{n^{\sigma+ih}}|^2\big) \\
        =& \mathbb{E}_W\big(|\sum_{\substack{m \leq q^{\varepsilon_\theta} \\ P^+(m) \leq P_\theta}}\frac{f(m)}{m^{1/2+ih}}|^2\big)\mathbb{E}\big(|\sum_{\substack{n \leq q^{1/2-2\varepsilon_\theta} \\ P^{-}(n) >P_\theta}}\frac{f(n)}{n^{\sigma+ih}}-\sum_{\substack{k \leq q^{\varepsilon} \\ p|k \Rightarrow P_\theta < p \leq P}}\frac{f(k)}{k^{1/2+ih}}\sum_{\substack{n \leq q^{1/2-2\varepsilon} \\ P^{-}(n) > P}}\frac{f(n)}{n^{\sigma+ih}}|^2\big).
    \end{split}     
    \end{equation*}
    Then we can apply Proposition \ref{chaining} and Lemma \ref{logcor_factored_form} to show that
    $$\ll \big(e^{2W}\log^{1+2\theta}(q)/(\log\log(q))^2\big)^r.$$
    Now for $1-\frac{1}{\sqrt{\log\log(q)}} \leq r \leq 1$, we can apply Holder's inequality to the sum over Dirichlet characters, and we find this is
    \begin{align*}
        &\mathbb{E}_W^{\text{char}}\big(\int_{|h| \leq \log^\theta(q)/2}|\sum_{\substack{m \leq q^{\varepsilon_\theta} \\ P^+(m) \leq P_\theta}}\frac{\chi(m)}{m^{1/2+ih}}\sum_{\substack{k \leq q^{\varepsilon} \\ p|k \Rightarrow P_\theta < p \leq P}}\frac{\chi(k)}{k^{1/2+ih}}\sum_{\substack{n \leq q^{1/2-2\varepsilon} \\ P^{-}(n) > P}}\frac{\chi(n)}{n^{\sigma+ih}}|^2dh\big)^r \\
        \ll&\big(\int_{|h| \leq \log^\theta(q)/2}\mathbb{E}_W^{\text{char}}|\sum_{\substack{m \leq q^{\varepsilon_\theta} \\ P^+(m) \leq P_\theta}}\frac{\chi(m)}{m^{1/2+ih}}\sum_{\substack{k \leq q^{\varepsilon} \\ p|k \Rightarrow P_\theta < p \leq P}}\frac{\chi(k)}{k^{1/2+ih}}\sum_{\substack{n \leq q^{1/2-2\varepsilon} \\ P^{-}(n) > P}}\frac{\chi(n)}{n^{\sigma+ih}}|^2dh\big)^r \\
        \ll& \big(\int_{|h| \leq \log^\theta(q)/2}\mathbb{E}_W|\sum_{\substack{m \leq q^{\varepsilon_\theta} \\ P^+(m) \leq P_\theta}}\frac{f(m)}{m^{1/2+ih}}|^2\mathbb{E}|\sum_{\substack{k \leq q^{\varepsilon} \\ p|k \Rightarrow P_\theta < p \leq P}}\frac{f(k)}{k^{1/2+ih}}\sum_{\substack{n \leq q^{1/2-2\varepsilon} \\ P^{-}(n) > P}}\frac{f(n)}{n^{\sigma+ih}}|^2dh\big)^r \\
        \ll& \big(\log^{1+\theta}(q)\int_{|h| \leq \log^\theta(q)/2}\mathbb{E}_W|\sum_{\substack{m \leq q^{\varepsilon_\theta} \\ P^+(m) \leq P_\theta}}\frac{f(m)}{m^{1/2+ih}}|^2dh\big)^r. 
    \end{align*}
    This an appropriate form to apply Proposition \ref{chaining}, and the above then is
    $\ll\bigg(e^{2W}\log^{1+2\theta}(q)\bigg)^r$,
    as required.\par
    Now, for the remaining values of $r \in [2/3,1-\frac{1}{\sqrt{\log\log(q)}}]$, we use an argument first used in \cite{aharper201} which follows by multiple applications of Holder's inequality. For the remaining $r$, we set
    \begin{align*}
        R(\alpha) := \sup_{1-2\alpha \leq r \leq 1-\alpha}\bigg[&\mathbb{E}_W^{\text{char}}\big(\int_{|h|\leq \log^\theta(q)/2}|\sum_{\substack{m \leq q^{\varepsilon_\theta} \\ P^+(m) \leq P_\theta}}\frac{\chi(m)}{m^{1/2+ih}}\sum_{\substack{k \leq q^{\varepsilon} \\ p|k \Rightarrow P_\theta < p \leq P}}\frac{\chi(k)}{k^{1/2+ih}}\sum_{\substack{n \leq q^{1/2-2\varepsilon} \\ P^{-}(n) > P}}\frac{\chi(n)}{n^{\sigma+ih}}|^2dh\big)^r \\
        \cdot& \big(\frac{(1-r)\sqrt{\log\log(q)}}{\log^{1+\theta}(q)}\big)^r\bigg].
    \end{align*}
    We split this according to whether $\mathcal{G}_\chi$ holds or not. On this event holding, we apply Holder's inequality, approximate the indicator $\mathbf{1}_{Y_{P_\theta}(0) \cap \mathcal{G}_\chi}$ by a smooth function $g$ provided to us by Lemma \ref{partition_result} and then can pass to random multiplicative functions by Lemma \ref{randomisation}. Then we can upper bound $g$ from Lemma \ref{partition_result} (using $\delta=1/(\log\log(P))^2$ say), and apply independence of conditioning on the smooth and log-correlated part of the sums with orthogonality so we can finally apply Proposition \ref{good_moment_bound}. Putting all these steps together yields
    \begin{align*}
        &\mathbb{E}_W^{\text{char}}\big(\mathbf{1}_{\mathcal{G}_\chi}\int_{|h|\leq \log^\theta(q)/2}|\sum_{\substack{m \leq q^{\varepsilon_\theta} \\ P^+(m) \leq P_\theta}}\frac{\chi(m)}{m^{1/2+ih}}\sum_{\substack{k \leq q^{\varepsilon} \\ p|k \Rightarrow P_\theta < p \leq P}}\frac{\chi(k)}{k^{1/2+ih}}\sum_{\substack{n \leq q^{1/2-2\varepsilon} \\ P^{-}(n) > P}}\frac{\chi(n)}{n^{\sigma+ih}}|^2dh\big)^r \\
        \ll &\big(\int_{|h|\leq \log^\theta(q)/2}\mathbb{E}_W|\sum_{\substack{m \leq q^{\varepsilon_\theta} \\ P^+(m) \leq P_\theta}}\frac{f(m)}{m^{1/2+ih}}|^2\mathbb{E}\big(\mathbf{1}_{\mathcal{G}(h)}|\sum_{\substack{k \leq q^{\varepsilon} \\ p|k \Rightarrow P_\theta < p \leq P}}\frac{f(k)}{k^{1/2+ih}}\sum_{\substack{n \leq q^{1/2-2\varepsilon} \\ P^{-}(n) > P}}\frac{f(n)}{n^{\sigma+ih}}|^2\big)dh\big)^r \\
        \ll&\big(C\log^{1+\theta}(q)\min\{1,\frac{1}{(1-r)\sqrt{\log\log(P)}}\}\int_{|h| \leq \log^\theta(q)/2}\mathbb{E}_W|\sum_{\substack{m \leq q^{\varepsilon_\theta} \\ P^+(m) \leq P_\theta}}\frac{f(m)}{m^{1/2+ih}}|^2dh\big)^r
    \end{align*}
    where we achieved the last line by applying Proposition \ref{good_moment_bound}. Once again, we look to Proposition \ref{chaining}, so we obtain
    \begin{align*}
        &\mathbb{E}_W^{\text{char}}\big(\mathbf{1}_{\mathcal{G}_\chi}\int_{|h|\leq \log^\theta(q)/2}|\sum_{\substack{m \leq q^{\varepsilon_\theta} \\ P^+(m) \leq P_\theta}}\frac{\chi(m)}{m^{1/2+ih}}\sum_{\substack{k \leq q^{\varepsilon} \\ p|k \Rightarrow P_\theta < p \leq P}}\frac{\chi(k)}{k^{1/2+ih}}\sum_{\substack{n \leq q^{1/2-2\varepsilon} \\ P^{-}(n) > P}}\frac{\chi(n)}{n^{\sigma+ih}}|^2dh\big)^r \\
        \ll& \big(Ce^{2(A+W)}\log^{1+\theta}(q)\min\{1,\frac{1}{(1-r)\sqrt{\log\log(q)}}\}\big)^r \\ \ll& C\big(e^{2W}\log^{1+\theta}(q)\min\{1,\frac{1}{(1-r)\sqrt{\log\log(q)}}\}\big)^{1-\alpha}.
    \end{align*}
    Now, when we have $\mathcal{G}_\chi$ does not hold, then we set $r' = (1+r)/2$, so we have $1-\alpha \leq r' \leq 1-\alpha/2$. Then we apply Holder's inequality with exponents $r'/(r'-r)$ and $r'/r$ to get
    \begin{align*}
        &\mathbb{E}_W^{\text{char}}\big(\mathbf{1}_{\mathcal{G}_\chi \text{ fails}}\int_{|h|\leq \log^\theta(q)/2}|\sum_{\substack{m \leq q^{\varepsilon_\theta} \\ P^+(m) \leq P_\theta}}\frac{\chi(m)}{m^{1/2+ih}}\sum_{\substack{k \leq q^{\varepsilon} \\ p|k \Rightarrow P_\theta < p \leq P}}\frac{\chi(k)}{k^{1/2+ih}}\sum_{\substack{n \leq q^{1/2-2\varepsilon} \\ P^{-}(n) > P}}\frac{\chi(n)}{n^{\sigma+ih}}|^2dh\big)^r \\
        \ll& \big(\mathbb{E}^{\text{char}}\mathbf{1}_{\mathcal{G}_\chi \text{ fails}}\big)^{(r'-r)/r'} \\
        \cdot& \big(\mathbb{E}_W^{\text{char}}\big(\int_{|h|\leq \log^\theta(q)/2}|\sum_{\substack{m \leq q^{\varepsilon_\theta} \\ P^+(m) \leq P_\theta}}\frac{\chi(m)}{m^{1/2+ih}}\sum_{\substack{k \leq q^{\varepsilon} \\ p|k \Rightarrow P_\theta < p \leq P}}\frac{\chi(k)}{k^{1/2+ih}}\sum_{\substack{n \leq q^{1/2-2\varepsilon} \\ P^{-}(n) > P}}\frac{\chi(n)}{n^{\sigma+ih}}|^2dh\big)^{r'}\big)^{r/r'}.
    \end{align*}
    The contribution from the first term is $\ll e^{-C/2}$ using that $(r'-r)/r' \geq \alpha/2$ and from Proposition \ref{bad_chaining_moment}, we have $\mathbb{E}^{\text{char}}\big(\mathbf{1}_{\mathcal{G}_\chi \text{ fails}}\big) \ll e^{-C/\alpha}$. In particular, we have
    $$R(\alpha) \ll Ce^{2W} + e^{-C/2}(e^{2W}+R(\alpha/2)) \ll Ce^{2W} + e^{-C/2}R(\alpha/2).$$
    Successive applications of the above replacing $\alpha$ with $\alpha/2, \alpha/4,...$ gain the recursive bound
    $$R(\alpha) \ll e^{2W}+R(1/\sqrt{\log\log(q)}).$$
    From our previous discussion on the regime $1-\frac{1}{\sqrt{\log\log(q)}} \leq r \leq 1$, we know $R(1/\sqrt{\log\log(q)}) \ll e^{2W}$, which allows us to conclude, once we have made a suitable choice of $P$ and $\varepsilon$. We can choose $P=q^{1/(\log\log(q))^8}$ and $\varepsilon = \frac{1}{(\log\log(P))^2}+\varepsilon_\theta$, which are more than sufficient for our purposes.
\end{proof}
\subsection{Proof of the Proposition \ref{bad_chaining_moment}  and \ref{good_moment_bound}} As Proposition \ref{bad_chaining_moment} is slightly simpler, we begin there.
\begin{proof}[Proof of Proposition \ref{bad_chaining_moment}]
    For convenience, it is much easier to write $\mathcal{G}_\chi$ as a statement of a sum of logarithms of partial Euler products. From the union bound over all the $h(j)$, we can upper bound $\mathbb{P}_{q}(\mathcal{G}_\chi \text{ fails})$ by
    \begin{equation}
        \sum_{j=0}^{\lfloor\log(P_q(\theta))\rfloor -B-1}\sum_{h(j)}\mathbb{P}_q\big(\big|\sum_{\ell=j}^{\lfloor\log(P_q(\theta))\rfloor -B-1}\log(|I_{\ell,\chi}(h(\ell))|)\big|>\log(P_q(\theta)) -j + g(j)\big) \label{union_fail}
    \end{equation}
    for $P_q(\theta) = \log(P)\log^\theta(q)$ (defined for brevity). The second sum is over all the possible positions of the $h(j)$ over the interval $|h| \leq \log^\theta(q)$, where there are \\ $\ll (P_q(\theta)/e^j)(\log(\log(P)/e^j)$ such points, and once we know $h(j)$ then the points $h(\ell)$ are uniquely determined for $\ell \geq j$. Taylor expanding $\log(|I_\ell,\chi(h)|)$ reveals the sum over the $\ell$ is equal to
    \begin{align*}
        -\Re&\big(\sum_{\ell=j}^{\lfloor\log(P_q(\theta)\rfloor - B -1}\sum_{P^{e^{-(\ell+1)}} < p \leq P^{e^{-\ell}}}\log(1-\frac{\chi(p)}{p^{1/2+ih(\ell)}})\big) \\
        =\Re&\big(\sum_{\ell=j}^{\lfloor\log(P_q(\theta)\rfloor - B -1}\sum_{\substack{P^{e^{-(\ell+1)}} < p \leq P^{e^{-\ell}} \\ v=1,2}}\frac{\chi(p)}{p^{1/2+ih(\ell)}}+\frac{(\chi(p))^2}{p^{1+2ih(\ell)}}+O(p^{-3/2})\big).
    \end{align*}
    Consequently, if $\mathcal{G}_\chi$ fails, then we must have
    $$\big|\sum_{\ell=j}^{\lfloor\log(P_q(\theta)\rfloor - B -1}\sum_{\substack{P^{e^{-(\ell+1)}} < p \leq P^{e^{-\ell}} \\ v=1,2}}\big(\frac{\chi(p)}{p^{1/2+ih(\ell)}}+\frac{(\chi(p))^2}{p^{1+2ih(\ell)}}\big)\big| > \log(P_q(\theta)) + g(j)-j+O(1).$$
    An application of Markov's inequality to the power $$r=2\big\lfloor\frac{(\log(P_q(\theta))+g(j)-j+O(1))^2}{\log(P_q(\theta))-g(j)+O(1)}\big\rfloor$$
    combined with Stirling approximation yields
    \begin{align*}
        \sqrt{r}\exp\big(-\big\lfloor\frac{(\log(P_q(\theta))+g(j)-j+O(1))^2}{\log(P_q(\theta))-g(j)+O(1)}\big\rfloor\big) \ll q\sqrt{r}\frac{e^j}{P_q(\theta)}e^{-2g(j)-\frac{(g(j)+O(1))^2}{\log(P_q(\theta)-j+O(1)}} \numberthis \label{excep_set}
    \end{align*}
    where this has been achieved by an application of Lemma \ref{even_moment}. One deduces that $r$ is close to optimal by minimising taking the $2r$-th moment in Markov's inequality and balancing this with the Stirling approximation. Now, we split into the cases dependent on the size of $2C\min\{\sqrt{\log(P_q(\theta)},\frac{1}{1-q}\}$. Now if $C\min\{\sqrt{\log(P_q(\theta))},\frac{1}{1-q}\} < \log(P_q(\theta))-j$, then $r \asymp \log(P_q(\theta))-j$ clearly, and we obtain the bound $$\ll \frac{e^j}{P_q(\theta)}\frac{1}{\log^{7/2}(P_q(\theta)/e^j)}e^{-2C\min\{\sqrt{\log(P_q(\theta))},\frac{1}{1-q}\}}.$$ Otherwise, $C\min\{\sqrt{\log(P_q(\theta))},\frac{1}{1-q}\} \geq \log(P_q(\theta))-j$, then we have that one can bound equation (\ref{excep_set}) by $$\frac{e^j}{P_q(\theta)}\frac{1}{\log^{5}(P_q(\theta)/e^j)}e^{-3C\min\{\sqrt{\log(P_q(\theta)},\frac{1}{1-q}\}}.$$
    Regardless, we see that equation (\ref{excep_set}) is $$\ll q\frac{e^j}{P_q(\theta)}\frac{1}{\log^{7/2}(P_q(\theta)/e^j)}e^{-2C\min\{\sqrt{\log(P_q(\theta))},\frac{1}{1-q}\}}.$$. Putting this back into equation (\ref{union_fail}), we see
    \begin{align*}
        \mathbb{P}_q(\mathcal{G}_\chi \text{ fails}) \ll& \sum_{j=0}^{\lfloor\log(P_q(\theta))\rfloor -B-1}\sum_{h(j)}\frac{e^j}{P_q(\theta)}\frac{1}{\log^{7/2}(P_q(\theta)/e^j)}e^{-2C\min\{\sqrt{\log(P_q(\theta))},\frac{1}{1-q}\}} \\
        \ll& \sum_{j=0}^{\lfloor\log(P_q(\theta))\rfloor -B-1}\frac{1}{\log^{5/2}(P_q(\theta)/e^j)}e^{-2C\min\{\sqrt{\log(P_q(\theta))},\frac{1}{1-q}\}} \\
        \ll& e^{-2C\min\{\sqrt{\log(P_q(\theta))},\frac{1}{1-q}\}},
    \end{align*}
    using that $\log(P_q(\theta)/e^j) \asymp \log(\log(P)/e^j)$ as $\log^{1-\delta}(q) = o(\log(P))$ for any $\delta > 0$ with our choice of $P$.
\end{proof}
In order to prove Proposition \ref{good_moment_bound}, we will need another lemma, which can be imported from \cite{harper2019partition} with very minimal tweaks required (they are the same as the identifications used to prove Lemma \ref{logcor_factored_form} from Lemma \ref{factored_form}). We let $\mathcal{G}_{\text{rand}}(h)$ the event where $\mathcal{G}_{\text{rand}}$ occurs for a specific value $h$ (rather than occurring for each $h$ in the range).
\begin{lemma}
    Let $f$ be a Steinhaus multiplicative function and fix $\theta \in (-1/2,0]$. Then for $P_\theta < P \leq \sqrt{q}$ and $\varepsilon_\theta < \varepsilon \leq 1/10$, $2/3 \leq r \leq 1$ and $|h| \leq \log^\theta(q)/2$, we have
    \begin{align*}
        &\mathbb{E}\big(\mathbf{1}_{\mathcal{G}_{\text{rand}}(h)}|\sum_{\substack{m \leq q^\varepsilon \\ p|m \Rightarrow P_\theta < p \leq P}}\frac{f(m)}{m^{1/2+ih}}|^2|\sum_{\substack{n \leq q^{1/2-2\varepsilon} \\ P^-(n) > P}}\frac{f(n)}{n^{1/2+ih}}|^2\big) \\ &\ll \log^{1+\theta}(q)\big(C\min\big\{1,\frac{1}{(1-r)\sqrt{\log\log(P)}}\big\}+e^{-\frac{\varepsilon\log(q)}{\log(P)}}\big).
    \end{align*} \label{log_cor_barrier_moment}
\end{lemma}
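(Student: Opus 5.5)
The plan is to follow Harper's proof of the corresponding key proposition in \cite{harper2019partition}, which establishes the same barrier moment bound for $t$-aspect Euler products, and to adjust only the ranges of primes. There are three steps.

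\textbf{Step 1: decouple the two sums.} The rough part $\sum_{n \leq q^{1/2-2\varepsilon},\,P^-(n)>P} f(n)n^{-1/2-ih}$ involves only primes $>P$. Both $\mathcal{G}_{\text{rand}}(h)$ and the middle sum over $m$ (with $p|m \Rightarrow P_\theta<p\leq P$) involve only primes $\leq P$. By independence of the $f(p)$, the expectation therefore factors, and orthogonality gives
$$\mathbb{E}\big|\sum_{\substack{n\le q^{1/2-2\varepsilon}\\P^-(n)>P}}\frac{f(n)}{n^{1/2+ih}}\big|^2=\sum_{\substack{n\le q^{1/2-2\varepsilon}\\P^-(n)>P}}\frac1n\ll \frac{\log q}{\log P}.$$
It then suffices to prove
$$\mathbb{E}\big(\mathbf{1}_{\mathcal{G}_{\text{rand}}(h)}|\sum_m\cdots|^2\big)\ll \log(P)\log^{\theta}(q)\Big(C\min\{1,\tfrac{1}{(1-r)\sqrt{\log\log P}}\}+e^{-\varepsilon\log q/\log P}\Big).$$

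\textbf{Step 2: replace the truncated sum by the Euler product.} The next step is to drop the condition $m\le q^{\varepsilon}$. Completing the sum to all $m$ with prime factors in $(P_\theta,P]$ costs, by Rankin's trick exactly as in Lemma \ref{exp_logn} and Lemma \ref{factored_form}, an error of
$$\ll \frac{\log P}{\log P_\theta}\,\log(P)\,e^{-\varepsilon\log q/\log P}.$$
This is compatible with the second term in the claim, since $\log P_\theta=\log^{-\theta}(q)$. The completed sum equals $F(h):=\prod_{P_\theta<p\le P}(1-f(p)p^{-1/2-ih})^{-1}$. Its squared modulus is $\prod_{\ell}|I_{\ell,\text{rand}}(h)|^2$, taken over the dyadic-exponential ranges $\ell\le \log\log P-\log\log P_\theta=\log(P_q(\theta))$, which is exactly the range indexed in $\mathcal{G}_{\text{rand}}$.

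\textbf{Step 3: the barrier computation, which is the main obstacle.} We need
$$\mathbb{E}\big(\mathbf{1}_{\mathcal{G}_{\text{rand}}(h)}|F(h)|^2\big)\ll \frac{C\,\log(P)\log^\theta(q)}{\min\{\cdot\}^{-1}}.$$
Here $\mathbb{E}|F(h)|^2\asymp \log P/\log P_\theta=P_q(\theta)$, and the barrier must save the factor $\min\{1,\frac{C}{(1-r)\sqrt{\log\log P}}\}$. I would proceed as follows.

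\begin{enumerate}
\item Perform the exponential tilt by $|F(h)|^2$. Under the tilted measure the $f(p)$ become independent with explicitly computable laws (Lemma 4 of \cite{aharper201}). The increments $\log|I_{\ell,\text{rand}}(h)|$ then acquire drift $1$ and variance $\asymp 1$.
\item Observe that the barrier $\sum_{\ell\ge j}\log|I_\ell|\le \log(P_q(\theta))-j+g_{\text{rand}}(j)$ becomes, after recentering, a random walk staying below $g(j)=C\min\{\sqrt{\log P_q(\theta)},\frac1{1-r}\}+2\log\log(P_q(\theta)/e^j)$.
\item Apply Lemma \ref{ballot} (in the Euler-product form of Lemma 5 of \cite{aharper201}), discarding the last $B$ small-prime blocks. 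This gives probability
$$\ll \min\Big\{1,\frac{C\min\{\sqrt{\log P_q(\theta)},\frac1{1-r}\}}{\sqrt{\log P_q(\theta)}}\Big\}.$$
\end{enumerate}

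Since $\log P_q(\theta)\asymp\log\log P$ for the admissible $P$, this is the claimed saving.

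The delicate point is the discretisation $h(\ell)$ in the definition of $\mathcal{G}$: the event uses $I_\ell(h(\ell))$ rather than $I_\ell(h)$. I would first try to handle this as Harper does. Since $|h-h(\ell)|\le e^{\ell}/(\log P\log(\log P/e^\ell))$, the tilted means and covariances change only by $O(1/\log(\log P/e^\ell))$. The function $2\log\log(P_q(\theta)/e^j)$ in $g$, together with the extra $19/3$ in $g_{\text{rand}}$, absorbs these perturbations; they are summable relative to the allowed $h(j)$ fluctuations of $10\log j$ in Lemma \ref{ballot}.

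Multiplying the three steps together then gives the statement.
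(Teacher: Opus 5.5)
Your three steps follow the paper's route, in more detail than the paper's own sketch. You factor off the $P$-rough sum by independence, replace the $(P_\theta,P]$-supported sum by the genuine Euler product, and run Harper's tilted ballot argument from Key Proposition~1 of \cite{aharper201}. The $P_\theta$-roughness is what removes the factor $\log P_\theta$ from both the plain and the barrier-restricted expectations. Two points need correcting.

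\textbf{The Rankin bound in Step 2.} It carries a spurious factor $\log P$. Taking exponent $1/\log P$ gives $\sum_{m>q^{\varepsilon},\, p\mid m\Rightarrow P_\theta<p\le P}\frac{1}{m}\le q^{-\varepsilon/\log P}\prod_{P_\theta<p\le P}\bigl(1-p^{1/\log P-1}\bigr)^{-1}\ll \frac{\log P}{\log P_\theta}e^{-\varepsilon\log q/\log P}$. This is exactly the second term you need. The bound you display is $\log P$ times larger, so, contrary to what you assert, it is not compatible with the target.

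\textbf{The claim $\log P_q(\theta)\asymp\log\log P$.} The ballot step saves $\min\{1,\frac{1}{(1-r)\sqrt{\log P_q(\theta)}}\}$, where $\log P_q(\theta)=\log\log P-\log\log P_\theta$ is the number of blocks between $P_\theta$ and $P$. This is comparable to $\min\{1,\frac{1}{(1-r)\sqrt{\log\log P}}\}$ only when $\log\log P\ge(1+c)\log\log P_\theta$ for some fixed $c>0$. That fails in part of the stated range $P_\theta<P\le\sqrt q$, and there the lemma as stated is false.

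Take $\theta<0$ and $P=P_\theta^2$. Then $\log P_q(\theta)=\log 2$, so the range of $j$ in the barrier is empty and the event is vacuous. The left side is then $\gg \frac{\log q}{\log P}\asymp\log^{1+\theta}(q)$. The right side is $o(\log^{1+\theta}(q))$, because $1/\sqrt{\log\log P}\to0$ and $\varepsilon\log q/\log P\ge \log^{1+2\theta}(q)/(2(\log\log q)^2)\to\infty$.

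So rather than appealing to ``admissible $P$'', state the restriction as an explicit hypothesis. It holds for the choice $P=q^{1/(\log\log q)^8}$ used later, where $\log P_q(\theta)\sim(1+\theta)\log\log q$. With these two fixes your argument is sound at the level of detail the paper itself provides.
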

While we will not prove the above, we will outline how the proof works. Since we are looking at a Dirichlet polynomial involving $P$ smooth numbers (with a barrier applied), one can relate these to a genuine truncated Euler product which also has the barrier present. Once we are working with random Euler products, then the barrier will control the growth of the random Euler product in the same way as shown in \cite{aharper201} in Section 5 (in the proof of Key Proposition 1). The only change required in our situation is that we are also restricting to the $P_\theta$ rough integers as well, which saves a factor of $\log(P_\theta)$ in both the standard expectation and the expectation with the barrier event applied. With this in mind, we are ready to prove Proposition \ref{good_moment_bound}.
\begin{proof}[Proof of Proposition \ref{good_moment_bound}]
    This statement becomes much simpler to prove given that we have Lemma \ref{randomisation} to handle the various error terms associated with introducing the partition of unity. We begin with an application of Holder's inequality over the sum of Dirichlet characters, which shows
    \begin{align*}
        &\mathbb{E}^{\text{char}}\big(\int_{|h| \leq \log^\theta(q)/2} \mathbf{1}_{\mathcal{G}_\chi}|\sum_{\substack{m \leq q^\varepsilon \\ p|m \Rightarrow P_\theta < p \leq P}}\frac{\chi(m)}{m^{1/2+ih}}|^2|\sum_{\substack{n \leq q^{1/2-2\varepsilon} \\ P^{-}(n) > P}}\frac{\chi(n)}{n^{1/2+ih}}|^2dh\big)^r \\
        \ll&\big(\int_{|h| \leq \log^\theta(q)/2} \mathbb{E}^{\text{char}}\big(\mathbf{1}_{\mathcal{G}_\chi}|\sum_{\substack{m \leq q^\varepsilon \\ p|m \Rightarrow P_\theta < p \leq P}}\frac{\chi(m)}{m^{1/2+ih}}|^2|\sum_{\substack{n \leq q^{1/2-2\varepsilon} \\ P^{-}(n) > P}}\frac{\chi(n)}{n^{1/2+ih}}|^2\big)dh\big)^r,
    \end{align*}
    so our goal will be to show
    \begin{align*}
        \mathbb{E}^{\text{char}}&\big(\mathbf{1}_{\mathcal{G}_\chi}|\sum_{\substack{m \leq q^\varepsilon \\ p|m \Rightarrow P_\theta < p \leq P}}\frac{\chi(m)}{m^{1/2+ih}}|^2|\sum_{\substack{n \leq q^{1/2-2\varepsilon} \\ P^{-}(n) > P}}\frac{\chi(n)}{n^{1/2+ih}}|^2dh\big) \\ \ll& \log^{1+\theta}(q)\min\big\{1,\frac{1}{(1-r)\sqrt{\log\log(P)}}\big\}.
    \end{align*}
    Like in the proof of Proposition \ref{chaining}, it is very convenient to bound $\mathbf{1}_{\mathcal{G}_\chi}(h)$ by $\mathbf{1}_{\mathcal{G}_\chi(h)}(h)$, where $\mathcal{G}_\chi(h)$ is the event such that for all $0 \leq j \leq \log(P_q(\theta)) - B -1$, we have
$$\big(\frac{P_q(\theta)}{e^j}e^{g(j)}\big)^{-1} \leq \prod_{\ell=j}^{\lfloor\log(P_q(\theta))\rfloor-B-1}I_{\ell,\chi}(h(\ell))\leq \frac{P_q(\theta)}{e^j}e^{g(j)}.$$
To deduce the parameters we will need to apply Lemma \ref{partition_result}, we recall that
$$\log\big(\prod_{\ell=j}^{\lfloor\log(P_q(\theta))\rfloor - B -1}|I_{\ell,\chi}(h)|\big) = \Re\big(\sum_{\ell=j}^{\lfloor\log(P_q(\theta)\rfloor - B -1}\sum_{\substack{P^{e^{-(\ell+1)}} < p \leq P^{e^{-\ell}}}}\sum_{v \geq 1}\frac{(\chi(p))^v}{vp^{v(1/2+ih(\ell))}}\big).$$
The terms corresponding to $v \geq 3$ are convergent and can be bounded by $8/3$. This then gives us our choice of $N_j$, which is
$$N_j = \log(P_\theta(q))-j+g(j)+8/3.$$
We will choose $\delta>0$ later. Then we can bound $\mathbf{1}_{\mathcal{G}_\chi(h)}(h)$ by
$$\prod_{j=0}^{\lfloor\log(P_q(\theta))\rfloor-B-1}\sum_{|i| \leq N_j}g_{j(i)}\big(\Re\big(\sum_{\ell=j}^{\lfloor\log(P_q(\theta)\rfloor - B -1}\sum_{\substack{P^{e^{-(\ell+1)}} < p \leq P^{e^{-\ell}} \\ v=1,2}}\frac{(\chi(p))^v}{vp^{v(1/2+ih(\ell))}}\big)\big).$$
Here we denote $g_{j(i)}$ as the partition of unity for the index $j$ on the interval $[i-N_j,i-N_j+1]$. Then assuming that we can apply Lemma \ref{randomisation}, we have
\begin{align*}
    &\sum_{|i| \leq N_j}\prod_{j=0}^{\lfloor\log(P_q(\theta))\rfloor-B-1}g_{j(i)}\big(\Re\big(\sum_{\ell=j}^{\lfloor\log(P_q(\theta)\rfloor - B -1}\sum_{\substack{P^{e^{-(\ell+1)}} < p \leq P^{e^{-\ell}} \\ v=1,2}}\frac{(\chi(p))^v}{vp^{v(1/2+ih(\ell))}}\big)\big) \\
    =& \sum_{|i| \leq N_j}\prod_{j=0}^{\lfloor\log(P_q(\theta))\rfloor-B-1}g_{j(i)}\big(\Re\big(\sum_{\ell=j}^{\lfloor\log(P_q(\theta)\rfloor - B -1}\sum_{\substack{P^{e^{-(\ell+1)}} < p \leq P^{e^{-\ell}} \\ v=1,2}}\frac{(f(p))^v}{vp^{v(1/2+ih(\ell))}}\big)\big)\\+&O(\frac{N_j\log(x)}{(\log(P_q(\theta))\log(P))^{\log(P_q(\theta))/\delta^2}}).
\end{align*}
This error term is more than acceptable for our theorem with our eventual choice of $\delta$ as $N_j$ is $\asymp \log\log(P)$, and we will be taking $\log(P)$ to the power of something larger than $\log\log(P)$, so this is sufficient. Then we can use Property $(b)$ of the $g$, so we can reverse the smoothing, and we find the above is
\begin{align*}
    &\mathbb{E}^{\text{char}}\big(\mathbf{1}_{\mathcal{G}_\chi}|\sum_{\substack{k \leq q^\varepsilon \\ p|k \Rightarrow P_\theta < p \leq P}}\frac{\chi(k))}{k^{1/2+ih}}|^2|\sum_{\substack{n \leq q^{1/2-2\varepsilon} \\ P^{-}(n) > P}}\frac{\chi(n)}{n^{1/2+ih}}|^2\big)dh \numberthis \label{randomed}\\ \ll& \mathbb{E}\big((1+\delta)^{\log(P_q(\theta))}(\mathbf{1}_{\mathcal{G}_{\text{rand}}(h)}(h)+\delta)|\sum_{\substack{k \leq q^\varepsilon \\ p|k \Rightarrow P_\theta < p \leq P}}\frac{f(k)}{k^{1/2+ih}}|^2|\sum_{\substack{n \leq q^{1/2-2\varepsilon} \\ P^{-}(n) > P}}\frac{f(n)}{n^{1/2+ih}}|^2\big).
\end{align*}
Choosing $\delta = \frac{1}{\log\log(P)}$ for example then shows that
$$(1+\delta)^{\log(P_q(\theta))}(\mathbf{1}_{\mathcal{G}_{\text{rand}}(h)}(h)+\delta) \ll \mathbf{1}_{\mathcal{G}_{\text{rand}}(h)}(h)+\delta.$$
An application of Lemma \ref{log_cor_barrier_moment} and standard random Euler product estimates then show that (\ref{randomed}) is
$$\ll \log^{1+\theta}(q)\big(\min\big\{1,\frac{1}{(1-r)\sqrt{\log\log(P)}}\big\}+\frac{1}{\log\log(P)}\big).$$
All that there is now is to check whether our choice of parameters were suitable for applying Lemma \ref{randomisation}. Indeed, we have that $$xP^{400(Y/\delta)^2\log(N\log(P))} \ll q^{1/2-\varepsilon}P^{1200(\log\log(P))^7} \ll q^{1/2-\varepsilon}q^{1/\log\log(q)} = o(q^{3/4})$$ for our choice of $P = q^{1/(\log\log(q))^8}$ which is again sufficient for application of Lemma \ref{randomisation}.
\end{proof}
\section{Proof of Theorem \ref{short_range_max}} \label{typical_max_section}
With the help of the two following propositions, we prove Theorem \ref{short_range_max}. To state these propositions, we need to define a suitable barrier event on the log correlated part of our Dirichlet polynomial. This will be very similar to the event $\mathcal{G}_\chi$ defined in Section \ref{pfm_proof}. To do this, we introduce the following set of discrete points which we use to approximate $h$ at various levels of precision. Let $\tilde{h}(-1) =h$ and for $0 \leq j \leq \log\log(P)-1$, define
$$\tilde{h}(j) := \max\big\{u \leq \tilde{h}(j-1):u=\frac{n}{(\log(P)/e^j)(\log\log(P))^3} \text{ for some } n \in \mathbb{Z}\big\}.$$
We also define $\tilde{h}(*)$ as the point $\frac{n}{\log(q)}$ which is closest to $h$. We set $I_{\ell,\chi}(h)$ and $I_{\ell,\text{rand}}(h)$ as they are defined in equations (\ref{char_increment}) and (\ref{random_increment}) respectively. Then, we let $\widetilde{\mathcal{G}}_\chi$ denote the event that for all $0 \leq j \leq \log\log(P)-B-1$
$$\big|\sum_{\ell =j}^{\lfloor\log\log(P)\rfloor-B-1}\log(I_{\ell,\chi}(\tilde{h}(\ell)))\big| \leq \log\log(P)-j+3\log\log\log(P) +U$$
for all $|h| \leq 1/2$ with $B$ some large constant and $U\geq 0$ a parameter. We let $\widetilde{\mathcal{G}}_\chi(h)$ denote the event that this occurs for a particular $h$ (rather than all of them). The minor change to look $I_{\ell,\chi}(h)$ rather than $|I_{\ell,\chi}(h)|$ will simplify our proof for proving a suitable bound on the number of $\chi$ such that $\widetilde{\mathcal{G}}_\chi$ fails. Then our proof of Theorem \ref{short_range_max} can be obtained with the two propositions below.
\begin{proposition}
    Uniformly for all $P \leq q^{1/(\log\log(q))^2}$ and $0 \leq U \leq 2\log\log(P)$,
    $$\mathbb{P}_q\big(\widetilde{\mathcal{G}}_\chi \text{ fails}\big) \ll e^{-2U}.$$ \label{max_event_fail}
\end{proposition}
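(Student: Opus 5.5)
A union bound over scales and discretised points, followed by high-moment Markov estimates from Lemma \ref{even_moment}, as in the proof of Proposition \ref{bad_chaining_moment}.

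\textbf{Reduction.} Write $L=\lfloor\log\log(P)\rfloor-B-1$. Taylor expanding each $\log I_{\ell,\chi}$ and discarding the terms $v\geq 3$, which contribute $O(1)$ in total, shows that if $\widetilde{\mathcal{G}}_\chi$ fails then for some $0\leq j\leq L$ and some admissible $\tilde h(j)$ we have
$$\Big|\sum_{\ell=j}^{L}\sum_{P^{e^{-(\ell+1)}}<p\leq P^{e^{-\ell}}}\Big(\frac{\chi(p)}{p^{1/2+i\tilde h(\ell)}}+\frac{\chi(p)^2}{2p^{1+2i\tilde h(\ell)}}\Big)\Big|>\log\log(P)-j+3\log\log\log(P)+U-O(1).$$
Once $\tilde h(j)$ is fixed, all $\tilde h(\ell)$ with $\ell\geq j$ are determined. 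For $|h|\leq 1/2$ there are $\ll (\log(P)/e^j)(\log\log(P))^3$ choices of $\tilde h(j)$. The complex-valued sum is a Dirichlet polynomial with prime and prime-square coefficients, so I will not use any real-part structure.

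\textbf{Markov step.} Let $D=\log\log(P)-j$ and $T=D+3\log\log\log(P)+U$. The variance $\sum_{\ell\geq j}\sum_p 1/p$ over the relevant primes is $D+O(1)$. Apply Markov's inequality with the $2k$-th moment and Lemma \ref{even_moment} (with $x=1$, $c\equiv 1$), where $k\approx T^2/D$. This is admissible because $P^{2k}<q$: indeed $T\ll \log\log(P)$ since $U\leq 2\log\log(P)$, so $k\ll\log\log(P)$, and $P\leq q^{1/(\log\log(q))^2}$. Lemma \ref{even_moment} bounds the moment by $k!(D+O(1))^k$, and Stirling then gives a probability bound
$$\ll \sqrt{k}\,e^{-T^2/D}\leq \sqrt{k}\,e^{-D-2(3\log\log\log(P)+U)}=\sqrt{k}\,\frac{e^j}{\log(P)}(\log\log(P))^{-6}e^{-2U}.$$
Here I expanded $T^2/D\geq D+2(T-D)$. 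The $O(1)$ shifts from the discarded terms only change constants.

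\textbf{Union bound and the main obstacle.} Multiplying by the number of points, the contribution of scale $j$ is $\ll \sqrt{k}\,(\log\log(P))^{-3}e^{-2U}$. The step I expect to be the main obstacle is the sum over $j$. Since $\sqrt{k}\ll(\log\log(P))^{1/2}$, the naive sum over the $\ll \log\log(P)$ values of $j$ is $\ll (\log\log(P))^{-3/2}e^{-2U}$, which is already $\ll e^{-2U}$. Even so, I would keep the extra Gaussian factor $e^{-(T-D)^2/D}$. When $T-D\gtrsim D$, i.e.\ $j$ close to $\log\log(P)$ (a condition that depends on $j$ only through $D$), it makes the scale-$j$ contribution decay in $D$, which keeps the $j$-sum convergent uniformly in $U$. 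The bookkeeping mirrors the two cases in the proof of Proposition \ref{bad_chaining_moment}. It also needs care with integer rounding of $k$: if $T^2/D<1$ we take the trivial bound, but this cannot occur since $T\geq D\geq B$. With these cases handled, summing over $j$ gives $\mathbb{P}_q(\widetilde{\mathcal{G}}_\chi\text{ fails})\ll e^{-2U}$.
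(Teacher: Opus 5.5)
Your proposal is correct and is essentially the paper's proof. It uses a union bound over $j$ and the $\ll(\log(P)/e^j)(\log\log(P))^3$ grid points $\tilde h(j)$. It then applies Markov's inequality with the $2k$-th moment at $k\approx T^2/D$, using the sharp bound $k!(D+O(1))^k$; the paper's computation likewise uses variance $D+O(1)$, not the $2D$ that the factor $2v_r$ in Lemma \ref{even_moment} would literally give, which would be too weak against $e^{D}$ points. Finally, $T^2/D\geq D+2(T-D)$ produces $e^{j}(\log P)^{-1}(\log\log P)^{-6}e^{-2U}$ per point.

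The only slip is your claim $k\ll\log\log(P)$. When $j$ is close to $\log\log(P)-B$ and $U\asymp\log\log(P)$, one has $D\asymp B$ and hence $k\asymp(\log\log(P))^2$. This is harmless for the sum over $j$: with $\sqrt{k}\ll\log\log(P)$, as in the paper, the total is still $\ll e^{-2U}/\log\log(P)$. However, the condition of Lemma \ref{even_moment} should then be checked using that only primes $\leq P^{e^{-j}}$ occur at scale $j$ (or by taking $B$ large), not via $k\ll\log\log(P)$.
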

\begin{proposition}
    Uniformly for all $P \leq q^{1/(\log\log(q))^8}, \ \frac{20\log(P)\log\log(P)}{\log(q)}< \varepsilon < 1/10, \ 0 \leq U \leq 2\log\log(P)$ and $V \geq e^{-U}$; and for all $|h| \leq 1/2$ and $|\sigma-1/2| \leq 1/\log(q)$, we have
    \begin{align*}
        &\mathbb{E}^{\text{char}}\big(\mathbf{1}_{\widetilde{\mathcal{G}}_\chi(h)}(h)\mathbf{1}_{\mathcal{H}_\chi(h;V) \text{ fails}}(h)\big|\sum_{\substack{m \leq q^\varepsilon \\ P^+(m) \leq P}}\frac{\chi(m)}{m^{1/2+ih}}\sum_{\substack{n \leq q^{1/2-2\varepsilon} \\ P^-(n) > P}}\frac{\chi(n)}{n^{\sigma+ih}}\big|^2\big) \\
        \ll& \log(q)\min\big\{1,\frac{U+\log\log\log(P)}{\sqrt{\log\log(P)}}\big\}\big(\min\big\{1,\frac{U+\log\log\log(P)+\log(V)}{\sqrt{\log\log(P)}}\big\}\big)^2. 
    \end{align*} \label{conditional_bound_max}
\end{proposition}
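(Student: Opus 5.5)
The proof of Proposition \ref{conditional_bound_max} follows the same pattern as the proof of Proposition \ref{good_moment_bound}: transfer to the random side, then apply the ballot-type bound with an endpoint constraint (the second probabilistic lemma of Section \ref{prob_heur}) in place of Lemma \ref{ballot}. The plan has three steps.

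\textbf{Step 1: smooth the indicators and randomise.} First I make both indicators depend only on prime sums.
\begin{itemize}
\item The indicator $\mathbf{1}_{\widetilde{\mathcal{G}}_\chi(h)}$ is already a condition on the partial sums $\sum_{\ell \geq j}\log(I_{\ell,\chi}(\tilde h(\ell)))$. After Taylor expansion, as in the proof of Proposition \ref{bad_chaining_moment}, these become prime sums with $v=1,2$ plus an $O(1)$ error.
\item The failure of $\mathcal{H}_\chi(h;V)$ says $|\sum_{m \leq q^\varepsilon, P^+(m)\leq P}\chi(m)m^{-1/2-ih}| > \log(P)/V$. This is not a prime sum. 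I bound it by $\mathbf{1}\{\log|F_\chi| \geq \log\log(P) - \log(V) - O(1)\}$ plus the event that the smooth sum differs from the truncated Euler product $F_\chi(1/2+ih)=\prod_{p\leq P}(1-\chi(p)p^{-1/2-ih})^{-1}$ by more than $\log(P)/(2V)$.
\item On the random side, that difference has second moment $\ll \log(P)e^{-\varepsilon\log(q)/\log(P)}$ by the argument of Lemma \ref{exp_logn}. With $\varepsilon > 20\log(P)\log\log(P)/\log(q)$ this is $\ll \log(P)(\log P)^{-20}$, so this exceptional event contributes negligibly after Cauchy--Schwarz against a fourth moment (cf.\ Lemma \ref{conditional_fourth}).
\item I then approximate the product of all indicators by the partition of unity of Lemma \ref{partition_result}, with $Y \asymp \log\log(P)$ and $\delta = 1/\log\log(P)$. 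Lemma \ref{randomisation} applies since $x \leq q^{1/2}$ and $P \leq q^{1/(\log\log(q))^8}$ give $xP^{400(Y/\delta)^2\log(N\log P)} < q$.
\item Undoing the smoothing costs a factor $(1+\delta)^{\log\log P} \ll 1$ plus a $\delta$-error term, which is acceptable.
\end{itemize}

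\textbf{Step 2: factor off the rough part.} On the random side the sum over $P^-(n)>P$ is independent of $(f(p))_{p\leq P}$. Its mean square is $\ll \log(q)/\log(P)$ uniformly in $|\sigma-1/2|\leq 1/\log q$. It then remains to show
$$\mathbb{E}\Big(\mathbf{1}_{\widetilde{\mathcal{G}}_{\text{rand}}(h)}\mathbf{1}\{\log|F(1/2+ih)| \geq \log\log P - \log V - O(1)\}|F(1/2+ih)|^2\Big) \ll \log(P)\,\Phi(U)\Phi(U+\log V)^2,$$
where $\Phi(x)=\min\{1,(x+\log\log\log P)/\sqrt{\log\log P}\}$.

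\textbf{Step 3: tilt and apply the ballot bound.} I write $\log|F| = \sum_{\ell}\log|I_{\ell,\text{rand}}(h)| + O_B(1)$, discarding the smallest primes via the constant $B$ as in Section \ref{prob_heur}, and perform the exponential tilt (Girsanov-type change of measure) by $|F|^2$, as in Harper's proof of Key Proposition 1. Under the tilted measure:
\begin{itemize}
\item the increments $\log|I_\ell|$ have mean $\approx 1$ and variance $\approx 1/2$, and $\mathbb{E}|F|^2 \asymp \log(P)$;
\item the centred walk $S_j = \sum_{\ell \geq j}(\log|I_\ell| - 1)$ must stay below $3\log\log\log P + U$ at every level, by $\widetilde{\mathcal{G}}$;
\item at the final level it must end in $[-\log V - O(1),\, 3\log\log\log P + U]$, by the $\mathcal{H}$-failure condition.
\end{itemize}
The second probabilistic lemma of Section \ref{prob_heur}, with $a = U + 3\log\log\log P$, $b = a + \log V$ and $n \asymp \log\log P$, gives exactly $\Phi(U)\Phi(U+\log V)^2$. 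The assumption $V \geq e^{-U}$ guarantees $b \geq 0$.

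\textbf{Main obstacle.} The hardest part is making the Gaussian comparison rigorous under the tilt with these barrier-plus-endpoint constraints. Two issues arise: the discretised points $\tilde h(\ell)$ differ from $h$, and the lower barrier must be handled as well. I would follow Lemmas 4--5 of \cite{aharper201}. The discretisation mesh $(\log P/e^j)^{-1}(\log\log P)^{-3}$ makes $|\log I_\ell(\tilde h(\ell)) - \log I_\ell(h)|$ small on average, and the extra $3\log\log\log P$ in the barrier absorbs these $O(1)$-per-scale perturbations within the $|h(j)|<10\log j$ slack allowed by the ballot lemma.
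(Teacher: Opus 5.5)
Your architecture matches the paper's: make both indicators prime-sum conditions, randomise with Lemma \ref{randomisation}, factor off the rough part, then tilt by $|F|^2$ and apply the endpoint ballot bound with $a=U+3\log\log\log P$, $b=a+\log V$. Step 3 is essentially an outline of the imported Lemma \ref{log_cor_max_moment}. The gap is in Step 1, where you convert the failure of $\mathcal{H}_\chi(h;V)$ into a prime-sum event. Write $S_\chi$ and $R_\chi$ for the smooth and rough sums. You split using the full Euler product $F_\chi(1/2+ih)=\prod_{p\le P}(1-\chi(p)p^{-1/2-ih})^{-1}$ on the character side, and dispose of $E=\{|S_\chi-F_\chi|>\log P/(2V)\}$ with a Rankin bound that is valid only for Steinhaus $f$. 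But $E$ cannot be carried to the random side. Lemma \ref{randomisation} only transfers smoothed functions of real parts of short prime sums against $|\sum_{n\le x}c(n)\chi(n)n^{-1/2}|^2$ with $x$ small, and $\mathbf{1}_E$ is not of that form. Nor can $\mathbb{P}_q(E)$ be bounded on the character side. $F_\chi$ expands over all $P$-smooth integers, far beyond $q$, so orthogonality says nothing about $\mathbb{E}^{\text{char}}|S_\chi-F_\chi|^2$. Indeed, at $h=0$ the principal character alone contributes $\frac{1}{q-1}\prod_{p\le P}(1-p^{-1/2})^{-2}=\frac{1}{q-1}\exp\big((4+o(1))\sqrt P/\log P\big)$, which is astronomically large for $P$ as large as $q^{1/(\log\log q)^8}$. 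So your Cauchy--Schwarz/Markov step has no input.

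The missing ingredient is the truncated exponential of Lemma \ref{partial_exp},
\[T_\chi=\sum_{v\le \varepsilon\log q/\log P}\frac{1}{v!}\Big(\sum_{p^j:\,p\le P}\frac{\chi(p)^j}{jp^{j(1/2+i\tilde h(\ell(p)))}}\Big)^v .\]
This is a short Dirichlet polynomial, so $\mathbb{E}^{\text{char}}|S_\chi-T_\chi|^2\ll\log P/(\log\log P)^2$ by orthogonality. The barrier is then used a second time. On $\widetilde{\mathcal{G}}_\chi(h)$ the inner prime sum has modulus at most $4\log\log P$. Since $\varepsilon\log q/\log P>20\log\log P$, the Stirling tail of the exponential series is $O(1)$, so $T_\chi=e^{O(1)}\prod_\ell I_{\ell,\chi}(\tilde h(\ell))+O(1)$. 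This, not Rankin's trick, is what the hypothesis on $\varepsilon$ is for. Hence on $\widetilde{\mathcal{G}}_\chi(h)$, failure of $\mathcal{H}_\chi(h;V)$ leaves two cases:
\begin{itemize}
\item $\prod_\ell|I_{\ell,\chi}(\tilde h(\ell))|\ge\log P/(B_0V)$, a genuine prime-sum condition that can be smoothed and randomised;
\item $|S_\chi|\le 2|S_\chi-T_\chi|$, whose contribution $\mathbb{E}^{\text{char}}|S_\chi-T_\chi|^2|R_\chi|^2\ll\log q/(\log\log P)^2$ is acceptable.
\end{itemize}
The second case needs $V\le e^{\sqrt{\log\log P}}$; for larger $V$ one simply drops the $\mathcal{H}$-indicator. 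A side benefit is that the size condition then sits at the same discretised points $\tilde h(\ell)$ as the barrier, which removes part of what you list as the main obstacle.

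Separately, your choice $\delta=1/\log\log P$ is too large for this proposition. The leakage after unsmoothing is $\asymp\delta\log q=\log q/\log\log P$, whereas for $U=\log V=0$ the target is $\log q(\log\log\log P)^3/(\log\log P)^{3/2}$. Take $\delta=(\log\log P)^{-2}$, as the paper does. Lemma \ref{randomisation} still applies, since then $P^{400(Y/\delta)^2\log(N\log P)}\le q^{O(1/\log\log q)}$.
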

Note this proof is exactly identical to the work of Harper in \cite{harper2019partition}, only at points we choose to condense heavy technical theorems to outlines.
\begin{proof}[Proof of Theorem \ref{short_range_max} assuming Propositions \ref{max_event_fail} and \ref{conditional_bound_max}.]
    We set $P= q^{1/(\log\log(q))^8}$ and $\varepsilon = \frac{1}{(\log\log(q))^2}$. We also set the parameter $V=e^{-U}(\log\log(q))^6$. All of these are suitable for the application of our various propositions. \par
    For each $\chi$ modulo $q$, we define $h_\chi$ as the value of $h$ such that $\max_{|h| \leq1/2}|L(1/2+ih,\chi)|$ is obtained. From the statement of Theorem \ref{short_range_max}, we will need to prove
    \begin{equation}
        \mathbb{P}_q\big(|L(1/2+ih_\chi,\chi)|>\frac{e^{U}\log(q)}{(\log\log(q))^{3/4}}\big) \ll e^{-2U}(U+\log\log\log(q))(\log\log\log(q))^2. \label{target_max}
    \end{equation}
    The size of this set is larger than the set of $\chi$ where $\widetilde{\mathcal{G}}_\chi$ fails, so we may assume that the event $\widetilde{\mathcal{G}}_\chi$ holds in our calculations. This will be particularly helpful when the log correlated contribution to $L(1/2+ih,\chi)$ is large. \par
    Now, if $|L(1/2+ih_\chi),\chi)| \geq \frac{e^{U}\log(q)}{(\log\log(q))^{3/4}}$, then by the approximate functional equation of Dirichlet $L$-functions, we know
    $$\sum_{n \leq \sqrt{q/2\pi}}\frac{\chi(n)}{n^{1/2+ih_\chi}} \gg \frac{e^{U}\log(q)}{(\log\log(q))^{3/4}},$$
    given the error in this approximation is $O(q^{-1/4})$ (which is acceptable given the bound in equation (\ref{target_max})). This Dirichlet polynomial is a holomorphic function, so we can apply the Cauchy integral formula to rewrite this. We then see
    $$\sum_{n \leq \sqrt{q/2\pi}}\frac{\chi(n)}{n^{1/2+ih_\chi}} = \frac{1}{2\pi i}\big(\int_{A_\chi}+\int_{B_\chi}+\int_{C_\chi}+\int_{D_\chi}\big)\sum_{n \leq \sqrt{q/2\pi}}\frac{\chi(n)}{n^{s}}\frac{ds}{(s-1/2-ih_\chi)},$$
    where the union of the lines $A_\chi, B_\chi, C_\chi$ and $D_\chi$ form a square with corners $1/2+ \tilde{h_\chi}(*) \pm\frac{1 \pm i}{\log(q)}$. From the way $\tilde{h}(*)$ is defined, we have $|h_\chi-\tilde{h_\chi}(*)| \leq 1/\log(q)$, and further we can see that within the square defined by $A_\chi, B_\chi, C_\chi, D_\chi$, we have $|s-1/2-ih_\chi| \ll 1/\log(q)$. In light of this, we have
    $$\max_{\mathcal{C} \in \{A_\chi,B_\chi,C_\chi,D_\chi\}}\int_{\mathcal{C}}\sum_{n \leq \sqrt{q/2\pi}}\big|\frac{\chi(n)}{n^{s}}\big|d|s|  \gg \frac{e^{U}}{(\log\log(q))^{3/4}}.$$
    We investigate the vertical lines first. These can also be handled in an identical fashion, so here $\sigma$ can denote $1/2+1/\log(q)$ or $1/2-\log(q)$. After several applications of the triangle inequality, we see after a change of variables $s \mapsto s+1/2$
    \begin{align*}
        &\int_{|\tilde{h_\chi}(*)-h| \leq 1/\log(q)}\sum_{n \leq \sqrt{q/2\pi}}\big|\frac{\chi(n)}{n^{\sigma+ih}}\big|dh  \numberthis \label{core_triangle}\\ 
        \ll& \int_{|\tilde{h_\chi}(*)-h| \leq 1/\log(q)}\big|\sum_{n \leq \sqrt{q/2\pi}}\frac{\chi(n)}{n^{\sigma+ih}}-\sum_{\substack{m \leq q^{\varepsilon} \\ P^+(m) \leq P}}\frac{\chi(m)}{m^{1/2+ih}}\sum_{\substack{n \leq q^{1/2-2\varepsilon} \\ P^-(n) > P}}\frac{\chi(n)}{n^{\sigma+ih}}\big|dh \\
        +& \int_{|\tilde{h_\chi}(*)-h| \leq 1/\log(q)}|\sum_{\substack{m \leq q^{\varepsilon} \\  P^+(m) \leq P}}\frac{\chi(k)}{k^{1/2+ih}}\sum_{\substack{n \leq q^{1/2-2\varepsilon} \\ P^{-}(n) > P}}\frac{\chi(n)}{n^{\sigma+ih}}|dh.
    \end{align*}
    For the first expression, if it is $\gg \frac{e^{U}}{(\log\log(q))^{3/4}}$, then by applying Cauchy-Schwarz, we must have
    $$\frac{e^{2U}}{(\log\log(q))^{3/2}} \ll \frac{1}{\log(q)}\int_{|\tilde{h_\chi}(*)-h| \leq 1/\log(q)}\big|\sum_{n \leq \sqrt{q/2\pi}}\frac{\chi(n)}{n^{\sigma+ih}}-\sum_{\substack{m \leq q^{\varepsilon} \\ P^+(m) \leq P}}\frac{\chi(m)}{m^{1/2+ih}}\sum_{\substack{n \leq q^{1/2-2\varepsilon} \\ P^-(n) > P}}\frac{\chi(n)}{n^{\sigma+ih}}\big|^2dh.$$
    We then can extend the range of integration to $|h| \leq 1/2$ (so we now have no character dependency), and sum over all Dirichlet characters modulo $q$. The number of $\chi$ where this occurs is then
    $$\ll \frac{qe^{-2U}}{\sqrt{\log\log(q)}},$$
    which is certainly good enough. \par
    The final term in equation (\ref{core_triangle}) requires a more delicate procedure. By Holder's inequality, if the final integral is $\gg \frac{e^{U}}{(\log\log(q))^{3/4}}$, then using the exponents $r=4/3$ and $r'=4$, then we have either
    \begin{align*}
        \frac{e^{4U}}{(\log\log(q))^3} \ll  \frac{1}{\log^3(q)}\int_{|\tilde{h_\chi}(*)-h| \leq 1/\log(q)}\mathbf{1}_{\mathcal{H}_\chi(h;V)}(h)\big|\sum_{\substack{m \leq q^\varepsilon \\ P^+(m) \leq P}}\frac{\chi(k)}{k^{1/2+ih}}\sum_{\substack{n \leq q^{1/2-2\varepsilon} \\ P^{-}(n) > P}}\frac{\chi(n)}{n^{\sigma+ih}}\big|^4dh \numberthis \label{small_dirichlet}
    \end{align*}
    or
    \begin{align*}
        &\frac{e^{2U}}{(\log\log(q))^{3/2}} \ll \int_{|\tilde{h_\chi}(*)-h| \leq 1/\log(q)}\mathbf{1}_{\mathcal{H}_\chi(h;V) \text{ fails}}(h)\big|\sum_{\substack{m \leq q^\varepsilon \\ P^+(m) \leq P}}\frac{\chi(m)}{m^{1/2+ih}}\sum_{\substack{n \leq q^{1/2-2\varepsilon} \\ P^{-}(n) > P}}\frac{\chi(n)}{n^{\sigma+ih}}\big|^2dh. \numberthis \label{big_dirichlet}
    \end{align*}
    We begin with equation (\ref{small_dirichlet}). Like before, in the second equation we extend the range of integration in $h$ to make the quantity independent of $\chi$, and sum over the Dirichlet characters modulo $q$. The integrand in equation (\ref{small_dirichlet}) is then handled by Lemma \ref{conditional_fourth}. From this, we deduce that the number of characters modulo $q$ that satisfy equation (\ref{small_dirichlet}) are then
    $$\ll \frac{qe^{-4U}(\log\log(q))^3}{V^2} \ll \frac{qe^{-2U}}{\log\log(q)}.$$
    Finally, we handle equation (\ref{big_dirichlet}). This is where we can use the event $\widetilde{\mathcal{G}}_\chi(h)$. We again extend the range of integration in $h$, we have
    $$\frac{e^{2U}}{(\log\log(q))^{3/2}} \ll \frac{1}{\log(q)}\int_{|h| \leq 1/2}\mathbf{1}_{\mathcal{H}_\chi(h;V) \text{ fails}}(h)\big|\sum_{\substack{m \leq q^\varepsilon \\ P^+(m) \leq P}}\frac{\chi(m)}{m^{1/2+ih}}\sum_{\substack{n \leq q^{1/2-2\varepsilon} \\ P^{-}(n) > P}}\frac{\chi(n)}{n^{\sigma+ih}}\big|^2dh.$$
    Then the number of $\chi$ such that the above display holds, and that $\widetilde{\mathcal{G}}_\chi$ satisfies
    $$\frac{(\log\log(q))^{3/2}e^{-2U}}{\log(q)}\int_{|h| \leq 1/2}\mathbb{E}^{\text{char}}\big(\mathbf{1}_{\widetilde{\mathcal{G}}_\chi}(h)\mathbf{1}_{\mathcal{H}_\chi(h;V) \text{ fails}}(h)\big|\sum_{\substack{m \leq q^\varepsilon \\ P^+(m) \leq P}}\frac{\chi(m)}{m^{1/2+ih}}\sum_{\substack{n \leq q^{1/2-2\varepsilon} \\ P^{-}(n) > P}}\frac{\chi(n)}{n^{\sigma+ih}}\big|^2\big)dh.$$
    From this, we use $\mathbf{1}_{\widetilde{\mathcal{G}}_\chi}(h) \leq \mathbf{1}_{\widetilde{\mathcal{G}}_\chi(h)}(h)$, so the above is
    \begin{align*}
        \ll& q(\log\log(q))^{3/2}e^{-2U}\min\big\{1,\frac{U+\log\log\log(P)}{\sqrt{\log\log(P)}}\big\}\big(\min\big\{1,\frac{U+\log\log\log(P)+\log(V)}{\sqrt{\log\log(P)}}\big\}\big)^2 \\
        \ll& qe^{-2U}(U+\log\log\log(q))(\log\log\log(q))^2.
    \end{align*}
    Now, it only remains to handle the contribution of the horizontal sides. These will be of the form
    $$\int_{|\sigma-1/2| \leq1/\log(q)}\big|\sum_{n \leq \sqrt{\frac{q-|a|/\log(q)}{2\pi}}}\frac{\chi(n)}{n^{\sigma+i(\tilde{h_\chi}(*)+a/\log(q))}}\big|d\sigma$$
    where $a\in\{-1,+1\}$. Now, we can use the triangle inequality to obtain an upper bound for the above in the same style as equation \ref{core_triangle}. What is quite helpful is that the shift in $\sigma$ only affects the Dirichlet polynomial supported on rough numbers (and the shift in $\sigma$ is so small it makes very little difference anyway). What does need more work is how we make each expression independent of $\chi$. This is specifically why we introduced the new approximating points $\tilde{h_\chi}(*)$ as these are all of the form $n/\log(q)$. We can then sum over all of the possible positions of $\tilde{h_\chi}(*)$ over the interval $[-1/2,1/2]$, of which there are $\asymp \log(q)$ of these points. One then can proceed in an identical manner to the way we do for the vertical integrals, and obtain the same bounds (in fact it is slightly easier because we are only handling a shift in the real part and not in the imaginary part of the argument). This is why all of our preliminary lemmas and propositions only involved averaging over the characters and uniformly in $h$ so they can be applied in this situation.
\end{proof}
\subsection{Proof of Proposition \ref{max_event_fail} and \ref{conditional_bound_max}}
Like in the case of our previous theorems, it is much easier to prove Proposition \ref{max_event_fail} than Proposition \ref{conditional_bound_max}, so we start there.
\begin{proof}[Proof of Proposition \ref{max_event_fail}]
    Most of the proof has already been completed in Proposition \ref{bad_chaining_moment}. Notice that this time, we are interested in bounding absolute values of the sum of $\log(I_{\ell,\chi}(\tilde{h}(\ell)))$, and not the sums of the real part of $\log(I_{\ell,\chi}(\tilde{h}(\ell)))$. This is not a problem as we upper bounded the real part by their absolute values in the proof of Proposition \ref{bad_chaining_moment}.
    We want to find the probability such that
    \begin{equation}
        \big|\sum_{\ell=j}^{\lfloor\log\log(P)\rfloor - B -1}\sum_{\substack{P^{e^{-(\ell+1)}} < p \leq P^{e^{-\ell}} \\ v=1,2}}\frac{\chi(p)}{p^{1/2+ih(\ell)}}+\frac{(\chi(p))^2}{p^{1+2ih(\ell)}}\big| > \log\log(P) -j + 3\log\log\log(P)+U. \label{max_excep_set}
    \end{equation}
    An application of Markov's inequality to the power
    $$r=\big\lfloor\frac{(\log\log(P)-j+3\log\log\log(P)+U+O(1))^2}{\log\log(P)-j+O(1)}\big\rfloor$$
    shows that the probability that \ref{max_excep_set} occurs for is
    $$\ll \sqrt{r}\big(\frac{r(\log\log(P)-j+O(1))}{e(\log\log(P)-j+3\log\log\log(P)+U+O(1))^2}\big)^r.$$
    As $U \leq 2\log\log(P)$, then we certainly know that $r \leq (3\log\log(P))^2$, so this is bounded by
    $$\ll q\frac{\log\log(P)e^j}{\log(P)(\log\log(P))^6}e^{-2U} \ll q\frac{e^j}{\log(P)(\log\log(P))^{5}}e^{-2U}.$$
    Substituting back into the initial union bound we take over the $\tilde{h}(j)$ (recall there are $\log(P)e^{-j}(\log\log(P))^3$ such points) and over $j$, we then see the final bound will be $\ll e^{-2U}$ as required.
\end{proof}
We need some preparation in order to prove Proposition \ref{conditional_bound_max}. This is considerably more technically demanding than Proposition \ref{good_moment_bound} since we are looking for such large values which are rare. Due to the more stringent barrier, we need something that resembles an Euler product more closely to work with to prove our result.
\begin{lemma}
    Suppose $P \leq q^{\frac{1}{100\log\log(q)}}$, with $\frac{e^2\log(P)\log\log(P)}{\log(q)}<\varepsilon \leq \frac{1}{(\log\log(P))^2}$. For each prime $p$, define $\ell(p) \in \mathbb{N} \cup \{0\}$ for which $P^{e^{-(\ell+1)}} < p \leq P^{e^{-\ell}}$. Then uniformly for all $|h| \leq 1/2$ and $|\sigma-1/2| \leq 1/\log(q)$
    \begin{align*}
        \mathbb{E}^{\text{char}}&\bigg(\big|\sum_{\substack{m \leq q^{\varepsilon} \\ P^+(m) \leq P}}\frac{\chi(m)}{m^{1/2+ih}}-\sum_{v=0}^{\lfloor\frac{\varepsilon\log(q)}{\log(P)}\rfloor}\big(\sum_{p^j:p \leq P}\frac{\chi(p)^j}{jp^{j(1/2+i\tilde{h}(\ell(p)))}}\big)^v\big|^2\bigg) \ll \frac{\log(P)}{(\log\log(P))^2}.
    \end{align*} \label{partial_exp}
\end{lemma}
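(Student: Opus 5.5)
The plan is to pass through the full (untruncated) Euler product with frozen phases and compare both sides to it, controlling every error in mean square by orthogonality (Lemma \ref{even_moment}, or directly). Throughout I read the $v$-th term as $\frac{1}{v!}\big(\sum_{p^j}\cdots\big)^v$, i.e.\ a truncated exponential series. I also first restrict the inner sum to $p^j \leq P$. The discarded terms with $p \leq P < p^j$ contribute $O(P^{-1/2})$ inside the exponential, and are handled exactly as in Lemma \ref{exp_logn}. This restriction makes every term of the $v$-th power supported on integers $m \leq P^v$.

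\emph{Step 1 (freezing the phases).} Replace the left sum by
$$\sum_{\substack{m \leq q^\varepsilon \\ P^+(m)\leq P}}\frac{\chi(m)}{m^{1/2}}\prod_{p^a\| m}p^{-ia\tilde h(\ell(p))}.$$
For each prime $p$ we have $\log p \leq e^{-\ell(p)}\log P$ and $|h-\tilde h(\ell(p))| \leq e^{\ell(p)}/(\log(P)(\log\log(P))^3)$. Hence the coefficient of $m$ changes by at most
$$\sum_{p^a\|m}a\log(p)\,|h-\tilde h(\ell(p))| \leq \frac{\Omega(m)}{(\log\log(P))^3}.$$
By orthogonality (the length $q^\varepsilon$ is $<q$), the mean square of this error is
$$\ll (\log\log(P))^{-6}\sum_{P^+(m)\leq P}\frac{\Omega(m)^2}{m} \ll \frac{\log(P)(\log\log(P))^2}{(\log\log(P))^6},$$
which is comfortably acceptable. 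The point is that the frozen coefficients are again completely multiplicative, since they are twisted by a phase depending only on $p$.

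\emph{Step 2 (matching with the truncated exponential).} With frozen phases, the formal identity
$$\exp\Big(\sum_{p^j:p\leq P}\frac{\chi(p)^j}{jp^{j(1/2+i\tilde h(\ell(p)))}}\Big)=\sum_{P^+(m)\leq P}\frac{\chi(m)}{m^{1/2}}\prod_{p^a\|m}p^{-ia\tilde h(\ell(p))}$$
holds, with the $v$-th term of the series collecting exactly the contributions indexed by $v$ prime-power factors. So the $v\leq V:=\lfloor \varepsilon\log(q)/\log(P)\rfloor$ truncation and the frozen sum over $m\leq q^\varepsilon$ differ only through two kinds of terms, both supported on short Dirichlet polynomials. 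These are, first, the smooth $m$ that need more than $V$ prime-power factors, and second, the smooth $m>q^\varepsilon$ reached with at most $V$ factors; the latter set is empty, since $P^V\leq q^\varepsilon$. Each $v$-th term is a Dirichlet polynomial of length at most $P^V\leq q^{\varepsilon}<q$, so orthogonality applies to the difference. Its mean square is bounded, up to multinomial coefficients that are at most the coefficients of $m$ in the exponential, by $\sum_{P^+(m)\leq P,\,\Omega(m)>V}d(m)/m$.

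\emph{Step 3 (Rankin's trick, the main obstacle).} Bound the tail by
$$y^{-V}\sum_{P^+(m)\leq P}\frac{y^{\Omega(m)}d(m)}{m}\ll y^{-V}(\log P)^{2y}$$
with a fixed $y$, say $y=2$. Since $V\geq e^2\log\log(P)$ by the hypothesis on $\varepsilon$, this gives
$$(\log P)^{4-e^2\log 2}\ll (\log P)^{-1},$$
which is negligible. The delicate part is bookkeeping the combinatorial weights when expanding $(\sum)^v$ with prime powers $p^j$, $j\geq 2$. I would handle this by treating the $j\geq 2$ terms separately: they are absolutely summable, so they only multiply by a bounded factor, in the same way as the $v\geq3$ terms are treated elsewhere in the paper. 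Combining the three steps with $|A+B+C|^2\ll |A|^2+|B|^2+|C|^2$ gives the claimed $\log(P)/(\log\log(P))^2$.
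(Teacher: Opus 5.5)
Your three-step skeleton is sound. The paper gives no argument here beyond deferring to Harper's Lemma 6, and your route is a natural self-contained replacement for that citation. Step 1 is correct: the phase error is $\ll \Omega(m)/(\log\log P)^3$, and $\sum_{P^+(m)\le P}\Omega(m)^2/m\ll \log P(\log\log P)^2$. Matching a truncated exponential against the frozen, completely multiplicative smooth sum, with a Rankin bound on $\Omega(m)>V$, is also the right mechanism.

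The gap is in the prime-power bookkeeping, and three steps fail as written.

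(i) The terms you discard, with $p\le P<p^j$, are not $O(P^{-1/2})$. Their $j=2$ part $\sum_{\sqrt P<p\le P}\chi(p)^2/(2p^{1+2i\tilde h(\ell(p))})$ has modulus close to $\frac12\log 2$ for the quadratic character at $h=0$; it is small only in mean square. It also sits inside a truncated power series averaged over characters, not inside an exponential. Moreover, the statement's inner sum runs over all $p^j$ with $p\le P$, so its powers are not Dirichlet polynomials of length $<q$. For these reasons the random-model Taylor expansion of Lemma \ref{exp_logn} does not transfer.

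(ii) After restricting to $p^j\le P$, your formal identity is false. Write $a(p)=\chi(p)p^{-1/2-i\tilde h(\ell(p))}$. At $m=p^2$ with $\sqrt P<p\le P$, the restricted exponential produces $\frac12 a(p)^2$ rather than $a(p)^2$. So there is a third class of mismatched $m$: those divisible by some $p^j>P$ with $j\ge 2$.

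(iii) The $j=2$ terms are not absolutely summable, since $\sum_{p\le P}1/(2p)\sim\frac12\log\log P$. Separately, Rankin with $y=2$ diverges at the local factor $p=2$, with or without your $d(m)$.

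All of this is repairable using an observation you half-state. For any set of $v$'s and any set $\mathcal Q$ of prime powers, the coefficient of $\chi(m)$ in $\sum_v\frac{1}{v!}\big(\sum_{p^j\in\mathcal Q}a(p)^j/j\big)^v$ has modulus at most $m^{-1/2}$. The reason is that with $|a(p)|$ in place of $a(p)$ every term is nonnegative, and over all $v$ and all prime powers they total exactly $m^{-1/2}$, since $\exp(\sum_j x^j/j)=(1-x)^{-1}$. This removes the multinomial and $d(m)$ bookkeeping entirely.

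Now write the inner sum as $S_1+S_2$, with $S_2$ the terms $p^j>P^2$. Then $|S_2|\ll 1/\log P$ pointwise. Expand $\sum_{v\le V}(S_1+S_2)^v/v!=\sum_{k\le V}\frac{S_2^k}{k!}\sum_{u\le V-k}\frac{S_1^u}{u!}$. The terms with $k\ge1$ contribute $\ll 1/\log P$ to the mean square by Minkowski and orthogonality, because every polynomial left has length $\le P^{2V}\le q^{2\varepsilon}<q$ and coefficients of modulus $\le m^{-1/2}$.

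The $k=0$ term differs from your frozen smooth sum only on two classes of $m$:
\begin{itemize}
\item $m$ divisible by some $p^j>P$ with $j\ge2$, which contribute $\ll \log P\sum_{p\le P}\sum_{j\ge2,\,p^j>P}p^{-j}\ll P^{-1/2}$;
\item $m$ with $\Omega(m)>V$, which by Rankin with $y=3/2$ contribute $\ll (3/2)^{-V}(\log P)^{3/2}\ll(\log P)^{3/2-e^2\log(3/2)}\le(\log P)^{-1.4}$.
\end{itemize}
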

\begin{proof}
    Again, this goes through without change to the analogous result Lemma 6 in \cite{harper2019partition} (only that we are using a character version for our even moment estimates).
\end{proof}
The barrier event we define on the random side will account for suitable random versions of the events $\widetilde{\mathcal{G}}_{\chi}(h)$ and $\mathcal{H}_\chi(h;V)$. The event $\widetilde{\mathcal{G}}_{\text{rand}}(h)$ will denote when for all $0 \leq j \leq \log\log(P)-B-1$ that we have
$$ \big|\sum_{\ell = j}^{\lfloor\log\log(P)\rfloor-B-1}\log(|I_{\ell,\text{rand}}(\tilde{h}(\ell))|)\big| \leq \log\log(P) -j + 3\log\log\log(P)+U+19/3$$
as well as the stronger lower bound
$$\prod_{j=0}^{\lfloor\log\log(P)\rfloor-B-1}|I_{\ell,\text{rand}}(\tilde{h}(\ell))| \geq \frac{\log(P)}{VB_0e^{19/3}}$$
for some absolute large constant $B_0$ (coming from the probability results in Section \ref{prob_heur}). One way that we might hope to obtain a stronger result here is to allow for the barriers to be curved with $j$ (which is done in \cite{abr20}). The barriers used in their work follow are far tighter than the ones used here as well (relying similar results to Lemma \ref{ballot} which allow for the slowly growing $h(j)$ function). We then need a suitable estimate for how our log-correlated Dirichlet polynomial behaves under this event. Again, this will follow in an identical manner to that of Lemma 7 in \cite{harper2019partition}. However, this is rather tricky, so we will expand upon why this is the case.
\begin{lemma}
    Let $f(n)$ be a Steinhaus multiplicative function. Uniformly for all large $P \leq \sqrt{q}$ and $0 < \varepsilon \leq 1/10$ with $0 \leq U \leq 2\log\log(P)$ and $V \geq e^{-U}$, then for all $|h| \leq 1/2$ and $|\sigma-1/2| \leq 1/\log(q)$,
    \begin{align*}
        &\mathbb{E}\big(\mathbf{1}_{\widetilde{\mathcal{G}}_{\text{rand}}(h)}(h)|\sum_{\substack{m \leq q^\varepsilon \\ P^+(m) \leq P}}\frac{f(m)}{m^{1/2+ih}}|^2|\sum_{\substack{n \leq q^{1/2-2\varepsilon} \\ P^-(n) > P}}\frac{f(n)}{n^{1/2+ih}}|^2\big)\\ \ll& \log(q)\big(\min\big\{1,\frac{U+\log\log\log(P)}{\sqrt{\log\log(P)}}\big\}\big(\min\big\{1,\frac{U+\log\log\log(P)+\log(V)}{\sqrt{\log\log(P)}}\big\}\big)^2+e^{-\frac{\varepsilon\log(q)}{\log(P)}}\big) 
    \end{align*} \label{log_cor_max_moment}
\end{lemma}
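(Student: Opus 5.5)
The plan is to follow the proof of Key Proposition 1 / Lemma 7 in \cite{harper2019partition}, adapted to the two-sided barrier contained in $\widetilde{\mathcal{G}}_{\text{rand}}(h)$. Since the rough sum only involves $f(p)$ with $p>P$, it is independent of $\widetilde{\mathcal{G}}_{\text{rand}}(h)$ and of the smooth sum. So the expectation factorises, and the rough factor is bounded by $\sum_{n \leq q^{1/2-2\varepsilon},\,P^-(n)>P} n^{-2\sigma} \ll \log(q)/\log(P)$, using $|\sigma-1/2|\leq 1/\log(q)$. It therefore suffices to show
\begin{equation*}
\mathbb{E}\big(\mathbf{1}_{\widetilde{\mathcal{G}}_{\text{rand}}(h)}|\textstyle\sum_{m \leq q^\varepsilon,\,P^+(m)\leq P} f(m)m^{-1/2-ih}|^2\big) \ll \log(P)\big(\Delta_1\Delta_2^2 + e^{-\varepsilon\log(q)/\log(P)}\big),
\end{equation*}
where $\Delta_1$ and $\Delta_2$ denote the two minima in the statement.

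\emph{Step 1: replace the truncated smooth sum by a complete Euler product.} Write the smooth sum as the full sum over $P$-smooth $m$ minus its tail $m>q^\varepsilon$. By Rankin's trick, as in Lemma \ref{exp_logn} and Lemma \ref{factored_form}, the tail has second moment $\ll \log(P)e^{-\varepsilon\log(q)/\log(P)}$; dropping the indicator there is harmless. The full sum equals $F_P(1/2+ih)=\prod_{p\leq P}(1-f(p)p^{-1/2-ih})^{-1}$. The barrier concerns $I_{\ell,\text{rand}}(\tilde h(\ell))$ rather than $I_{\ell,\text{rand}}(h)$, so each factor has to be compared at the two points. Since $|h-\tilde h(\ell)|\leq e^{\ell}/(\log(P)(\log\log(P))^3)$, the sum $\sum_{p\in\ell\text{-block}}|p^{-ih}-p^{-i\tilde h(\ell)}|^2/p \ll (\log\log P)^{-6}$ is summable over $\ell$. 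A Gaussian/moment comparison as in \cite{harper2019partition} then lets us replace $h$ by $\tilde h(\ell)$ at cost $O(1)$, absorbed by the $19/3$ slack. The small primes $p\leq P^{e^{-(\log\log P - B)}}$ contribute an independent factor of bounded expected square.

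\emph{Step 2: Girsanov tilt and ballot estimate.} Under the tilted measure with density $|F_P|^2/\mathbb{E}|F_P|^2$, where $\mathbb{E}|F_P|^2\asymp\log(P)$, the increments $\log|I_{\ell,\text{rand}}(\tilde h(\ell))|$ become approximately independent with mean $\approx 1$ and variance $\approx 1/2$. Harper's Lemmas 4 and 5 of \cite{aharper201} make this comparison rigorous. Centering gives a random walk $S_j$ of length $n\asymp\log\log(P)$. The upper barrier reads $S_{\text{partial}} \leq 3\log\log\log(P)+U+O(1)$ for the tail sums, which supplies the ballot factor $a=U+\log\log\log(P)$. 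The lower constraint $\prod|I_\ell|\geq \log(P)/(VB_0e^{19/3})$ pins the endpoint of the walk to the window $[a-b,a]$ with $b = a+\log(V)+O(1)$. Applying the second probability result of Section \ref{prob_heur} with these $a$ and $b$ gives the tilted probability $\ll \Delta_1\Delta_2^2$. Multiplying by $\mathbb{E}|F_P|^2\asymp\log(P)$ yields the main term.

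\emph{Main obstacle.} The hardest step is justifying the Gaussian approximation of the tilted increments uniformly, while the barrier is imposed at the discretised points $\tilde h(\ell)$ rather than at $h$ itself. The same difficulty affects the lower-tail constraint, because the endpoint window uses the unreduced product. I would first try to reproduce Harper's Lemma 5 argument block by block, conditioning on each increment lying in a unit interval. I would then apply a local Gaussian comparison (the Berry--Esseen type estimates in \cite{aharper201}), using the extra $(\log\log P)^3$ refinement in the definition of $\tilde h$ to keep the errors summable. The $v\geq 2$ prime-power terms are bounded and absorbed into the constant $19/3$ and $B_0$.
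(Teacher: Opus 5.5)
Your reduction matches the paper's outline, which is itself Harper's Lemma 7. The rough factor splits off by independence and costs $\ll \log(q)/\log(P)$. Rankin's trick produces the $e^{-\varepsilon\log(q)/\log(P)}$ term. After tilting by $|F_P(1/2+ih)|^2$, the event $\widetilde{\mathcal{G}}_{\text{rand}}(h)$ becomes a ballot event with an endpoint window, and the second probability result applies with $a \asymp U+\log\log\log(P)$ and $b \asymp U+\log\log\log(P)+\log(V)$.

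The gap is in the step you flag as the main obstacle, and the route you sketch through it does not work. The target $\Delta_1\Delta_2^2$ tends to zero; for instance it is $\asymp (\log\log\log(P))^3(\log\log(P))^{-3/2}$ when $U$ and $V$ are bounded. So the Gaussian comparison errors must be small relative to the target, not merely summable over $\ell$.

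Consider the blocks of smallest primes that still lie inside the barrier, just above $P^{e^{-(\lfloor\log\log(P)\rfloor-B)}}$, a quantity bounded in terms of $B$. There, a Berry--Esseen comparison of the tilted increment with a Gaussian has error of order $\sum_p p^{-3/2}$ over the block. This depends only on $B$, does not decay with $P$, and therefore swamps the main term.

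Two of your remarks do not address this. The $(\log\log(P))^3$ refinement in $\tilde{h}$ only controls the discrepancy between $h$ and $\tilde{h}(\ell)$, not this non-Gaussianity. Your observation that small primes give an independent bounded factor only covers primes below the cutoff, which never enter the barrier. Separately, discretising each increment into unit intervals is far too coarse. Errors of order $1$ per step accumulate to order $\log\log(P)$ along the walk, while the barrier sits only $\asymp U+\log\log\log(P)$ above the mean.

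The missing idea is the one the paper records right after the statement. Condition on the values of the Euler products over the small primes inside the barrier range, and use their independence from the increments over larger primes. Given these values, the walk over the larger primes starts from a known point. You then apply the Gaussian comparison, now with errors negligible against the target, and the probability result only to that walk, with $a$ and $b$ shifted by the conditioned value. Finally, integrate over that value using its Gaussian-type tails under the tilt. The paper notes that dropping this conditioning step costs a factor $\ll \log_4(P)$.
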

Unsurprisingly, this reduces to a bounding $\prod_{p \leq P}|1-\frac{f(p)}{p^{1/2+ih}}|^{-2}$ like in Lemma \ref{log_cor_barrier_moment}. Due to the small primes creating issues, a solution is then to condition on the values of Euler products involving the small primes, and then use the fact that these will be independent of the shorter Euler products not involving these small primes. When not handling the small primes, then the Gaussian comparison steps work in the same way as one sees in Lemma 4 in \cite{harper2019partition}, and one proceeds in the same way. If one is willing to permit the gaining of a factor $\ll \log_4(P)$ on the upper bound in Lemma \ref{log_cor_max_moment}, then these steps are unnecessary since we would not bother with conditioning on these small primes. The work in \cite{abr20} takes this idea further, since they completely do away with any kind of barrier condition on the Euler product increments over the small primes as their application of the twisted fourth moment alleviates issues concerning the smooth numbers.  \par
With this in mind, we can prove Proposition \ref{conditional_bound_max}.
\begin{proof}[Proof of Proposition \ref{conditional_bound_max}]
    To start with, we assume that $e^{-U} \leq V \leq e^{\sqrt{\log\log(P)}}$, otherwise we can ignore the indicator of the log-correlated part of the Dirichlet polynomial being large and proceed in the same way we did in Proposition \ref{good_moment_bound} (or we can proceed as we do here just ignoring all the manipulations to handle the size condition on the large Dirichlet polynomial). Expanding $\log(I_{\ell,\chi}(\tilde{h}(\ell))$, we see that $\widetilde{\mathcal{G}}_\chi(h)$ is operating on the quantity
    $$\big|\sum_{p^j:p \leq P}\frac{\chi(p)}{jp^{j(1/2+i\tilde{h}(\ell(p)))}}\big|=\big|\sum_{\ell =j}^{\lfloor\log\log(P)\rfloor-B-1}\log(I_{\ell,\chi}(\tilde{h}(\ell))) +O(1)\big| \leq 4\log\log(P).$$
    The $O(1)$ term is dependent on the large absolute constant $B$ from the probability results. Using another Taylor expansion on the exponential and noting $\frac{\varepsilon\log(q)}{\log(P)} > 20\log\log(P)$ (it is $\gg (\log\log(q))^6$ when we apply this to Theorem \ref{short_range_max}), we have
\begin{align*}
    \sum_{v=0}^{\lfloor\frac{\varepsilon\log(q)}{\log(P)}\rfloor}\frac{1}{(v!)}\big(\sum_{\substack{p^j:p \leq P}}\frac{\chi(p)^j}{jp^{j(1/2+i\tilde{h}(\ell(p))}}\big)^v =& \exp\big(\sum_{\substack{p^j:p \leq P}}\frac{(\chi(p))^j}{jp^{j(1/2+i\tilde{h}(\ell(p)))}}\big) + O\big(\frac{(4\log\log(P))^{\lceil\frac{\varepsilon\log(q)}{\log(P)}\rceil}}{(\left\lceil\frac{\varepsilon\log(q)}{\log(P)}\right\rceil)!}\big) \\
    =&e^{O(1)}\prod_{\ell=0}^{\lfloor\log\log(P)\rfloor-B-1}I_{\ell,\chi}(\tilde{h}(\ell)) + O(1).
\end{align*}
Again, the $e^{O(1)}$ term is dependent on fixed constant $B$ and the remainder term is bounded using the Stirling approximation. Why this matters is because we then can firmly connect $\widetilde{\mathcal{G}}_\chi(h)$ to our factored Dirichlet polynomial. More concretely, we then have there exists a constant $B_0$ dependent on $B$ such that
\begin{align*}
    &\mathbf{1}_{\widetilde{\mathcal{G}}_\chi(h)}(h)\mathbf{1}_{\mathcal{H}_\chi(h;V) \text{ fails}}(h)\big|\sum_{\substack{m \leq q^\varepsilon \\ P^+(m) \leq P}}\frac{\chi(m)}{m^{1/2+ih}}\sum_{\substack{n \leq q^{1/2-2\varepsilon} \\ P^-(n) > P}}\frac{\chi(n)}{n^{\sigma+ih}}\big|^2 \numberthis \label{event_translate}\\
    \leq& \mathbf{1}_{\widetilde{\mathcal{G}}_\chi}(h)\mathbf{1}_{\widetilde{\mathcal{H}}_\chi(h;\frac{V}{B_0}) \text{ fails}}\big|\sum_{\substack{m \leq q^\varepsilon \\ P^+(m)\leq P}}\frac{\chi(m)}{m^{1/2+ih}}\sum_{\substack{n \leq q^{1/2-2\varepsilon} \\ P^-(n) > P}}\frac{\chi(n)}{n^{\sigma+ih}}\big|^2 \\
    +& \big|\sum_{\substack{n \leq q^{1/2-2\varepsilon} \\ P^{-}(n) > P}}\frac{\chi(n)}{n^{\sigma+ih}}\big|^2\big|\sum_{\substack{k \leq q^{\varepsilon} \\ P^+(m) \leq P}}\frac{\chi(m)}{m^{1/2+ih}}-\sum_{v=0}^{\lfloor\frac{\varepsilon\log(q)}{\log(P)}\rfloor}\big(\sum_{p^j:p \leq P}\frac{\chi(p)^j}{jp^{j(1/2+i\tilde{h}(\ell(p))}}\big)^v\big|^2.
\end{align*}
This follows since if $|\sum_{\substack{m \leq q^\varepsilon \\ P^+(m) \leq P}}\frac{\chi(m)}{m^{1/2+ih}}|>\frac{\log(P)}{V}$ but $\prod_{\ell=0}^{\lfloor\log\log(P)\rfloor-B-1}I_{\ell,\chi}(\tilde{h}(\ell)) \leq \frac{\log(P)}{B_0V}$, then from Lemma \ref{partial_exp}, $|\sum_{\substack{m \leq q^\varepsilon \\ P^+(m) \leq P}}\frac{\chi(m)}{m^{1/2+ih}}|$ be comparable to the difference
$$\big|\sum_{\substack{m \leq q^\varepsilon \\ P^+(m) \leq P}}\frac{\chi(m)}{m^{1/2+ih}}-\sum_{v=0}^{\lfloor\frac{\varepsilon\log(q)}{\log(P)}\rfloor}\big(\sum_{p^j:p \leq P}\frac{\chi(p)^j}{jp^{j(1/2+i\tilde{h}(\ell(p)))}}\big)^v\big| \ll \frac{\sqrt{\log(P)}}{\log\log(P)}$$
by Cauchy-Schwarz (and this can't happen with our restriction on $V$). The second term in the inequality of the display (\ref{event_translate}) is $\ll \frac{\log(P)}{(\log\log(P))^2}$ once we sum over the Dirichlet characters modulo $q$. All that is left is the Dirichlet polynomial with barriers $\widetilde{\mathcal{G}}_\chi(h)$ and $\mathcal{H}_\chi(h;V)$ applied. We approximate these by a product of smooth functions provided to us by Lemma \ref{partition_result} in a similar manner to that done in Proposition \ref{good_moment_bound}, where we upper bound $\mathbf{1}_{\widetilde{\mathcal{G}}_\chi(h)}$ by the indicator for the event such that for all $0 \leq j \leq \lfloor\log\log(P)\rfloor-B-1$
$$\big|\sum_{\ell=j}^{\lfloor\log\log(P)\rfloor-B-1}I_{\ell,\chi}(h(\ell))\big|\leq \log\log(P)-j+3\log\log\log(P)+U.$$
The only difference will be that we will need to slightly adapt the parameters required for the $j=0$ case since we must incorporate the stronger lower bound condition. Assuming we satisfy the conditions of Lemma \ref{randomisation}, up to suitable error terms we have that first term after the inequality of equation \ref{event_translate} can be bounded above by
$$\ll (1+\delta)^{\log\log(P)}\mathbb{E}\big((\mathbf{1}_{\widetilde{\mathcal{G}}_{\text{rand}}(h)}(h)+\delta)|\sum_{\substack{m \leq q^\varepsilon \\ P^+(m)\leq P}}\frac{f(m)}{m^{1/2+ih}}|^2|\sum_{\substack{n \leq q^{1/2-2\varepsilon} \\ P^-(n) > P}}\frac{f(n)}{n^{1/2+ih}}|^2\big).$$
In order to achieve Proposition \ref{conditional_bound_max}, we can choose $\delta = \frac{1}{(\log\log(P))^2}$ so the second term in the expectation is bounded by $\ll \frac{\log(q)}{(\log\log(q))^2}$ and $(1+\delta)^{\log\log(P)} \ll 1$. The first term in the expectation is handled by Lemma \ref{log_cor_max_moment}, and all we have to check is that we can apply Lemma \ref{randomisation} with our choice of parameters, which are suitable in line with similar reasoning seen in the proof of Proposition \ref{good_moment_bound} for example.
\end{proof}
\printbibliography[title= References]
\end{document}